\newtheorem{theorem}{Theorem}[section]
\newtheorem{cor}[theorem]{Corollary}
\newtheorem{lem}[theorem]{Lemma}
\newtheorem{prop}[theorem]{Proposition}
\newtheorem{thm}[theorem]{Theorem}
\newtheorem{rem}[theorem]{Remark}
\newtheorem{defn}[theorem]{Definition}
\newcommand{\nc}{\newcommand}
\nc\ol{\overline} \nc\ul{\underline} \nc\wt{\widetilde}
\nc{\z}{\zeta}
\nc{\ZZ}{{\mathbb Z}} \nc{\NN}{{\mathbb N}} \nc{\CC}{{\mathbb C}}
\nc{\QQ}{{\mathbb Q}} \nc{\CP}{{\mathbb {CP}}}
\nc{\A}{{\mathcal A}}  \nc\U{{\mathfrak U}}
\nc{\F}{{\mathcal F}} \nc{\N}{{\mathcal N}} \nc{\Aa}{{\mathcal A}}
\nc{\E}{{\mathcal E}} \nc{\sS}{{\mathbb S}} \nc{\K}{{\mathcal K}}
\nc{\Ll}{{\mathcal L}}
\newcommand\h{\mathfrak h}
\newcommand\aaa{\mathfrak a}
\newcommand{\gl}{\mathfrak{gl}}
\newcommand{\ssl}{\mathfrak{sl}}
\newcommand{\Sym}{\mathrm{Sym}}
\newcommand{\tr}{\mathrm{tr}}
\nc{\iso}{{\stackrel{\sim}{\longrightarrow}}}
\begin{document}

\author{Boris Feigin}
 \address{B.~Feigin:
  National Research University Higher School of Economics, Russian Federation,
  International Laboratory of Representation Theory and Mathematical Physics, Moscow, Russia}
 \email{borfeigin@gmail.com}

\author[Alexander Tsymbaliuk]{Alexander Tsymbaliuk}
 \address{A.~Tsymbaliuk: Simons Center for Geometry and Physics, Stony Brook, NY 11794, USA}
 \email{otsymbaliuk@scgp.stonybrook.edu}

\title[Bethe subalgebras of $U_q(\widehat{\gl}_n)$ via shuffle algebras]
  {Bethe subalgebras of $U_q(\widehat{\gl}_n)$ via shuffle algebras}

\begin{abstract}
  In this article, we construct certain commutative subalgebras of the \emph{big} shuffle algebra of type $A^{(1)}_{n-1}$.
 This can be considered as a generalization of the similar construction for the \emph{small} shuffle algebra, obtained in~\cite{FS}.
 We present a \emph{Bethe algebra} realization of these subalgebras. The latter identifies them with the Bethe subalgebras of $U_q(\widehat{\gl}_n)$.
\end{abstract}

\maketitle

\section*{Introduction}

  \emph{Elliptic} shuffle algebras were first introduced and studied by the first author and A.~Odesskii, see~\cite{FO1,FO2,FO3}.
 In the \emph{loc.cit.}, they were associated with an elliptic curve $\E$ endowed with two automorphisms $\tau_1,\tau_2$.
  A similar class of algebras, depending on two parameters (alternatively $q_1,q_2,q_3$ with $q_1q_2q_3=1$),
 became of interest in the recent years, due to their geometric
 interpretations and different algebraic incarnations (see~\cite{FS,FT,N1,SV} for the related results).
 We will refer to these algebras as the \emph{small} shuffle algebras.
   In this paper, we study the higher-rank generalizations of those algebras,
 which we refer to as the \emph{big} shuffle algebras (of $A^{(1)}_{n-1}$-type).
 These algebras were also recently considered in~\cite{N2}, where they
 were identified with the positive half of the quantum toroidal algebras $\ddot{U}_{q,d}(\ssl_n)$.

  The aim of this paper is to study particular \emph{large} commutative subalgebras of the \emph{big} shuffle
 algebra $S$, similar to the one from~\cite{FS}.
  We also establish a \emph{Bethe algebra} realization of these subalgebras (which seems to be new even for the \emph{small} shuffle algebras).
 In other words, we identify those commutative subalgebras with the standard Bethe subalgebras of the quantum affine algebra $U_q(\widehat{\gl}_n)$,
 which is \emph{horizontally} embedded into the quantum toroidal algebra.


  The aforementioned commutative subalgebras of $S$ admit a one-parameter deformation: the commutative subalgebras
 $\A(s_0,\ldots,s_{n-1};t)\subset  (S^\geq)^\wedge$
 (the algebra $S^\geq$ is a slight enhancement of $S$, see Section 3.3, while $^\wedge$ indicates the completion with respect to the natural $\ZZ$-grading).
  These algebras are closely related to the study of nonlocal integrals of motion for the deformed $W$-algebras $W_{q,t}(\widehat{\ssl}_n)$
 from~\cite{FKSW}, as well as provide a framework for the generalization of the recent
 results from~\cite{FJMM2} to $\ddot{U}_{q,d}(\ssl_n)$.
 This will be elaborated elsewhere.

 This paper is organized as follows:

 $\bullet$
  In Section 1, we recall the definition and key results about the
 quantum toroidal algebra $\ddot{U}_{q,d}(\ssl_n),\ n\geq 3$. We also recall the notion of
 the \emph{small} shuffle algebra $S^{\mathrm{sm}}$ and its commutative subalgebra $\A^{\mathrm{sm}}$,
 and introduce a higher-rank generalization, the \emph{big} shuffle algebra $S$.

 $\bullet$
  In Section 2, we introduce a family of subspaces $\A(s_0,\ldots,s_{n-1})\subset S$ depending on $n$ parameters
 and  generalizing the construction of $\A^{\mathrm{sm}}\subset S^{\mathrm{sm}}$.
  If $(\frac{1}{s_1\ldots s_{n-1}},s_1,\ldots,s_{n-1})$ is \emph{generic} (see Section 2.2),
 then we prove that $\A(\frac{1}{s_1\ldots s_{n-1}},s_1,\ldots,s_{n-1})$ is a polynomial algebra on
 explicitly given generators; in particular, it is a commutative subalgebra of $S$.

 $\bullet$
  In Section 3, we use the universal $R$-matrix and vertex-type
 representations to establish an alternative viewpoint toward $\A(s_0,\ldots,s_{n-1})$.
 This allows us to identify them with the well-known Bethe subalgebras of the quantum affine $U_q(\widehat{\gl}_n)$,
 horizontally embedded into $\ddot{U}_{q,d}(\ssl_n)$.

 $\bullet$
  In Section 4, we discuss generalizations of the results from Sections 1-3 to the cases $n=1,2$.


\subsection*{Acknowledgments}
$\ $

  We are grateful to A.~Negut and J.~Shiraishi for stimulating discussions.
 We are indebted to B.~Enriquez for useful comments on the first
 version of the paper, which led to a better exposition of the material.
  A.~T. is grateful to P.~Etingof and H.~Nakajima for their interest and support.
  A.~T. thanks the Research Institute for Mathematical Sciences
 (Kyoto) and the Japan Society for the Promotion of Science for
 support during the main stage of the project.
  A.~T. also gratefully acknowledges support from the Simons Center for Geometry and Physics, Stony Brook University,
 at which some of the research for this paper was performed.

  The work of A.~T. was partially supported by the NSF Grant DMS-1502497.
  B.~F. gratefully acknowledges the financial support of a subsidy granted to the HSE by the Government
 of the Russian Federation for the implementation of the Global Competitiveness Program.


\section{Basic definitions and constructions}


\subsection{Quantum toroidal algebras of $\ssl_n$ for $n\geq 3$}
 $\ $

  Let $q,d\in \CC^*$ be two parameters. We set $[n]:=\{0,1,\ldots,n-1\},\ [n]^\times:=[n]\backslash \{0\}$,
 the former viewed as a set of mod $n$ residues.
 Let $g_m(z):=\frac{q^mz-1}{z-q^m}$. Define $\{a_{i,j},m_{i,j}|i,j\in [n]\}$ by
  $$a_{i,i}=2,\ a_{i,i\pm 1}=-1,\ m_{i,i\pm1}=\mp 1,\ \mathrm{and}\ a_{i,j}=m_{i,j}=0\ \mathrm{otherwise}.$$
  The quantum toroidal algebra of $\ssl_n$, denoted by $\ddot{U}_{q,d}(\ssl_n)$,
 is the unital associative algebra generated by
   $\{e_{i,k}, f_{i,k}, \psi_{i,k}, \psi_{i,0}^{-1}, \gamma^{\pm 1/2}, q^{\pm d_1}, q^{\pm d_2}\}_{i\in [n]}^{k\in \ZZ}$
 with the following defining relations:\footnote{\   Our notation are consistent with that of~\cite{VV}, but following~\cite{S} we add
 the elements $q^{\pm d_1}, q^{\pm d_2}$ satisfying~(\ref{T0.3}, \ref{T0.4}). This update is essential for
 our discussion of the Drinfeld double and the universal $R$-matrix.
}
\begin{equation}\tag{T0.1} \label{T0.1}
  [\psi_i^\pm(z),\psi_j^\pm(w)]=0,\ \gamma^{\pm 1/2}-\mathrm{central},
\end{equation}
\begin{equation}\tag{T0.2} \label{T0.2}
 \psi_{i,0}^{\pm 1}\cdot \psi_{i,0}^{\mp 1}=\gamma^{\pm 1/2}\cdot \gamma^{\mp 1/2}=q^{\pm d_1}\cdot q^{\mp d_1}= q^{\pm d_2}\cdot q^{\mp d_2}=1,
\end{equation}
\begin{equation}\tag{T0.3}\label{T0.3}
 q^{d_1}e_i(z)q^{-d_1}=e_i(qz),\ q^{d_1}f_i(z)q^{-d_1}=f_i(qz),\ q^{d_1}\psi^\pm_i(z)q^{-d_1}=\psi^\pm_i(qz),
\end{equation}
\begin{equation}\tag{T0.4}\label{T0.4}
 q^{d_2}e_i(z)q^{-d_2}=qe_i(z),\ q^{d_2}f_i(z)q^{-d_2}=q^{-1}f_i(z),\ q^{d_2}\psi^\pm_i(z)q^{-d_2}=\psi^\pm_i(z),
\end{equation}
\begin{equation}\tag{T1} \label{T1}
  g_{a_{i,j}}(\gamma^{-1}d^{m_{i,j}}z/w)\psi_i^+(z)\psi_j^-(w)=g_{a_{i,j}}(\gamma d^{m_{i,j}}z/w)\psi_j^-(w)\psi_i^+(z),
\end{equation}
\begin{equation}\tag{T2} \label{T2}
  e_i(z)e_j(w)=g_{a_{i,j}}(d^{m_{i,j}}z/w)e_j(w)e_i(z),
\end{equation}
\begin{equation}\tag{T3}\label{T3}
  f_i(z)f_j(w)=g_{a_{i,j}}(d^{m_{i,j}}z/w)^{-1}f_j(w)f_i(z),
\end{equation}
\begin{equation}\tag{T4}\label{T4}
  (q-q^{-1})[e_i(z),f_j(w)]=\delta_{i,j}\left(\delta(\gamma w/z)\psi_i^+(\gamma^{1/2}w)-\delta(\gamma z/w)\psi_i^-(\gamma^{1/2}z)\right),
\end{equation}
\begin{equation}\tag{T5}\label{T5}
  \psi_i^\pm(z)e_j(w)=g_{a_{i,j}}(\gamma^{\pm 1/2}d^{m_{i,j}}z/w)e_j(w)\psi_i^\pm(z),
\end{equation}
\begin{equation}\tag{T6}\label{T6}
  \psi_i^{\pm}(z)f_j(w)=g_{a_{i,j}}(\gamma^{\mp 1/2}d^{m_{i,j}}z/w)^{-1}f_j(w)\psi_i^\pm(z),
\end{equation}
\begin{equation}\tag{T7.1}\label{T7.1}
  \underset{z_1,z_2}\Sym\ [e_i(z_1),[e_i(z_2),e_{i\pm  1}(w)]_q]_{q^{-1}}=0,\
  [e_i(z),e_j(w)]=0\ \mathrm{for}\ j\ne i,i\pm 1,
\end{equation}
\begin{equation}\tag{T7.2}\label{T7.2}
  \underset{z_1,z_2}\Sym\ [f_i(z_1),[f_i(z_2),f_{i\pm  1}(w)]_q]_{q^{-1}}=0,\
  [f_i(z),f_j(w)]=0\ \mathrm{for}\ j\ne i,i\pm 1,
\end{equation}
 where we set $[a,b]_x:=ab-x\cdot ba$ and define the generating series as follows:
  $$e_i(z):=\sum_{k=-\infty}^{\infty}{e_{i,k}z^{-k}},
    f_i(z):=\sum_{k=-\infty}^{\infty}{f_{i,k}z^{-k}},
    \psi_i^{\pm}(z):=\psi_{i,0}^{\pm 1}+\sum_{r>0}{\psi_{i,\pm r}z^{\mp r}},
    \delta(z):=\sum_{k=-\infty}^{\infty}{z^k}.$$

  It will be convenient to use the generators $\{h_{i,k}\}_{k\ne 0}$ instead of  $\{\psi_{i,k}\}_{k\ne 0}$, defined by
  $$\exp\left(\pm(q-q^{-1})\sum_{r>0}h_{i,\pm r}z^{\mp r}\right)=\bar{\psi}_i^\pm(z):=\psi_{i,0}^{\mp 1}\psi^\pm_i(z),\ \
     h_{i,\pm r}\in \CC[\psi_{i,0}^{\mp 1},\psi_{i,\pm 1},\psi_{i,\pm2}, \ldots].$$
 Then the relations (T5,T6) are equivalent to the following (we use  $[m]_q:=(q^m-q^{-m})/(q-q^{-1})$):
\begin{equation}\tag{T5$'$}\label{T5'}
  \psi_{i,0}e_{j,l}=q^{a_{i,j}}e_{j,l}\psi_{i,0},\ [h_{i, k}, e_{j,l}]=d^{-km_{i,j}}\gamma^{-|k|/2}\frac{[ka_{i,j}]_q}{k}e_{j,l+k}\ (k\ne 0),
\end{equation}
\begin{equation}\tag{T6$'$}\label{T6'}
  \psi_{i,0}f_{j,l}=q^{-a_{i,j}}f_{j,l}\psi_{i,0},\ [h_{i, k},  f_{j,l}]=-d^{-km_{i,j}}\gamma^{|k|/2}\frac{[ka_{i,j}]_q}{k}f_{j,l+k}\ (k\ne 0).
\end{equation}
  We also introduce $h_{i,0},c,c'$ via $\psi_{i,0}=q^{h_{i,0}}, \gamma^{1/2}=q^c, c'=\sum_{i\in [n]} h_{i,0}$,
 so that $c,c'$ are central.

  Let $\ddot{U}^-$ and $\ddot{U}^+$ be the subalgebras of
 $\ddot{U}_{q,d}(\ssl_n)$ generated by $\{e_{i,k}\}_{i\in [n]}^{k\in \ZZ}$ and $\{f_{i,k}\}_{i\in [n]}^{k\in \ZZ}$, respectively,
 while $\ddot{U}^0$ is generated by $\{\psi_{i,k},\psi_{i,0}^{-1},\gamma^{\pm 1/2}, q^{\pm d_1}, q^{\pm d_2}\}_{i\in [n]}^{k\in \ZZ}$.

\begin{prop}\cite{H}(Triangular decomposition)\label{triangular_decomposition}
  The multiplication map
   $$m:\ddot{U}^-\otimes \ddot{U}^0\otimes \ddot{U}^+\to \ddot{U}_{q,d}(\ssl_n)$$
 is an isomorphism of vector spaces.
\end{prop}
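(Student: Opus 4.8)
The plan is to establish the triangular decomposition by the standard PBW-type argument, combining a spanning statement with a linear-independence statement coming from a faithful representation. First I would show that the multiplication map $m$ is surjective. Since $\ddot{U}_{q,d}(\ssl_n)$ is generated by all the $e_{i,k}$, $f_{i,k}$, $\psi_{i,k}^{\pm}$ and the invertible grouplike elements, it suffices to show that any monomial in these generators can be rewritten, modulo lower terms in a suitable ordering, as an element of $\ddot{U}^-\cdot\ddot{U}^0\cdot\ddot{U}^+$. The key input is relation (T4), which lets one move an $f$ past an $e$ at the cost of a term lying entirely in $\ddot{U}^0$ (the $\delta$-function terms involving $\psi_i^\pm$), together with (T5), (T6) which move the Cartan-type elements $\psi_i^\pm(z)$ past the $e_j(w)$ and $f_j(w)$ up to scalar rational factors, keeping them inside $\ddot{U}^+$ and $\ddot{U}^-$ respectively. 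One sets up a filtration by the number of $f$-generators appearing (or by a monomial order refining it) and performs a straightening/induction argument; the Serre relations (T7.1), (T7.2) are not needed for surjectivity but only constrain the sizes of $\ddot{U}^{\pm}$.

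Next I would prove injectivity of $m$, equivalently that the natural multiplication map induces an isomorphism onto $\ddot{U}_{q,d}(\ssl_n)$ rather than merely a surjection. The cleanest route is to exhibit a module — for instance a Verma-type or Fock-type module, or more robustly the tensor product of vertex/vector representations used later in Section 3 — on which $\ddot{U}^-\otimes\ddot{U}^0\otimes\ddot{U}^+$ acts with the three tensor factors acting ``freely enough'' that a nontrivial relation in the image would force a nontrivial relation among the tensor factors. Concretely, one takes a highest-weight-type module $M$ with cyclic vector $v$ annihilated by $\ddot{U}^+$ and on which $\ddot{U}^0$ acts by an invertible character-like action, so that $u^- \cdot u^0 \cdot v$ ($u^-\in\ddot{U}^-$) are linearly independent whenever the $u^-$ are; combined with an analogous lowest-weight module this separates the three factors. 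Since this is the same argument as for affine quantum groups and for $\ddot{U}_{q,d}(\ssl_n)$ itself, I would simply cite~\cite{H}, where the decomposition is proven, and indicate that the surjectivity half is an elementary consequence of the relations (T4), (T5), (T6) above.

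The main obstacle is the injectivity part: unlike surjectivity, it cannot be read off the relations directly and genuinely requires producing enough representations to separate words in $\ddot{U}^-$, $\ddot{U}^0$, $\ddot{U}^+$. For $\ddot{U}_{q,d}(\ssl_n)$ this was carried out by Hernandez, so in the write-up I would treat Proposition~\ref{triangular_decomposition} as recalled rather than reproved, noting only that the enhancement by $q^{\pm d_1}, q^{\pm d_2}$ is harmless: these elements lie in $\ddot{U}^0$, act diagonalizably by (T0.3), (T0.4), and commute past everything by rescaling $z$, so adjoining them changes neither the statement nor the proof. Thus the proof reduces to (i) the straightening argument for surjectivity sketched above and (ii) citing~\cite{H} for the linear independence, with the remark on $q^{\pm d_i}$ bridging the two setups.
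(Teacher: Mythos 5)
Your proposal is correct and matches the paper's treatment: the paper does not prove this proposition but simply recalls it from~\cite{H}, exactly as you ultimately do for the essential linear-independence half. Your added sketch of surjectivity via the straightening relations (T4)--(T6) and your observation that adjoining $q^{\pm d_1}, q^{\pm d_2}$ to $\ddot{U}^0$ is harmless are both sound and consistent with the paper's setup.
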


  We equip the algebra $\ddot{U}_{q,d}(\ssl_n)$ with the $\ZZ^{[n]}\times \ZZ$-grading by assigning
 $$\deg(e_{i,k}):=(1_i;k),\ \deg(f_{i,k}):=(-1_i;k),\ \deg(\psi_{i,k}):=(0;k),$$
 $$\deg(x):=(0;0)\ \mathrm{for}\ x=\psi_{i,0}^{-1}, \gamma^{\pm 1/2}, q^{\pm d_1}, q^{\pm d_2}\ \ \forall\ i\in [n], k\in \ZZ,$$
  where $1_j\in \ZZ^{[n]}$ is the vector with the $j$th coordinate $1$ and all other coordinates being zero.


\subsection{Horizontal and vertical $U_q(\widehat{\gl}_n)$}
$\ $

  Following~\cite{VV}, we introduce the \emph{vertical} and \emph{horizontal} copies of the
 quantum affine algebra of $\ssl_n$, denoted by $U_q(\widehat{\ssl}_n)$, inside $\ddot{U}_{q,d}(\ssl_n)$.
  Consider the subalgebra $\dot{U}^{\mathrm{v}}(\ssl_n)$ of $\ddot{U}_{q,d}(\ssl_n)$
 generated by $\{e_{i,k},f_{i,k},\psi_{i,k},\psi_{i,0}^{-1}, \gamma^{\pm 1/2}, q^{\pm d_1}|i\in [n]^\times,  k\in \ZZ\}$.
  This algebra is isomorphic to $U_q(\widehat{\ssl}_n)$, realized via the ``new Drinfeld presentation''.
  Let $\dot{U}^{\mathrm{h}}(\ssl_n)$ be the subalgebra of $\ddot{U}_{q,d}(\ssl_n)$
 generated by $\{e_{i,0},f_{i,0},\psi_{i,0}^{\pm 1}, q^{\pm d_2}|i\in [n]\}$.
 This algebra is also isomorphic to $U_q(\widehat{\ssl}_n)$, realized via the classical Drinfeld--Jimbo presentation.

  Following~\cite{FJMM1}, we recall a slight upgrade of this construction, which provides two copies of the quantum affine algebra of $\gl_n$,
 rather than $\ssl_n$, inside $\ddot{U}_{q,d}(\ssl_n)$.
  For every $r\ne 0$, choose $\{c_{i,r}|i\in [n]\}$ to be a nontrivial solution of the following system of linear equations:\footnote{\ It is easy to see that the
  space of solutions of this system is $1$-dimensional if $q$ is not a root of unity.}
   $$\sum_{i=0}^{n-1}c_{i,r}d^{-rm_{i,j}}[ra_{i,j}]_q =0,\ j\in [n]^\times.$$
 Let $\h^{\mathrm{v}}$ be the subspace of $\ddot{U}_{q,d}(\ssl_n)$ spanned by
\[
  h_r^{\mathrm{v}}=
   \begin{cases}
    \sum_{i\in [n]}c_{i,r}h_{i,r} & \text{if } r\ne 0 \\
    \gamma^{1/2} & \text{if } r=0
   \end{cases}.
\]
  Note that $\h^{\mathrm{v}}$ is well defined and commutes with $\dot{U}^{\mathrm{v}}(\ssl_n)$, due to (T5$'$, T6$'$).
 Moreover, $\h^{\mathrm{v}}$ is isomorphic to the Heisenberg Lie algebra.
  Let $\dot{U}^{\mathrm{v}}(\gl_n)$ be the subalgebra of $\ddot{U}_{q,d}(\ssl_n)$ generated by $\dot{U}^{\mathrm{v}}(\ssl_n)$ and $\h^{\mathrm{v}}$.
 The above discussions imply that $\dot{U}^{\mathrm{v}}(\gl_n)\simeq U_q(\widehat{\gl}_n)$, the quantum affine algebra of $\gl_n$.
 We let $\dot{U}^{\mathrm{v}}(\gl_1)\subset \dot{U}^{\mathrm{v}}(\gl_n)$ be the subalgebra generated by $\h^{\mathrm{v}}$.

  Our next goal is to provide a \emph{horizontal} copy of $U_q(\widehat{\gl}_n)$, containing $\dot{U}^{\mathrm{h}}(\ssl_n)$, inside $\ddot{U}_{q,d}(\ssl_n)$.
 The following approach was proposed in~\cite{FJMM1}, and it is based on a beautiful result of Miki:

\begin{thm}\cite{M}
  There exists an automorphism $\pi$ of $\ddot{U}_{q,d}(\ssl_n)$ such that
 $$\pi(\dot{U}^{\mathrm{v}}(\ssl_n))=\dot{U}^{\mathrm{h}}(\ssl_n),\ \pi(\dot{U}^{\mathrm{h}}(\ssl_n))=\dot{U}^{\mathrm{v}}(\ssl_n).$$
  Moreover:
 $$\pi(q^c)=q^{c'},\ \pi(q^{c'})=q^{-c}.$$
\end{thm}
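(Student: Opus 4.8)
This is Miki's theorem; the plan I would follow realizes $\pi$ not by guessing its values on the Drinfeld generators, but as one distinguished element of a large group of automorphisms of $\ddot{U}_{q,d}(\ssl_n)$. The technical core is to construct an action on the whole toroidal algebra of the group $\mathcal{T}$ generated by: (i) the braid-group symmetries $T_0,\dots,T_{n-1}$ of the \emph{horizontal} copy $\dot{U}^{\mathrm{h}}(\ssl_n)$ (in its Drinfeld--Jimbo realization) --- a priori defined only on that subalgebra, they must be promoted to automorphisms of the full $\ddot{U}_{q,d}(\ssl_n)$; (ii) the rotation $\rho$ induced by the diagram automorphism $i\mapsto i+1\pmod n$ of $A^{(1)}_{n-1}$; and (iii) the grading automorphisms obtained by conjugation with $q^{\pm d_1}$ and $q^{\pm d_2}$. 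One then checks that these satisfy relations exhibiting $\mathcal{T}$ as (a central extension of) $\mathrm{SL}_2(\ZZ)$ acting on the rank-two lattice of ``loop directions'' of the toroidal algebra, whose two coordinate axes are the vertical and horizontal loop gradings. Finally, one \emph{defines} $\pi$ to be the automorphism attached to the order-four element $S\in\mathrm{SL}_2(\ZZ)$ which exchanges the two axes up to sign --- concretely a suitable product $\rho^{a}\,T_{i_1}\cdots T_{i_\ell}\,(q^{d_1}q^{d_2})^{b}$. Being a composition of automorphisms, $\pi$ is automatically an automorphism, with inverse attached to $S^{-1}$.

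Granting the $\mathcal{T}$-action, the three assertions become bookkeeping. Since $S$ exchanges the two coordinate axes (up to sign), $\pi$ exchanges the vertical and the horizontal loop directions, hence the subalgebras generated along them: $\pi(\dot{U}^{\mathrm{v}}(\ssl_n))=\dot{U}^{\mathrm{h}}(\ssl_n)$ and $\pi(\dot{U}^{\mathrm{h}}(\ssl_n))=\dot{U}^{\mathrm{v}}(\ssl_n)$. To see this explicitly one evaluates $\pi$ on the Drinfeld--Jimbo generators of the two affine subalgebras, which jointly generate $\ddot{U}_{q,d}(\ssl_n)$. For the central elements, $q^c=\gamma^{1/2}$ is the canonical central element of $\dot{U}^{\mathrm{v}}(\ssl_n)$, while $q^{c'}=\prod_{i\in[n]}q^{h_{i,0}}$ is --- since all marks of $A^{(1)}_{n-1}$ equal $1$ --- the canonical central element of $\dot{U}^{\mathrm{h}}(\ssl_n)$; tracking the $S$-action on the pair of level coordinates dual to the two loop directions then yields exactly $\pi(q^c)=q^{c'}$ and $\pi(q^{c'})=q^{-c}$, consistently with $S^2=-I$ (so that $\pi^2$ inverts both central elements).

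The one genuinely hard step is (i): proving that $T_0,\dots,T_{n-1}$, evident on $\dot{U}^{\mathrm{h}}(\ssl_n)$, extend to automorphisms of the full $\ddot{U}_{q,d}(\ssl_n)$. Concretely, this requires explicit formulas for $T_j(e_{i,k})$, $T_j(f_{i,k})$, $T_j(\psi_{i,k})$ and then a verification of all of (T1)--(T7.2); the Serre relations (T7.1)--(T7.2) and the $d$-twisted quadratic relations (T2)--(T3) are where the check is most delicate. It is convenient to reduce the bulk of it to the rank-one and rank-two sub-configurations of the cyclic quiver, and/or to verify the surviving relations on a sufficiently large family of Fock-type vertex-operator modules. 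Once the $\mathcal{T}$-action is in hand, identifying $\pi$ with $S$ and reading off its effect on $\dot{U}^{\mathrm{v}}(\ssl_n)$, $\dot{U}^{\mathrm{h}}(\ssl_n)$, $q^c$, $q^{c'}$ is routine.
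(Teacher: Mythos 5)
This theorem is not proved in the paper at all: it is imported verbatim from Miki's paper \cite{M}, so there is no internal argument to compare against. What you have written is, in outline, precisely the strategy of the cited source (whose title is literally ``Toroidal braid group action and an automorphism of toroidal algebra''): build an action of a double-affine/toroidal braid group with its $\mathrm{SL}_2(\ZZ)$ symmetry on $\ddot{U}_{q,d}(\ssl_n)$, and define $\pi$ as the automorphism attached to the element $S$ exchanging the two loop directions. Your bookkeeping for the central elements is also consistent: from $\pi(q^c)=q^{c'}$ and $\pi(q^{c'})=q^{-c}$ one gets $\pi^2(q^{\pm c})=q^{\mp c}$, $\pi^2(q^{\pm c'})=q^{\mp c'}$, matching $S^2=-I$, and the identification of $q^{c'}=\prod_{i\in[n]}q^{h_{i,0}}$ as the canonical central element of $\dot{U}^{\mathrm{h}}(\ssl_n)$ is correct since all marks of $A^{(1)}_{n-1}$ equal $1$.

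That said, as a proof the proposal has a genuine gap, which you yourself flag: step (i), the promotion of the Lusztig--Saito braid operators $T_0,\dots,T_{n-1}$ from $\dot{U}^{\mathrm{h}}(\ssl_n)$ to automorphisms of all of $\ddot{U}_{q,d}(\ssl_n)$, together with the verification of the $\mathrm{SL}_2(\ZZ)$-type relations in $\mathcal{T}$, is the entire content of Miki's theorem, and it is only named here, not carried out. Two further points would need attention if you executed it. First, the suggestion to ``verify the surviving relations on a sufficiently large family of Fock-type vertex-operator modules'' only yields the relations in the quotient acting on those modules; you would separately need (joint) faithfulness of that family, which is itself a nontrivial statement about $\ddot{U}_{q,d}(\ssl_n)$ --- the honest route is the direct algebraic check of (T1)--(T7.2), with the rank-two subconfigurations and the $d$-twists $d^{m_{i,j}}$ being the delicate part (note that the two-parameter $(q,d)$ version used here is the one stated in \cite{FJMM1}, slightly more general than Miki's original $d=1$ setting). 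Second, the presentation in this paper includes the degree generators $q^{\pm d_1},q^{\pm d_2}$ satisfying (T0.3)--(T0.4); an automorphism exchanging the vertical and horizontal directions must in particular specify $\pi(q^{d_1})$ and $\pi(q^{d_2})$ compatibly with $\pi(q^c)=q^{c'}$, $\pi(q^{c'})=q^{-c}$, and this extension of $\pi$ to the degree operators is an extra (if mild) verification not covered by the braid-group formalism on the Drinfeld generators alone.
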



  Let us define $\h^{\mathrm{h}}:=\pi(\h^{\mathrm{v}})$ and let $\dot{U}^{\mathrm{h}}(\gl_n)$ be the subalgebra of $\ddot{U}_{q,d}(\ssl_n)$
 generated by $\dot{U}^{\mathrm{h}}(\ssl_n)$ and $\h^{\mathrm{h}}$. Then $\dot{U}^{\mathrm{h}}(\gl_n)=\pi(\dot{U}^{\mathrm{v}}(\gl_n))$
 and it is isomorphic to $U_q(\widehat{\gl}_n)$. We also define
  $\dot{U}^{\mathrm{h}}(\gl_1)\subset  \dot{U}^{\mathrm{h}}(\gl_n)$ as the subalgebra generated by $\h^{\mathrm{h}}$.

  However, this construction is not very enlightening, as the images $\pi(h^{\mathrm{v}}_r)$ are hardly computable.
 An alternative approach, based on the RTT realization of $U_q(\widehat{\gl}_n)$, was proposed in~\cite{N2}.
 We will discuss the related results in
 Section~\ref{section_horizontal_heisenberg}.


\subsection{Hopf pairing, Drinfeld double and a universal $R$-matrix}
$\ $

   We recall the general notion of a Hopf pairing, following~\cite[Chapter 3]{KRT}.
  Given two Hopf algebras $A$ and $B$ with invertible antipodes $S_A$ and $S_B$,
  the bilinear map
    $$\varphi:A \times B\to \CC$$
  is called a \emph{Hopf pairing} if it satisfies the following properties:
\begin{equation*}
  \varphi(a,bb')=\varphi(a_1,b)\varphi(a_2,b')\ \ \ \forall\ a\in A,\ b,b'\in B ,
\end{equation*}
\begin{equation*}
  \varphi(aa',b)=\varphi(a,b_2)\varphi(a',b_1)\ \ \ \forall\ a,a'\in A,\ b\in B,
\end{equation*}
\begin{equation*}
  \varphi(a,1_B)=\epsilon_A(a)\ \mathrm{and}\  \varphi(1_A,b)=\epsilon_B(b)\ \ \ \forall\ a\in A,\ b\in B,
\end{equation*}
\begin{equation*}
  \varphi(S_A(a),b)=\varphi(a,S_B^{-1}(b))\ \ \ \forall\ a\in A,\ b\in B,
\end{equation*}
 where we use the Sweedler notation for the coproduct:
  $$\Delta(x)=x_1\otimes x_2.$$

 For such a data, one can define the \emph{generalized Drinfeld double} $D_\varphi(A,B)$ as follows:

\begin{thm}\cite[Theorem 3.2]{KRT}\label{generalized Drinfeld double}
    There is a unique Hopf algebra $D_\varphi(A,B)$ satisfying the following properties:

\medskip
 (i) As coalgebras $D_\varphi(A,B)\simeq A\otimes B$.

\medskip
 (ii) Under the natural inclusions
       $$A\hookrightarrow D_\varphi(A,B)\ \mathrm{given\ by}\ a\mapsto a\otimes 1_B,$$
       $$B\hookrightarrow D_\varphi(A,B)\ \mathrm{given\ by}\ b\mapsto 1_A\otimes b,$$
      $A$ and $B$ are Hopf subalgebras of $D_\varphi(A,B)$.

\medskip
  (iii) For any $a\in A, b\in B$, we have
         $$(a\otimes 1_B)\cdot (1_A\otimes b)=a\otimes b$$
        and
         $$(1_A\otimes b)\cdot (a\otimes 1_B)=\varphi(S_A^{-1}(a_1),b_1)\varphi(a_3,b_3)a_2\otimes b_2.$$
\end{thm}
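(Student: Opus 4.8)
The plan is to follow the standard reconstruction of the Drinfeld double from a Hopf pairing, as in \cite[Chapter 3]{KRT}, adapting the bookkeeping to our conventions. First I would define the underlying coalgebra structure on the tensor product $A\otimes B$: the comultiplication $\Delta_{D}$ is the tensor product of $\Delta_A$ and $\Delta_B$ composed with the flip on the two middle factors, and the counit is $\epsilon_A\otimes\epsilon_B$. Properties (i) and the coalgebra parts of (ii) are then immediate. The multiplication is forced by requiring that $A$ and $B$ be subalgebras and by the two formulas in (iii); concretely one sets
\[
  (a\otimes b)\cdot(a'\otimes b')=\varphi(S_A^{-1}(a'_1),b_1)\,\varphi(a'_3,b_3)\,(aa'_2)\otimes(b_2b').
\]
The bulk of the argument is to verify that this product is associative and unital, that $\Delta_D$ is an algebra map for it, and that the resulting bialgebra has an antipode. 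For uniqueness, one observes that (ii) and (iii) determine the product on all of $A\otimes B$ once it is known on the generating subalgebras, so any Hopf algebra with properties (i)--(iii) must coincide with the one just constructed.

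The key steps, in order, are: (1) write down $\Delta_D$, $\epsilon_D$ and check coassociativity — routine since it is built from those of $A$ and $B$; (2) write down the multiplication $\mu_D$ as above and check unitality with unit $1_A\otimes 1_B$, using the counit axioms for $\varphi$; (3) check associativity of $\mu_D$ — this is where the two bimultiplicativity axioms for $\varphi$ and the compatibility $\varphi(S_A(a),b)=\varphi(a,S_B^{-1}(b))$ enter, together with coassociativity of $\Delta_A$ and $\Delta_B$ to handle the Sweedler indices; (4) verify that $\Delta_D$ is multiplicative, i.e.\ $\Delta_D\circ\mu_D=(\mu_D\otimes\mu_D)\circ(\mathrm{id}\otimes\tau\otimes\mathrm{id})\circ(\Delta_D\otimes\Delta_D)$, again a Sweedler-index computation invoking the bimultiplicativity of $\varphi$; (5) produce the antipode $S_D$ — the natural guess is $S_D(a\otimes b)=(1_A\otimes S_B(b))\cdot(S_A(a)\otimes 1_B)$, and one checks the antipode axiom using that $S_A,S_B$ are invertible (this is exactly where invertibility of the antipodes is needed). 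Finally (6) extract uniqueness as explained above: expand an arbitrary element as a sum of products $(a\otimes 1_B)(1_A\otimes b)$ via (iii), and note the opposite-order product is pinned down by the second formula in (iii), so the multiplication — and hence the whole Hopf structure, the coalgebra being fixed by (i) — is determined.

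The main obstacle is step (3), the associativity check: one must compare $((a\otimes b)(a'\otimes b'))(a''\otimes b'')$ with $(a\otimes b)((a'\otimes b')(a''\otimes b''))$, and after expanding both sides each produces a product of several $\varphi$-factors with many Sweedler components of $a',a'',b,b'$. Reconciling the two requires carefully applying $\varphi(a,bb')=\varphi(a_1,b)\varphi(a_2,b')$ and $\varphi(aa',b)=\varphi(a,b_2)\varphi(a',b_1)$ to merge and split factors, and crucially using $\varphi(S_A^{-1}(x),y)$ in tandem with the relation $S_A^{-1}(x_1)x_2=\epsilon_A(x)1_A$ to collapse the spurious factors; keeping the indices straight is the only real work. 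Steps (4) and (5) are of the same flavor but shorter. Since all of this is carried out in detail in \cite[Theorem 3.2]{KRT}, I would cite that reference for the verifications and only indicate the construction and the uniqueness argument explicitly, which is what the statement attributes to \emph{loc.\ cit.}
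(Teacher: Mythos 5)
Your proposal is correct and matches the standard construction: the paper itself gives no proof of this statement, citing it directly as \cite[Theorem 3.2]{KRT}, and your sketch (tensor-product coalgebra, cross-product multiplication forced by (iii), associativity via the bimultiplicativity axioms for $\varphi$, antipode $S_D(a\otimes b)=(1_A\otimes S_B(b))\cdot(S_A(a)\otimes 1_B)$, uniqueness by expanding products $(a\otimes 1_B)(1_A\otimes b)$) is exactly the argument carried out in that reference. Your derived multiplication formula is consistent with the straightening rule in (iii), so there is nothing to add.
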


\begin{rem}
 The notion of the \emph{Drinfeld double} is reserved for the case $B=A^{\star,\mathrm{cop}}$ with $\varphi$ being the natural pairing.
\end{rem}

   A Hopf algebra $A$ is \emph{quasitriangular} (\emph{formally quasitriangular}) if there is an invertible element
    $$R\in A\otimes A\ (\mathrm{or}\ R\in A\widehat{\otimes}A)$$
  satisfying the following properties:
  $$R\Delta(x)=\Delta^{\mathrm{op}}(x)R \ \ \forall\ x\in A,$$
  $$(\Delta\otimes \mathrm{Id})(R)=R^{13}R^{23},$$
  $$(\mathrm{Id}\otimes \Delta)(R)=R^{13}R^{12}.$$
  Such an element $R$ is called a \emph{universal $R$-matrix} of $A$.

  The fundamental property of Drinfeld doubles is their quasitriangularity:

\begin{thm}\cite[Theorem 3.2]{KRT}
  For a nondegenerate Hopf pairing $\varphi:A\times B\to \CC$, the
 generalized Drinfeld double $D_\varphi(A,B)$ is formally quasitriangular with
 the universal $R$-matrix
  $$R=\sum_{i} e_i\otimes e_i^*,$$
 where $\{e_i\}$ is a basis of $A$ and $\{e_i^*\}$ is the dual basis of $B$ (with respect to $\varphi$).
\end{thm}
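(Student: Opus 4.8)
The plan is to verify directly the three defining properties of a universal $R$-matrix, using the explicit description of $D_\varphi(A,B)$ from Theorem~\ref{generalized Drinfeld double}. First I would record the two \emph{reproducing identities} for the canonical element, valid because $\varphi$ is nondegenerate: for $a\in A$ and $b\in B$,
\begin{equation*}
  \sum_i \varphi(a,e_i^*)\,e_i=a,\qquad \sum_i \varphi(e_i,b)\,e_i^*=b .
\end{equation*}
In all applications below $\varphi$ restricts to a perfect pairing between the finite-dimensional homogeneous components of compatible gradings on $A$ and $B$, so one may take $\{e_i\},\{e_i^*\}$ homogeneous, and $R=\sum_i e_i\otimes e_i^*$ is then a well-defined element of the completed tensor product $A\widehat{\otimes}B\subset D_\varphi(A,B)\widehat{\otimes}D_\varphi(A,B)$, the sum being finite in each bidegree. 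Throughout I would regard $e_i\in A$ and $e_i^*\in B$ inside $D_\varphi(A,B)$ via Theorem~\ref{generalized Drinfeld double}(ii), and use $(a\otimes 1_B)\cdot(1_A\otimes b)=a\otimes b$ from part~(iii).

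\emph{Comultiplicativity.} Since $A$ is a Hopf subalgebra, $\Delta(e_i)\in A\otimes A$; expanding each tensor factor in the basis $\{e_j\}$ and applying the Hopf pairing axiom $\varphi(a,bb')=\varphi(a_1,b)\varphi(a_2,b')$ I would get $\Delta(e_i)=\sum_{j,k}\varphi(e_i,e_j^*e_k^*)\,e_j\otimes e_k$. Feeding this into $(\Delta\otimes\mathrm{Id})(R)=\sum_i \Delta(e_i)\otimes e_i^*$ and performing the $i$-summation by the second reproducing identity yields $(\Delta\otimes\mathrm{Id})(R)=\sum_{j,k}e_j\otimes e_k\otimes e_j^*e_k^*=R^{13}R^{23}$. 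Dually, from $\varphi(aa',b)=\varphi(a,b_2)\varphi(a',b_1)$ one finds $\Delta(e_i^*)=\sum_{j,k}\varphi(e_ke_j,e_i^*)\,e_j^*\otimes e_k^*$, and then $(\mathrm{Id}\otimes\Delta)(R)=\sum_i e_i\otimes\Delta(e_i^*)=\sum_{j,k}e_ke_j\otimes e_j^*\otimes e_k^*=R^{13}R^{12}$ after the $i$-summation via the first reproducing identity.

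\emph{Invertibility.} To see that $R$ is invertible I would apply $m\circ(S_A\otimes\mathrm{Id})$ and $m\circ(\mathrm{Id}\otimes S_A)$ to the first two tensor legs of the identity $(\Delta\otimes\mathrm{Id})(R)=R^{13}R^{23}$ just obtained, $m$ being multiplication; the antipode axioms $\sum S_A(e_i^{(1)})e_i^{(2)}=\epsilon_A(e_i)1_A=\sum e_i^{(1)}S_A(e_i^{(2)})$ together with $\sum_i\epsilon_A(e_i)e_i^*=\sum_i\varphi(e_i,1_B)e_i^*=1_B$ then give $(S_A\otimes\mathrm{Id})(R)\cdot R=1\otimes 1=R\cdot(S_A\otimes\mathrm{Id})(R)$, so $R^{-1}=(S_A\otimes\mathrm{Id})(R)$.

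\emph{Intertwining.} Finally I must check $R\Delta(x)=\Delta^{\mathrm{op}}(x)R$ for all $x\in D_\varphi(A,B)$. This relation is multiplicative in $x$, and $D_\varphi(A,B)$ is generated by $A$ and $B$, so it is enough to treat $x\in A$ and $x\in B$. In each case I would expand $R\Delta(x)$ and $\Delta^{\mathrm{op}}(x)R$, slide the factors $e_i^*$ and $e_i$ of $R$ across the tensor legs of $\Delta(x)$ using the straightening rule
\begin{equation*}
  (1_A\otimes b)\cdot(a\otimes 1_B)=\varphi(S_A^{-1}(a_1),b_1)\,\varphi(a_3,b_3)\,a_2\otimes b_2
\end{equation*}
from Theorem~\ref{generalized Drinfeld double}(iii), and then collapse the basis sums by the reproducing identities, the Hopf pairing axioms, and the antipode axioms. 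The hard part will be precisely this last step — the Sweedler-index bookkeeping in the intertwining relation — whereas comultiplicativity and invertibility are comparatively formal. Conceptually there is no mystery: the multiplication of $D_\varphi(A,B)$ in Theorem~\ref{generalized Drinfeld double}(iii) is designed so that the canonical element conjugates $\Delta$ into $\Delta^{\mathrm{op}}$, which is exactly what makes the computation close.
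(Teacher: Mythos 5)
The paper offers no proof of this statement: it is quoted verbatim from \cite[Theorem 3.2]{KRT}, so there is no in-paper argument to compare yours against. Judged on its own terms, your outline is the standard proof of quasitriangularity of a generalized Drinfeld double, and the two parts you actually carry out are correct: the expansions $\Delta(e_i)=\sum_{j,k}\varphi(e_i,e_j^*e_k^*)\,e_j\otimes e_k$ and $\Delta(e_i^*)=\sum_{j,k}\varphi(e_ke_j,e_i^*)\,e_j^*\otimes e_k^*$ follow from the pairing axioms (the order reversal in $\varphi(aa',b)=\varphi(a,b_2)\varphi(a',b_1)$ is exactly what produces $e_ke_j$ and hence $R^{13}R^{12}$ rather than $R^{12}R^{13}$), the collapse of the $i$-sum by the reproducing identities gives the two coproduct axioms, and the derivation of $R^{-1}=(S_A\otimes\mathrm{Id})(R)$ from $\epsilon_A(e_i)=\varphi(e_i,1_B)$ is sound.

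The genuine gap is the intertwining relation $R\Delta(x)=\Delta^{\mathrm{op}}(x)R$. Your reduction to generators $x\in A$ and $x\in B$ is valid, but you then only describe the computation instead of performing it. This is not a routine omission: it is the only one of the three axioms that invokes the cross-relation of Theorem~\ref{generalized Drinfeld double}(iii) at all, i.e.\ the only place where the multiplication of $D_\varphi(A,B)$ (rather than just the coalgebra structures and the pairing) enters, and hence the only step that distinguishes the double from the ordinary tensor product Hopf algebra. For $x=a\in A$ one must verify $\sum_i e_ia_1\otimes e_i^*a_2=\sum_i a_2e_i\otimes a_1e_i^*$, straightening $e_i^*a_2$ by the rule in (iii) and then making the $\varphi(S_A^{-1}(\cdot),\cdot)$ and $\varphi(\cdot,\cdot)$ factors telescope via the reproducing identity in $i$ and the antipode axiom; until that computation and its mirror for $x\in B$ are written out, the proof is incomplete precisely at its crux. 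A secondary inaccuracy: for the algebras this paper applies the theorem to, the homogeneous components of $\ddot{U}^{\geq}$ and $\ddot{U}^{\leq}$ in the $\ZZ^{[n]}\times\ZZ$-grading are \emph{not} finite-dimensional (already $S_{\overline{k},0}$ is infinite-dimensional once $|\overline{k}|\geq 2$), so one cannot justify $R$ as a finite sum in each bidegree; $R$ is an element of a completion $A\widehat{\otimes}B$, which is exactly why the conclusion is only \emph{formal} quasitriangularity. This does not invalidate the formal manipulations, but the well-definedness argument as you phrased it does not apply here.
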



\subsection{Quantum toroidal algebra $\ddot{U}_{q,d}(\ssl_n)$ as a Drinfeld double}
$\ $

  In order to apply the constructions of the previous section to the
 quantum toroidal algebra $\ddot{U}_{q,d}(\ssl_n)$ and its subalgebras, we need
 to endow the former with a Hopf algebra structure.
  This was first done (in a more general setup) in~\cite[Theorem 2.1]{DI}:

\begin{thm}\label{Hopf algebra}
  The formulas \emph{(H1-H9)} endow $\ddot{U}_{q,d}(\ssl_n)$ with a topological Hopf algebra structure:
\begin{equation}\tag{H1}\label{H1}
  \Delta(e_i(z))=e_i(z)\otimes 1+ \psi^-_i(\gamma^{1/2}_{(1)}z)\otimes e_i(\gamma_{(1)}z),
\end{equation}
\begin{equation}\tag{H2}\label{H2}
  \Delta(f_i(z))=1\otimes f_i(z)+f_i(\gamma_{(2)}z)\otimes \psi_i^+(\gamma^{1/2}_{(2)}z),
\end{equation}
\begin{equation}\tag{H3}\label{H3}
  \Delta(\psi^\pm_i(z))=\psi^\pm_i(\gamma^{\pm 1/2}_{(2)}z)\otimes \psi^\pm_i(\gamma^{\mp 1/2}_{(1)}z),
\end{equation}
\begin{equation}\tag{H4}\label{H4}
  \Delta(x)=x\otimes x\ \mathrm{for}\ x=\gamma^{\pm 1/2}, q^{\pm d_1}, q^{\pm d_2},
\end{equation}
\begin{equation}\tag{H5}\label{H5}
  \epsilon(e_i(z))=\epsilon(f_i(z))=0,\ \epsilon(\psi^{\pm}_i(z))=1,
\end{equation}
\begin{equation}\tag{H6}\label{H6}
  \epsilon(x)=1\ \mathrm{for}\ x=\gamma^{\pm 1/2}, q^{\pm d_1}, q^{\pm d_2},
\end{equation}
\begin{equation}\tag{H7}\label{H7}
  S(e_i(z))=-\psi^-_i(\gamma^{-1/2}z)^{-1}e_i(\gamma^{-1}z),
\end{equation}
\begin{equation}\tag{H8}\label{H8}
  S(f_i(z))=-f_i(\gamma^{-1}z)\psi^+_i(\gamma^{-1/2}z)^{-1},
\end{equation}
\begin{equation}\tag{H9}\label{H9}
  S(x)=x^{-1}\ \mathrm{for}\ x=\gamma^{\pm 1/2}, q^{\pm d_1}, q^{\pm d_2},\psi^\pm_i(z),
\end{equation}
   where $\gamma^{1/2}_{(1)}:=\gamma^{1/2}\otimes 1$ and $\gamma^{1/2}_{(2)}:=1\otimes \gamma^{1/2}$.
\end{thm}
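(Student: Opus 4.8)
The plan is to verify directly, on the Drinfeld generators, the three compatibilities that make up a topological Hopf algebra structure: that the assignment (H1)--(H4) extends to an algebra homomorphism $\Delta\colon \ddot{U}_{q,d}(\ssl_n)\to \ddot{U}_{q,d}(\ssl_n)\,\widehat{\otimes}\,\ddot{U}_{q,d}(\ssl_n)$ into an appropriate completion, that (H5)--(H6) extends to an algebra homomorphism $\epsilon$ to $\CC$, and that (H7)--(H9) extends to an algebra anti-homomorphism $S$; after that, coassociativity, the counit axioms, and the antipode axioms each reduce to an identity on generators. Before anything else I would pin down the topology. Since $\psi^-_i(z)=\psi_{i,0}^{-1}+\sum_{r>0}\psi_{i,-r}z^{r}$ carries only nonnegative powers of $z$ while $e_i(w)$ is bi-infinite, extracting coefficients from (H1) gives $\Delta(e_{i,m})=e_{i,m}\otimes 1+\sum_{r\geq 0}\psi_{i,-r}\gamma^{(r+m)/2}\otimes e_{i,r+m}\gamma^{\,r+m}$, an infinite sum that converges only in the completion of $\ddot{U}^{0}\otimes \ddot{U}^{+}$ along the $\ZZ$-grading of the second tensor factor (symmetrically for $f$ from (H2)). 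Using the triangular decomposition of Proposition~\ref{triangular_decomposition} to track degrees, I would fix once and for all the graded completion $\ddot{U}_{q,d}(\ssl_n)\,\widehat{\otimes}\,\ddot{U}_{q,d}(\ssl_n)$ in which all of (H1)--(H4) live; the inverse series $\psi^\pm_i(w)^{-1}$ occurring in (H7)--(H8) is likewise a legitimate element of the completion of $\ddot{U}^{0}$.

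The heart of the argument is to check that $\Delta$ annihilates every defining relation (T0.1)--(T7.2), and I would organize this by type. The Cartan-type relations (T0.1)--(T0.4) and (T1) are nearly immediate, as $\Delta$ acts ``diagonally'' on $\psi^\pm_i(z),\gamma^{\pm1/2},q^{\pm d_1},q^{\pm d_2}$ up to harmless shifts of the spectral parameter by powers of $\gamma^{1/2}_{(1)},\gamma^{1/2}_{(2)}$. For the quadratic relations (T2), (T3), (T5), (T6) one expands each of $\Delta(e_i(z))\Delta(e_j(w))$, $\Delta(\psi^\pm_i(z))\Delta(e_j(w))$, etc., into its four summands and matches the two sides using only the rational factor $g_{a_{i,j}}$ together with the Cartan-type identities already checked; the bookkeeping of the $\gamma$-shifts is fiddly but purely mechanical. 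The cross relation (T4) is the first genuinely delicate point: one invokes the $\delta$-function substitution identities $\delta(z/w)\,h(w)=\delta(z/w)\,h(z)$ to see the unwanted $\psi^+\otimes\psi^-$ cross-terms cancel and the diagonal $\delta$-terms reassemble correctly. Finally, the quantum Serre relations (T7.1), (T7.2) --- the cubic ones --- are, I expect, the main obstacle of the whole proof: exploiting that $\Delta(e_i(z))=e_i(z)\otimes 1+\psi^-_i(\gamma^{1/2}_{(1)}z)\otimes e_i(\gamma_{(1)}z)$, one reduces the Serre expression modulo the quadratic relations already established and then collapses the remaining terms using the Serre relation in each tensor factor separately --- standard in spirit but intricate in this two-parameter $A^{(1)}_{n-1}$ setting, and it must be carried out inside the completed tensor product.

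Once $\Delta$ is known to be an algebra map, coassociativity $(\Delta\otimes\mathrm{Id})\Delta=(\mathrm{Id}\otimes\Delta)\Delta$ is verified on $e_i(z),f_i(z),\psi^\pm_i(z)$ and the grouplike elements by direct comparison --- both sides produce the same triple tensor with matching $\gamma$-shifts --- and the counit axioms $(\epsilon\otimes\mathrm{Id})\Delta=\mathrm{Id}=(\mathrm{Id}\otimes\epsilon)\Delta$ are read off immediately from (H1)--(H6). For the antipode I would first check that $S$, extended as an anti-homomorphism, preserves the relations (the Serre relations again being the subtle case, now compounded by the inverse series $\psi^\pm_i(w)^{-1}$), and then verify $m\circ(S\otimes\mathrm{Id})\circ\Delta=\eta\epsilon=m\circ(\mathrm{Id}\otimes S)\circ\Delta$ on generators, where (H7)--(H8) were designed precisely so that the relevant sums telescope. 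Structurally one can shortcut the coproduct/antipode bookkeeping: the ``positive Borel'' generated by $e_i(z),\psi^\pm_i(z)$ and the grouplikes is already a topological bialgebra under (H1), (H3), (H4); it carries a nondegenerate Hopf pairing with the ``negative Borel''; and $\ddot{U}_{q,d}(\ssl_n)$ is obtained from the associated generalized Drinfeld double of Theorem~\ref{generalized Drinfeld double} by identifying the two copies of $\ddot{U}^{0}$, whereupon (H1)--(H9) fall out of part (iii) of that theorem together with the general antipode formula for a double. Either way, the real labor is concentrated in the relation-checking of the previous paragraph, performed inside the completed tensor product.
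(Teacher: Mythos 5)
The paper does not prove this theorem at all: it is imported verbatim from Ding--Iohara (\cite[Theorem 2.1]{DI}), so there is no internal argument to compare yours against. Your outline is the standard direct verification that the cited reference carries out --- fix the graded completion, check that $\Delta$, $\epsilon$, $S$ annihilate each defining relation, then verify coassociativity and the counit/antipode axioms on generators --- and the structure of the plan is sound, including the correct observation that the $\psi^-_i\otimes e_i$ term of (H1) forces a completion along the grading of the second tensor factor. Two caveats. First, as written this is a plan rather than a proof: the steps that carry essentially all of the content --- compatibility of $\Delta$ with (T4) and especially with the Serre relations (T7.1)--(T7.2), and the verification that the anti-homomorphism $S$ respects the relations --- are only asserted to be ``mechanical'' or ``standard in spirit,'' so nothing is established beyond what one would in any case take on faith from \cite{DI}. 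Second, your proposed shortcut through the generalized Drinfeld double needs care in the context of this paper: Theorem~\ref{Drinfeld double sln} is stated after, and logically presupposes, the Hopf structure on the Borel halves $\ddot{U}^{\geq}$ and $\ddot{U}^{\leq}$, so the double can at best hand you the antipode and the cross relations --- it does not spare you the proof that (H1), (H3), (H4) define an algebra map on $\ddot{U}^{\geq}$, which is exactly where the Serre-relation work lives. (The small discrepancies in the powers of $\gamma$ in your coefficient expansion of (H1) are immaterial since $\gamma$ is central, but they are symptomatic of bookkeeping that has not actually been carried out.)
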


   Let $\ddot{U}^{\geq}$ be the subalgebra of $\ddot{U}_{q,d}(\ssl_n)$ generated by
  $\{e_{i,k},\psi_{i,l}, \psi_{i,0}^{\pm 1}, \gamma^{\pm 1/2}, q^{\pm d_1}, q^{\pm d_2}\}_{k\in \ZZ}^{l\in -\NN}$, and
  let $\ddot{U}^{\leq}$ be the subalgebra of $\ddot{U}_{q,d}(\ssl_n)$ generated by
  $\{f_{i,k},\psi_{i,l}, \psi_{i,0}^{\pm 1}, \gamma^{\pm 1/2}, q^{\pm d_1}, q^{\pm d_2}\}_{k\in \ZZ}^{l\in \NN}$.
  Now we are ready to state the main result of this section (the proof is straightforward):

\begin{thm}\label{Drinfeld double sln}
 (a)  There exists a unique Hopf algebra  pairing $\varphi: \ddot{U}^\geq \times \ddot{U}^\leq\to \CC$  satisfying
\begin{equation}\tag{P1}\label{P1}
     \varphi(e_i(z), f_j(w))=\frac{\delta_{i,j}}{q-q^{-1}}\cdot \delta\left(\frac{z}{w}\right),\
     \varphi(\psi^-_i(z),\psi^+_j(w))=g_{a_{i,j}}(d^{m_{i,j}}z/w),
\end{equation}
\begin{equation}\tag{P2}\label{P2}
     \varphi(e_i(z),x^-)=\varphi(x^+,f_i(z))=0\ \mathrm{for}\  x^\pm=\psi^\mp_j(w),\psi_{j,0}^{\pm 1}, \gamma^{1/2}, q^{d_1}, q^{d_2},
\end{equation}
\begin{equation}\tag{P3}\label{P3}
     \varphi(\gamma^{1/2}, q^{d_1})=\varphi(q^{d_1},\gamma^{1/2})=q^{-1/2},\
     \varphi(\psi^-_i(z),q^{d_2})=q^{-1},\ \varphi(q^{d_2},\psi^+_i(z))=q,
\end{equation}
\begin{equation}\tag{P4}\label{P4}
     \varphi(\psi^-_i(z),x)=\varphi(x,\psi^+_i(z))=1\ \mathrm{for}\ x=\gamma^{1/2}, q^{d_1},
\end{equation}
\begin{equation}\tag{P5}\label{P5}
     \varphi(\gamma^{1/2}, q^{d_2})=\varphi(q^{d_2},\gamma^{1/2})=\varphi(q^{d_a},q^{d_b})=\varphi(\gamma^{1/2},\gamma^{1/2})=1\ \mathrm{for}\ 1\leq a,b\leq 2.
\end{equation}

\medskip
\noindent
 (b) The natural Hopf algebra homomorphism
       $D_\varphi(\ddot{U}^\geq, \ddot{U}^\leq)\to\ddot{U}_{q,d}(\ssl_n)$
     induces the isomorphism
       $$\Xi:D_\varphi(\ddot{U}^\geq, \ddot{U}^\leq)/I\iso \ddot{U}_{q,d}(\ssl_n)\ \mathrm{with}\
         I:=(x\otimes 1-1\otimes x|x=\psi_{i,0}^{\pm 1}, \gamma^{\pm 1/2}, q^{\pm d_1}, q^{\pm d_2}).$$

\noindent
 (c) Consider a slight modification $\ddot{U}^{'}_{q,d}(\ssl_n)$, obtained from $\ddot{U}_{q,d}(\ssl_n)$
   by ``throwing away'' the generator $q^{\pm d_2}$ and taking the quotient by the central element $c'$.
   As in (b), this algebra admits the double Drinfeld realization via $D_{\varphi'}(\ddot{U}^{'\geq},\ddot{U}^{'\leq})$,
   where $\ddot{U}^{'\leq}$ and $\ddot{U}^{'\geq}$ are obtained from $\ddot{U}^{\leq}$ and $\ddot{U}^{\geq}$ by ``throwing away'' $q^{\pm d_2}$
   and taking the quotient by $c'$, while $\varphi'$ is induced by $\varphi$.

\medskip
\noindent
 (d)  The pairings $\varphi$ and $\varphi'$ are nondegenerate if and only if $q,qd,qd^{-1}$ are not roots of unity.
\end{thm}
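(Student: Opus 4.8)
The strategy is to derive parts (a)--(c) from the general Drinfeld-double formalism recalled above and to concentrate the real work on (d). For (a), uniqueness of $\varphi$ is automatic: the Hopf pairing axioms $\varphi(a,bb')=\varphi(a_1,b)\varphi(a_2,b')$ and $\varphi(aa',b)=\varphi(a,b_2)\varphi(a',b_1)$, together with the coproduct formulas (H1)--(H4), express $\varphi$ on an arbitrary pair of monomials in the generators of $\ddot{U}^{\geq}$ and $\ddot{U}^{\leq}$ as a polynomial in the prescribed values (P1)--(P5). For existence, one must verify that the resulting bilinear form factors through the defining relations (T0.1)--(T7.2) in each argument; by homogeneity this splits into finitely many identities of generating series --- each of (T1)--(T6) a two-color manipulation of the functions $g_m$ and $\delta(z/w)$, and the Serre relations (T7.1)--(T7.2) reducing to the rank-$\leq 2$ computation familiar from $U_q(\widehat{\ssl}_n)$, now carrying the harmless $d^{m_{i,j}}$-cocycle. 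I would first pin $\varphi$ down on all Fourier modes of $e_i(z),f_j(w),\psi^{\pm}_i(z)$ and then propagate. For (b), the canonical Hopf algebra map $\Xi\colon D_\varphi(\ddot{U}^{\geq},\ddot{U}^{\leq})\to\ddot{U}_{q,d}(\ssl_n)$ is surjective since the $e,f,\psi$ generate, and the cross relations of Theorem~\ref{generalized Drinfeld double}(iii) evaluate --- using precisely the values (P1)--(P5) --- to relations (T4)--(T6) once the doubled generators $\psi_{i,0}^{\pm1},\gamma^{\pm1/2},q^{\pm d_1},q^{\pm d_2}$ of the two factors are identified, i.e.\ modulo $I$; hence $\Xi$ descends to $D_\varphi(\ddot{U}^{\geq},\ddot{U}^{\leq})/I$, and injectivity of the descended map follows by matching its triangular decomposition against Proposition~\ref{triangular_decomposition}. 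Part (c) is the same argument with $q^{\pm d_2}$ deleted and the central $c'=\sum_i h_{i,0}$ killed, affecting only (T0.4), (P3), and routine bookkeeping.

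The content is in (d). Since $\varphi$ is homogeneous for the $\ZZ^{[n]}\times\ZZ$-grading, nondegeneracy reduces to nondegeneracy on each finite-dimensional graded block; combining this with the vanishing (P2) and the factorizations of $\ddot{U}^{\geq}$ (resp.\ $\ddot{U}^{\leq}$) into their $\ddot{U}^{+}$- (resp.\ $\ddot{U}^{-}$-) and Cartan parts, a standard triangularity argument further reduces the claim to nondegeneracy of the two induced pairings: the Cartan pairing and the pairing $\ddot{U}^{+}\times\ddot{U}^{-}$. For the Cartan pairing, passing to the Heisenberg generators $h_{i,r}$ and adjoining the rows and columns of $\gamma^{1/2},q^{d_1},q^{d_2}$ produces, for each $r\neq 0$, an explicit square matrix built from the entries $d^{-rm_{i,j}}[ra_{i,j}]_q$; its determinant is a unit times a product of factors of the form $q^r-1$, $(qd)^r-1$, $(qd^{-1})^r-1$ --- the one-dimensional kernel of the affine Cartan matrix $\|a_{i,j}\|$ being exactly filled in by the three central generators --- so it is nonzero for all $r\neq 0$ precisely when none of $q,qd,qd^{-1}$ is a root of unity. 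For the pairing $\ddot{U}^{+}\times\ddot{U}^{-}$, I would pass to the big shuffle algebra realization $\ddot{U}^{+}\simeq S$ of Section~1 (cf.~\cite{N2}), under which $\varphi$ becomes an explicit residue-type pairing of symmetric Laurent polynomials; a leading-term filtration then renders the Gram matrix of a suitable monomial basis triangular with nonzero diagonal away from the bad values, while at a bad value an explicit null vector already in a rank-$1$ or rank-$2$ graded component gives the converse. The same analysis applies to $\varphi'$, the only modification being in the degree-zero Cartan part, which one checks directly.

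\textbf{The main obstacle} is the nilpotent half of (d): controlling the Gram matrix of a PBW basis of $\ddot{U}^{\pm}$ and pinning down the exact list of excluded parameters. The occurrence of $qd$ and $qd^{-1}$, rather than $q$ alone, is forced by the $d^{m_{i,j}}$-twist in (T2)--(T3), which propagates into both the toroidal root vectors and the structure function of $S$; disentangling these contributions cleanly --- most transparently through the shuffle model and a residue computation --- is where the proof must do something beyond formal manipulation.
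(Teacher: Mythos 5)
The paper supplies no argument for this theorem beyond the parenthetical remark that the proof is straightforward, so there is no written proof to compare against; your outline is precisely the standard route the authors evidently intend, and parts (a)--(c) are fine as you describe them (uniqueness from the pairing axioms together with (H1)--(H4), existence by checking compatibility with the defining relations, surjectivity plus the cross-relations of Theorem~\ref{generalized Drinfeld double}(iii) recovering (T1), (T4)--(T6) modulo $I$, and injectivity via Proposition~\ref{triangular_decomposition}).

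Two points on (d) deserve sharpening. First, your treatment of the Cartan part conflates the degree-zero and degree-$r$ blocks: since $\varphi$ is graded, the group-likes $\gamma^{1/2},q^{d_1},q^{d_2}$ contribute no rows or columns to the block of the modes $h_{i,\pm r}$ with $r\ne 0$; that block is the bare $n\times n$ matrix $\bigl(d^{-rm_{i,j}}[r]_q[ra_{i,j}]_q/r\bigr)$ of Lemma~\ref{change of basis}, and it is already nonsingular under the stated hypotheses without adjoining anything (the exact factorization of its determinant is a circulant computation, with factors $(qd^{\pm 1})^{rn}-1$ rather than $(qd^{\pm1})^{r}-1$, though this does not change the root-of-unity dichotomy). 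The central generators enter only in degree zero, where the singularity of the affine Cartan matrix makes $q^{c'}=\prod_i\psi_{i,0}$ pair trivially with every $\psi_{j,0}$, with $\gamma^{1/2}$ and with $q^{d_1}$, and it is detected only by $\varphi(q^{c'},q^{d_2})=q^{n}$; this degree-zero check is exactly what forces the quotient by $c'$ in part (c) once $q^{d_2}$ is discarded, and it should be made explicit. Second, for the nilpotent half you do not need to build a triangular Gram matrix from scratch: the radical of $\varphi$ restricted to $\ddot{U}^{+}$ is a two-sided ideal, the pairing is computed by the residue formula of Lemma~\ref{pairing sln properties}(c) and therefore factors through $\Psi$, and Theorem~\ref{Negut theorem} (whose proof in \cite{N2} is in substance a nondegeneracy statement for this residue pairing on $S$) supplies the required nondegeneracy of $\varphi$ on $\ddot{U}^{+}\times\ddot{U}^{-}$ under the stated conditions on $q,qd,qd^{-1}$. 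Delegating to that theorem closes the gap you flag as the main obstacle and is surely what the authors mean by ``straightforward.''
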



\subsection{Bethe subalgebras}\label{section_bethe}
$\ $

  Let us recall the standard way of constructing large commutative
 subalgebras of a (formally) quasitriangular Hopf algebra $A$.
 Fix a group-like element $x\in A$ (or in an appropriate completion $x\in A^\wedge$).
 For an $A$-representation $V$, we consider the \emph{transfer matrix}
    $$T_V(x):=(1\otimes \tr_V)((1\otimes x)R)$$
 if the latter is well defined. The properties of the $R$-matrix imply
   $$T_{V_1\oplus V_2}(x)=T_{V_1}(x)+T_{V_2}(x),$$
   $$T_{V_1\otimes V_2}(x)=T_{V_2}(x)\cdot T_{V_1}(x).$$
  In particular, we see that $T_{V_1}(x)\cdot T_{V_2}(x)=T_{V_2}(x)\cdot T_{V_1}(x)$.

 To summarize, $\bullet\mapsto T_{\bullet}(x)$ is a ring homomorphism from the Grothendieck group
 of any suitable tensor category of $A$-modules to the suitable completion $A^\wedge$, with
 the image being a commutative subalgebra of that completion.
  The commutative subalgebras constructed in this way are sometimes called the \emph{Bethe (sub)algebras}.


   In Section 3, we will apply this construction to the following two cases:

 $\circ$ The formally quasitriangular algebra is $A=\ddot{U}^{'}_{q,d}(\ssl_n)$, the corresponding
 group-like element is $x=q^{\lambda_1 h_{1,0}+\cdots+\lambda_{n-1}h_{n-1,0}+\lambda_n d_1}$, and we consider
 a tensor category of $\ddot{U}^{'}_{q,d}(\ssl_n)$-representations generated by vertex $\ddot{U}^{'}_{q,d}(\ssl_n)$-representations
 $\rho_{p,\bar{c}}$ from Section 3.1.\footnote{\ Actually, one can consider the whole category of highest weight $\ddot{U}^{'}_{q,d}(\ssl_n)$-representations,
 see~\cite{M}.}

 $\circ$ The formally quasitriangular algebra is $A=U_q(L\gl_n)$ (see Section 3.4), the corresponding
 group-like element is $x=q^{\lambda_1 h_{1,0}+\cdots+\lambda_{n-1}h_{n-1,0}}$
 (the most generic element of the \emph{finite Cartan} part), and we consider the tensor category of all finite-dimensional
 $U_q(L\gl_n)$-representations.


\subsection{Small shuffle algebra}\label{section_small_shuffle}
 $\ $

  As a motivating point for the current paper, we briefly recall the
 notion of the \emph{small} shuffle algebra and its particular commutative subalgebra.
 Let $\ZZ_+:=\{n\in\ZZ|n\geq 0\}=\NN\cup \{0\}$.
  Consider a $\ZZ_+$-graded $\CC$-vector space $\sS^{\mathrm{sm}}=\bigoplus_{n\geq 0}\sS^{\mathrm{sm}}_n$,
 where $\sS^{\mathrm{sm}}_n$ consists of rational functions $\frac{f(x_1,\ldots,x_n)}{\Delta(x_1,\ldots,x_n)}$
 with $f\in \CC[x_1^{\pm 1},\ldots,x_n^{\pm 1}]^{\mathfrak{S}_n}$ and $\Delta(x_1,\ldots,x_n):=\prod_{i\ne j}(x_i-x_j)$.
  Define the star product $\overset{\mathrm{sm}}\star:\sS^{\mathrm{sm}}_k\times \sS^{\mathrm{sm}}_l\to \sS^{\mathrm{sm}}_{k+l}$
  by
 $$(F\overset{\mathrm{sm}}\star G)(x_1,\ldots,x_{k+l}):=\mathrm{Sym}_{\mathfrak{S}_{k+l}}\left(F(x_1,\ldots,x_k)G(x_{k+1},\ldots,x_{k+l})
   \prod_{i\leq k}^{j>k}\lambda(x_i/x_j)\right)$$
 with
  $$\lambda(x):=\frac{(q_1x-1)(q_2x-1)(q_3x-1)}{(x-1)^3},\ \mathrm{where}\ q_i\in \CC\backslash\{0,1\}\ \mathrm{and}\ q_1q_2q_3=1.$$
 This endows $\sS^{\mathrm{sm}}$ with a structure of an associative unital $\CC$-algebra with the unit $\textbf{1}\in \sS^{\mathrm{sm}}_0$.

  We say that an element $\frac{f(x_1,\ldots,x_n)}{\Delta(x_1,\ldots,x_n)}\in \sS^{\mathrm{sm}}_n$
 satisfies the \emph{wheel conditions} if and only if
  $$f(x_1,\ldots,x_n)=0\ \mathrm{once}\ x_{i_1}/x_{i_2}=q_1\ \mathrm{and}\ x_{i_2}/x_{i_3}=q_2\ \mathrm{for\ some}\ 1\leq i_1,i_2,i_3\leq n.$$
  Let $S^{\mathrm{sm}}\subset \sS^{\mathrm{sm}}$ be a $\ZZ_+$-graded subspace, consisting of all such elements.
 The subspace $S^{\mathrm{sm}}$ is $\overset{\mathrm{sm}}\star$-closed (see~\cite[Proposition 2.10]{FS}).

\begin{defn}
  The algebra $(S^\mathrm{sm},\overset{\mathrm{sm}}\star)$ is called the \textit{small shuffle algebra}.
\end{defn}

   Following~\cite{FS}, we introduce an important $\ZZ_+$-graded subspace $\A^{\mathrm{sm}}=\bigoplus_{n} \A^{\mathrm{sm}}_n$ of $S^{\mathrm{sm}}$.
  Its degree $n$ component is defined by
   $$\A^{\mathrm{sm}}_n:=\{F\in S^{\mathrm{sm}}_n|\partial^{(0;k)}F, \partial^{(\infty;k)}F\ \mathrm{exist\ and\ }
     \partial^{(0;k)}F=\partial^{(\infty;k)}F\ \ \forall\ 0\leq k\leq n\},$$
  where
  $$\partial^{(0;k)}F:=\underset{\xi\to 0}\lim F(x_1,\ldots,\xi\cdot x_{n-k+1},\ldots,\xi\cdot x_n),\
    \partial^{(\infty;k)}F:=\underset{\xi\to \infty}\lim F(x_1,\ldots,\xi\cdot x_{n-k+1},\ldots,\xi\cdot x_n)$$
  whenever the limits exist.

  This subspace satisfies the following properties:

\begin{thm}\cite[Section 2]{FS}\label{comm_subalg_m}
 We have:

\noindent
 (a) Suppose $F\in S^{\mathrm{sm}}_n$ and $\partial^{(\infty;k)}F$ exist for all $0\leq k\leq n$, then $F\in \A^{\mathrm{sm}}_n$.

\noindent
 (b) The subspace $\A^{\mathrm{sm}}\subset S^{\mathrm{sm}}$ is $\overset{\mathrm{sm}}\star$-commutative.

\noindent
 (c) $\A^{\mathrm{sm}}$ is $\overset{\mathrm{sm}}\star$-closed and it is a
 polynomial algebra in $\{K_j\}_{j\geq 1}$ with $K_j\in S^{\mathrm{sm}}_j$ defined by:
  $$K_1(x_1)=x_1^0,\ K_2(x_1,x_2)=\frac{(x_1-q_1x_2)(x_2-q_1x_1)}{(x_1-x_2)^2},\ K_m(x_1,\ldots,x_m)=\prod_{1\leq i<j\leq m} K_2(x_i,x_j).$$
\end{thm}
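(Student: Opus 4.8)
The plan is to treat (a) as a rigidity lemma, then to construct the generators $K_m$ and verify their basic properties, and finally to reduce (b) and (c) to a ``coproduct''-type compatibility together with a dimension count. For (a), I would analyze $F=f/\Delta$ through its behaviour under the simultaneous rescaling $x_i\mapsto\xi x_i$, $i\in I$, for a $k$-element subset $I\subset\{1,\dots,n\}$. Tracking powers of $\xi$, the denominator $\Delta$ contributes a ``window'' of $\xi$-exponents of width exactly $2k(n-k)$, coming from the cross pairs $(i,j)$ with $i\in I,\ j\notin I$, while the hypothesis that every $\partial^{(\infty;k)}F$ exists bounds, in each coordinate direction, the top $\xi$-degree of the numerator $f$. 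The point to establish is that the wheel conditions --- forcing $f$ to vanish on the loci $x_a/x_b=q_1,\ x_b/x_c=q_2$ --- make these bounds rigid, so that the $\xi$-window of $f$ must line up with that of $\Delta$; then $\partial^{(0;k)}F$ automatically exists, and its value, read off from the same vanishing locus of $f$ that controls $\partial^{(\infty;k)}F$, equals $\partial^{(\infty;k)}F$. I would run this as an induction on $k$, rewriting a $k$-variable degeneration as an iterated one-variable degeneration and invoking the statement for $S^{\mathrm{sm}}_{n-1}$; the inductive step is where the wheel conditions do the real work.

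The generators $K_m$ are handled by direct computation. Membership $K_m\in S^{\mathrm{sm}}_m$ is immediate: the denominator of $K_m=\prod_{i<j}K_2(x_i,x_j)$ equals $\prod_{i<j}(x_i-x_j)^2=\pm\Delta$, its numerator $\prod_{i<j}(x_i-q_1x_j)(x_j-q_1x_i)$ is a symmetric polynomial, and on the wheel $x_a/x_b=q_1$ the factor $x_a-q_1x_b$ vanishes. Membership $K_m\in\A^{\mathrm{sm}}_m$ is checked factor by factor: under a rescaling of a $k$-element subset $I$, each factor $K_2$ with both indices inside $I$ or both outside $I$ is unchanged (it is homogeneous of degree $0$), whereas each cross factor $K_2(x_i,\xi x_j)$ tends to $-q_1$ both as $\xi\to\infty$ and as $\xi\to0$; hence $\partial^{(\infty;k)}K_m=(-q_1)^{k(m-k)}\,K_{m-k}\cdot K_{k}=\partial^{(0;k)}K_m$, where the two factors live in the unscaled and in the scaled groups of variables respectively. (Alternatively, once the existence of the $\partial^{(\infty;\bullet)}K_m$ is recorded, part (a) applies.)

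For (b) and (c) I would first make precise the bialgebra-type structure visible in the last display: for $F\in\A^{\mathrm{sm}}$ the coinciding limits $\partial^{(\infty;k)}F=\partial^{(0;k)}F$ decompose as a sum $\sum F'_{(k)}\otimes F''_{(n-k)}$ with $F'_{(k)}\in\A^{\mathrm{sm}}_k$ in the rescaled variables and $F''_{(n-k)}\in\A^{\mathrm{sm}}_{n-k}$ in the remaining ones. Commutativity of $\overset{\mathrm{sm}}\star$ on $\A^{\mathrm{sm}}$ then follows by comparing $F\overset{\mathrm{sm}}\star G$ with $G\overset{\mathrm{sm}}\star F$: both are symmetrizations of $F(\underline{y})G(\underline{z})$ against a product of $\lambda$-factors over the cross pairs, differing only by the replacement of each $\lambda(y_a/z_b)$ with $\lambda(z_b/y_a)$; their difference lies in $S^{\mathrm{sm}}_{k+l}$ by $\overset{\mathrm{sm}}\star$-closedness, and one shows by induction on $k+l$ --- using (a) to detect vanishing through the $\partial^{(\infty;\bullet)}$'s, with the base case handled by the explicit form of $\lambda$ and by the fact that $F,G$ have poles only along the diagonals inside their own blocks --- that this difference degenerates to zero, and hence vanishes. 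Finally, for the polynomial structure: the bounds coming from all $\partial^{(\infty;\bullet)}$ (degrees bounded above) together with those from all $\partial^{(0;\bullet)}$ (degrees bounded below) show that $\A^{\mathrm{sm}}_n$ is finite-dimensional, and a count gives $\dim\A^{\mathrm{sm}}_n\le p(n)$, the number of partitions of $n$; on the other hand the $\overset{\mathrm{sm}}\star$-monomials $K_{\lambda_1}\overset{\mathrm{sm}}\star\cdots\overset{\mathrm{sm}}\star K_{\lambda_r}$ with $\lambda\vdash n$ are linearly independent --- as one sees by applying the iterated factorized coproduct above and reading off the leading term --- so they form a basis, $\A^{\mathrm{sm}}$ is $\overset{\mathrm{sm}}\star$-closed, and $\A^{\mathrm{sm}}=\CC[K_1,K_2,\dots]$ as an $\overset{\mathrm{sm}}\star$-algebra.

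The two steps I expect to be the main obstacles are the upper bound $\dim\A^{\mathrm{sm}}_n\le p(n)$ --- translating the equalities $\partial^{(0;k)}F=\partial^{(\infty;k)}F$ into a sharp count of the admissible numerators is the combinatorial core of the argument --- and the commutativity, where one must control how the spurious third-order poles at $y_a=z_b$ in $F\overset{\mathrm{sm}}\star G-G\overset{\mathrm{sm}}\star F$ cancel after symmetrization. In both places the wheel conditions and part (a) are the essential levers.
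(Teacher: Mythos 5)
The first thing to note is that the paper does not prove this statement at all: it is imported verbatim from~\cite{FS}, so the only in-paper material to measure your proposal against is the proof of the higher-rank analogue, Theorem~\ref{main1}, carried out in Lemmas~\ref{lemma 1}--\ref{lemma 4} (upper bound via a Gordon-type filtration by specialization maps, explicit generators, lower bound via the specializations $\varphi_\lambda$, and commutativity via $\varphi_{(2,1^{m_1+m_2-2})}$). Your overall architecture --- explicit generators $K_m$ checked factor by factor, an upper bound $\dim\A^{\mathrm{sm}}_n\le p(n)$, linear independence of the monomials $K_{\lambda_1}\overset{\mathrm{sm}}\star\cdots\overset{\mathrm{sm}}\star K_{\lambda_r}$, and commutativity by degeneration --- is the same as that of~\cite{FS} and of Section~2 here, and your computation $\partial^{(\infty;k)}K_m=(-q_1)^{k(m-k)}K_{m-k}\cdot K_k=\partial^{(0;k)}K_m$ is correct. (One small remark: part (a) implicitly presupposes $\mathrm{tot.deg}\,F=0$, as in the condition $F\in S_{\overline{k},0}$ of the Section~2 analogue; given that, the substitution $x\mapsto\xi^{-1}x$ together with the $\mathfrak{S}_n$-symmetry already interchanges the $\infty$- and $0$-degenerations, so the wheel conditions are not where the real work in (a) happens.)

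Two of your steps, however, have genuine gaps. First, the bound $\dim\A^{\mathrm{sm}}_n\le p(n)$ is asserted rather than proved; you rightly flag it as the combinatorial core, but the raw degree bounds coming from the $\partial^{(\infty;\bullet)}$ and $\partial^{(0;\bullet)}$ do not by themselves yield it. The actual mechanism (Lemma~\ref{lemma 1} here, and its prototype in~\cite{FS}) is a filtration by specializations indexed by partitions, sending the variables of each part to a geometric progression $y_t, q_1 y_t, q_1^2 y_t,\dots$; the wheel conditions and the limit conditions then force each associated graded piece to be at most one-dimensional. Without this device (or an equivalent one) the proof does not close. Second, your commutativity argument is invalid as stated: knowing that all \emph{proper} degenerations $\partial^{(\infty;k)}$, $0<k<n$, of the commutator vanish does not force the commutator itself to vanish, because $\A^{\mathrm{sm}}$ contains nonzero ``primitive'' elements with exactly this property --- the coefficients $L_k$ of $\log\bigl(\sum_m K_m t^m\bigr)$, cf.\ Theorem~\ref{main3}(c) for the big-shuffle analogue. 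Your induction therefore only reduces the commutator to a linear combination of such primitive elements of top degree, and a separate argument is required to kill that residue; this is precisely what the paper's Lemma~\ref{lemma 4} supplies via the further specialization $x_{i,j}\mapsto y_i$ after the $\varphi_{(2,1^{m_1+m_2-2})}$ step. With these two ingredients added, the rest of your outline goes through.
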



\subsection{Big shuffle algebra}
$\ $

  Consider a $\ZZ_+^{[n]}$-graded $\CC$-vector space
    $$\sS=\underset{\overline{k}=(k_0,\ldots,k_{n-1})\in \ZZ_+^{[n]}}\bigoplus\sS_{\overline{k}},$$
 where $\sS_{k_0,\ldots,k_{n-1}}$ consists of $\prod\mathfrak{S}_{k_i}$-symmetric rational functions in the variables
  $\{x_{i,j}\}_{i\in [n]}^{1\leq j\leq k_i}$.
 We also fix an $n\times n$ matrix of rational functions
   $\Omega=(\omega_{i,j}(z))_{i,j\in [n]} \in \mathrm{Mat}_{n\times n}(\CC(z))$
 by setting
  $$\omega_{i,i}(z)=\frac{z-q^{-2}}{z-1},\ \omega_{i,i+1}(z)=\frac{d^{-1}z-q}{z-1},\ \omega_{i,i-1}(z)=\frac{z-qd^{-1}}{z-1},\
    \mathrm{and}\ \omega_{i,j}(z)=1\ \mathrm{otherwise}.$$
  Let us now introduce the bilinear $\star$ product on $\sS$: given  $f\in \sS_{\overline{k}}, g\in \sS_{\overline{l}}$ define
 $f\star g\in \sS_{\overline{k}+\overline{l}}$ by
 $$(f\star g)(x_{0,1},\ldots,x_{0,k_0+l_0};\ldots;x_{n-1,1},\ldots, x_{n-1,k_{n-1}+l_{n-1}}):=$$
 $$\Sym_{\prod\mathfrak{S}_{k_i+l_i}}
   \left(f(\{x_{i,j}\}_{i\in [n]}^{1\leq j\leq k_i})
   g(\{x_{i,j}\}_{i\in [n]}^{k_i<j\leq k_i+l_i})\times
   \prod_{i\in [n]}^{i'\in [n]}\prod_{j\leq k_i}^{j'>k_{i'}}\omega_{i,i'}(x_{i,j}/x_{i',j'})\right).$$

\medskip
  This endows $\sS$ with a structure of an associative unital algebra with the unit $\textbf{1}\in \sS_{0,\ldots,0}$.
 We will be interested only in a certain subspace of $\sS$, defined by the \emph{pole} and \emph{wheel conditions}:

\medskip
 \noindent
 $\bullet$ We say that $F\in \sS_{\overline{k}}$ satisfies the \emph{pole conditions} if and only if
 $$F=\frac{f(x_{0,1},\ldots,x_{n-1,k_{n-1}})}{\prod_{i\in [n]}\prod_{j\leq k_i}^{j'\leq k_{i+1}}(x_{i,j}-x_{i+1,j'})},\
   \mathrm{where}\ f\in (\CC[x_{i,j}^{\pm 1}]_{i\in [n]}^{1\leq j\leq k_i})^{\prod \mathfrak{S}_{k_i}}.$$

 \noindent
 $\bullet$
  We say that $F\in \sS_{\overline{k}}$ satisfies the \emph{wheel conditions} if and only if
   $$F(x_{0,1},\ldots,x_{n-1,k_{n-1}})=0\
     \mathrm{once}\ x_{i,j_1}/x_{i+\epsilon,l}=qd^{\epsilon}\ \mathrm{and}\ x_{i+\epsilon,l}/x_{i,j_2}=qd^{-\epsilon}\ \mathrm{for\ some}\ i,\epsilon,j_1,j_2,l,$$
 where $\epsilon\in \{\pm 1\},\ i\in [n],\ 1\leq j_1,j_2\leq k_i,\ 1\leq l\leq k_{i+\epsilon}$
 and we use the cyclic notation $x_{n,l}:=x_{0,l},\ k_n:=k_0,\ x_{-1,l}:=x_{n-1,l},\ k_{-1}:=k_{n-1}$ as before.

\medskip
 \noindent
  Let $S_{\overline{k}}\subset \sS_{\overline{k}}$ be the subspace of all elements $F$ satisfying the above two conditions and set
 $$S:=\underset{\overline{k}\in \ZZ_+^{[n]}}\oplus S_{\overline{k}}.$$
  Further $S_{\overline{k}}=\oplus_{d\in \ZZ}S_{\overline{k},d}$ with
  $S_{\overline{k},d}:=\{F\in S_{\overline{k}}|\mathrm{tot.deg}(F)=d\}$.
 The following is straightforward:

\begin{lem}
 The subspace $S\subset\sS$ is $\star$-closed.
\end{lem}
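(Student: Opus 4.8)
The plan is to show separately that the pole conditions and the wheel conditions are preserved under the $\star$ product; closedness of $S$ then follows since $S_{\overline{k}}$ is by definition the intersection of these two subspaces. Throughout I would work with a fixed pair $f\in S_{\overline{k}}$, $g\in S_{\overline{l}}$ and analyze the symmetrization
\[
  f\star g=\Sym_{\prod\mathfrak{S}_{k_i+l_i}}\!\left(f(\{x_{i,j}\}^{j\le k_i})\,g(\{x_{i,j}\}^{j>k_i})\prod_{i,i'}\prod_{j\le k_i}^{j'>k_{i'}}\omega_{i,i'}(x_{i,j}/x_{i',j'})\right),
\]
term by term over the coset representatives, observing that it suffices to check each condition on the single "identity" summand and then note that the remaining summands are obtained by relabeling variables within each color $i$, which preserves both conditions.

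First I would treat the \textbf{pole conditions}. The only factors in the identity summand that can produce poles are: the denominators of $f$ and $g$ (which, by hypothesis, are exactly $\prod_{i}\prod_{j\le k_i,\,j'\le k_{i+1}}(x_{i,j}-x_{i+1,j'})$ for $f$, and the analogous product over the $g$-variables), together with the cross factors $\omega_{i,i'}(x_{i,j}/x_{i',j'})$, whose only poles occur when $i'=i$ (giving $x_{i,j}-x_{i,j'}$ in the denominator, clearing against the Vandermonde-type symmetrization — more precisely these are cancelled after symmetrizing, exactly as in the small-shuffle case) or when $i'=i\pm 1$ (giving a simple pole along $x_{i,j}=x_{i\pm1,j'}$). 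Collecting, the product $f\cdot g\cdot\prod\omega$ has its poles only along hyperplanes $x_{i,j}=x_{i+1,j'}$ for all admissible $j,j'$ with $j\le k_i+l_i$, $j'\le k_{i+1}+l_{i+1}$ — and each such pole is at most simple, because in each of the three contributing products ($f$, $g$, cross term) a given hyperplane $x_{i,j}=x_{i+1,j'}$ appears at most once. Hence $f\star g$ times $\prod_{i}\prod_{j\le k_i+l_i}^{j'\le k_{i+1}+l_{i+1}}(x_{i,j}-x_{i+1,j'})$ is a Laurent polynomial, and it is $\prod\mathfrak{S}_{k_i+l_i}$-symmetric since $f\star g$ is symmetric and the denominator is symmetric under the relevant products of symmetric groups; this is precisely the pole condition for $f\star g$.

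Next, the \textbf{wheel conditions}. Here I would fix a "wheel" specialization $x_{i,j_1}/x_{i+\epsilon,l}=qd^{\epsilon}$, $x_{i+\epsilon,l}/x_{i,j_2}=qd^{-\epsilon}$ and show every summand of $f\star g$ vanishes under it. In a given summand the three indices $j_1,l,j_2$ get distributed among the $f$-block and the $g$-block. If all three lie in the $f$-block (or all three in the $g$-block), that factor vanishes by the wheel condition on $f$ (resp. $g$). The remaining cases are those where the triple is split; there the vanishing must come from the cross product $\prod\omega_{i,i'}$, and I would check that in each split case one of the cross factors $\omega_{i,i\pm1}$ has a numerator zero exactly at $z=qd^{\mp1}$ (namely $\omega_{i,i+1}(z)$ vanishes at $z=qd$ and $\omega_{i,i-1}(z)$ vanishes at $z=qd^{-1}$) landing precisely on one of the two ratios imposed by the wheel. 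I expect the main obstacle to be the careful bookkeeping in these mixed cases — matching the directed ratios $qd^{\epsilon}$, $qd^{-\epsilon}$ against the correct off-diagonal entries of $\Omega$ with the right orientation of the argument $x_{i,j}/x_{i',j'}$, and checking that no spurious pole from the denominator of $f\star g$ or from a diagonal $\omega_{i,i}$ cancels the zero. Once all cases are accounted for, each summand vanishes on the wheel, hence so does $f\star g$, completing the proof that $S$ is $\star$-closed.
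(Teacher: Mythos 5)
The paper offers no written proof (it labels the lemma ``straightforward''), and your plan is exactly the intended verification, so the overall approach is fine; both halves go through. Two small corrections to the details you sketched. For the pole conditions, the reason a hyperplane $x_{i,j}=x_{i+1,j'}$ contributes at most a simple pole is not that it occurs at most once in each of the three products, but that it occurs in \emph{exactly one} of them: the pair $(x_{i,j},x_{i+1,j'})$ is either entirely in the $f$-block (pole only from $f$'s denominator), entirely in the $g$-block, or split (pole only from the single cross factor $\omega_{i,i+1}$ or $\omega_{i+1,i}$ with the $f$-variable in the numerator slot of the argument) --- as literally written, ``at most once in each of three products'' would only bound the order by $3$. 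For the wheel conditions, your claim that the vanishing in every split case comes from an off-diagonal $\omega_{i,i\pm1}$ is not correct: when $x_{i,j_2}$ lies in the $f$-block and $x_{i,j_1}$ in the $g$-block, the vanishing factor is the \emph{diagonal} one, $\omega_{i,i}(x_{i,j_2}/x_{i,j_1})$ evaluated at $q^{-2}$ (using the implied relation $x_{i,j_1}=q^{2}x_{i,j_2}$); the off-diagonal factors evaluated at the reciprocal ratios $(qd^{\pm1})^{-1}$ do \emph{not} vanish. The clean way to close your case analysis is to view the wheel as the directed $3$-cycle $j_1\to l\to j_2\to j_1$ with ratios $qd^{\epsilon}$, $qd^{-\epsilon}$, $q^{-2}$, which are precisely the zeros of $\omega_{i,i+\epsilon}$, $\omega_{i+\epsilon,i}$, $\omega_{i,i}$ respectively; any splitting of the three vertices between the $f$- and $g$-blocks that uses both blocks contains a directed edge from an $f$-vertex to a $g$-vertex, and that edge is exactly an ordered cross pair whose $\omega$-factor vanishes on the wheel locus (at whose generic point no denominator vanishes, as you correctly note). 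With these repairs your argument is complete.
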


 Now we are ready to introduce the main algebra of this paper:

\begin{defn}
 The algebra $(S,\star)$ is called the big shuffle algebra (of $A_{n-1}^{(1)}$-type).
\end{defn}

\subsection{Relation between $S$ and $\ddot{U}^+$}
$\ $

  Recall the subalgebra $\ddot{U}^+$ of $\ddot{U}_{q,d}(\ssl_n)$ from Section 1.1. By standard results, $\ddot{U}^+$
 is generated by $\{e_{i,k}\}_{i\in [n]}^{k\in \ZZ}$ with the defining relations (T2, T7.1). The following is straightforward:

\begin{prop}\label{homomorphisms}
  There exists a unique algebra homomorphism $\Psi:\ddot{U}^+\to \sS$ such
 that $\Psi(e_{i,k})=x_{i,1}^k\ \forall\ i\in [n], k\in \ZZ$.
\end{prop}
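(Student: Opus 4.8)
The plan is to verify that the map $\Psi(e_{i,k}) := x_{i,1}^k$ respects the two families of defining relations (T2) and (T7.1), and then invoke the presentation of $\ddot U^+$ by generators and relations to conclude that $\Psi$ extends (uniquely) to an algebra homomorphism $\ddot U^+\to\sS$. Uniqueness is immediate once existence is known, since the $e_{i,k}$ generate $\ddot U^+$ and their images are prescribed. So the entire content is the check that the defining relations are sent to identities in $(\sS,\star)$.

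First I would compute $\Psi(e_i(z))\star\Psi(e_j(w))$ directly from the definition of the $\star$-product. Here $\Psi(e_i(z)) = \sum_k x_{i,1}^k z^{-k} = \delta(x_{i,1}/z)$ lives in $\sS_{1_i}$, so the product lands in $\sS_{1_i+1_j}$, which (when $i\neq j$) is a space of functions in the two independent variables $x_{i,1},x_{j,1}$, and (when $i=j$) a space of $\mathfrak S_2$-symmetric functions in $x_{i,1},x_{i,2}$. The single cross-term factor $\omega_{i,j}(x_{i,1}/x_{j,1})$ (respectively two terms after symmetrization when $i=j$) produces exactly the rational prefactor $g_{a_{i,j}}(d^{m_{i,j}}z/w)$ once one matches $\omega_{i,i}(z)=\frac{z-q^{-2}}{z-1}$, $\omega_{i,i\pm1}(z)$ with $g_{a_{i,j}}$: note $g_2(z)=\frac{q^2z-1}{z-q^2}$, and the identification $g_{a_{i,j}}(d^{m_{i,j}}z/w) = \omega_{j,i}(w/z)/\omega_{i,j}(z/w)$ up to the normalization built into the delta-functions. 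Concretely I would show
$$
\Psi(e_i(z))\star\Psi(e_j(w)) = \Big(\text{rational function}\Big)\cdot\delta\!\Big(\tfrac{x\,}{z}\Big)\delta\!\Big(\tfrac{x'}{w}\Big)\ +\ (i=j\text{-symmetrized term}),
$$
and that swapping $(z,i)\leftrightarrow(w,j)$ multiplies the rational function precisely by the factor $g_{a_{i,j}}(d^{m_{i,j}}z/w)$ appearing in (T2). This is the same computation that underlies the standard fact that shuffle algebras of this type receive a homomorphism from the positive half of the quantum toroidal algebra; it is bookkeeping with $\delta$-functions and the explicit $\omega_{i,j}$.

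Next I would check the Serre-type relations (T7.1): the quadratic commutation $[e_i(z),e_j(w)]=0$ for $j\neq i,i\pm1$ follows immediately from the previous step since $\omega_{i,j}\equiv 1$ and $\omega_{j,i}\equiv 1$ there, so the $\star$-product is literally symmetric; and the cubic $\Sym_{z_1,z_2}[e_i(z_1),[e_i(z_2),e_{i\pm1}(w)]_q]_{q^{-1}}=0$ must be verified by expanding the iterated $q$-bracket after applying $\Psi$ and collecting the $\mathfrak S_3$-symmetrization (three variables of color $i$ — wait, two of color $i$ and one of color $i\pm1$; so a $\mathfrak S_2$-symmetrization in the $i$-variables together with the $\Sym_{z_1,z_2}$). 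The cleanest route is to feed $\Psi(e_i(z_1)),\Psi(e_i(z_2)),\Psi(e_{i\pm1}(w))$ into the triple $\star$-product, observe that the result is $\delta(x_{i,1}/z_1)\delta(x_{i,2}/z_2)\delta(x_{i\pm1,1}/w)$ times a rational function that is already $\mathfrak S_2$-antisymmetrized appropriately by the $\omega$'s, and then verify the vanishing of the scalar coefficient of the symmetrized cubic bracket — this reduces to an elementary identity among the three rational functions $\omega_{i,i}$, $\omega_{i,i\pm1}$, $\omega_{i\pm1,i}$ evaluated at the relevant ratios of $z_1,z_2,w$.

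The main obstacle is precisely this last verification of the cubic relation: unlike (T2), which is a one-line $\delta$-function identity, (T7.1) requires correctly tracking the combinatorics of the iterated $q$-commutator together with the symmetrization, and matching it against a nontrivial (but elementary) cancellation among the shuffle cross-terms. I expect this is exactly why the authors write ``the following is straightforward'' — the result is standard and the computation, while slightly tedious, uses nothing beyond the explicit definitions of $\Omega$, the $\star$-product, and the generating-series form of relations (T2, T7.1). One should also note at the outset that $\Psi$ has no obligation to land in the subspace $S$ cut out by the pole and wheel conditions; its image in $\sS$ is all that is claimed here (the refinement that $\mathrm{im}\,\Psi\subseteq S$ is a separate, later statement), so no extra work on those conditions is needed for this Proposition.
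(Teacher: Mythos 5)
Your proposal is correct and is exactly the argument the paper intends: the sentence immediately preceding the Proposition records that $\ddot{U}^+$ is presented by the generators $e_{i,k}$ with relations (T2, T7.1), so the whole content is the verification that the assignment $e_{i,k}\mapsto x_{i,1}^k$ sends these relations to identities in $(\sS,\star)$ via the $\delta$-function calculus you describe (and you are right that neither the pole nor the wheel conditions play any role here). One small slip: the required identity is $g_{a_{i,j}}(d^{m_{i,j}}z/w)=\omega_{i,j}(z/w)/\omega_{j,i}(w/z)$, i.e.\ the reciprocal of the ratio you wrote, as one sees by matching the cross-factor $\omega_{i,j}(x_{i,1}/x_{j,1})$ of the left-hand product against $\omega_{j,i}(x_{j,1}/x_{i,1})$ of the right-hand one.
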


  As a consequence, $\mathrm{Im}(\Psi)\subset S$.
 The following beautiful result was recently proved by Negut:

\begin{thm}\cite[Theorem 1.1]{N2}\label{Negut theorem}
 The homomorphism $\Psi:\ddot{U}^+\to S$ is an isomorphism of $\ZZ_+^{[n]}\times \ZZ$-graded algebras.
\end{thm}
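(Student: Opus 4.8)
The plan is to establish the isomorphism $\Psi:\ddot{U}^+\to S$ in two stages: first show that $\Psi$ is surjective onto $S$, then show it is injective. For surjectivity, I would argue that $S$ is generated as an algebra by its "minimal" graded pieces $S_{1_i,d}$ for $i\in[n]$, $d\in\ZZ$, each of which is one-dimensional, spanned by $x_{i,1}^d=\Psi(e_{i,d})$; hence $\mathrm{Im}(\Psi)\supseteq S$, and combined with the already-noted inclusion $\mathrm{Im}(\Psi)\subseteq S$ this gives $\mathrm{Im}(\Psi)=S$. The key technical input here is the \emph{shuffle algebra generation} statement: every $F\in S_{\overline{k}}$ can be written as a $\star$-product of degree-$1_i$ elements. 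One proves this by induction on $|\overline{k}|=\sum k_i$, using the pole and wheel conditions to control the "specialization maps" (the operators $\partial$ analogous to those in the small case, i.e.\ taking the leading term as one group of variables is rescaled) and showing that the kernel of the relevant family of such maps on $S_{\overline{k}}$ is trivial — so that an element is reconstructed from its images under lower-rank specializations.

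For injectivity, the cleanest route is a dimension (or rather, graded-character) count combined with the known presentation of $\ddot U^+$. By the triangular decomposition (Proposition~\ref{triangular_decomposition}) and the standard result that $\ddot U^+$ is generated by $\{e_{i,k}\}$ with defining relations (T2, T7.1), one has an \emph{upper bound} on the graded dimensions of $\ddot U^+$. On the shuffle side one produces, for each $\overline{k}$, an explicit spanning set of $S_{\overline{k}}$ — typically a "PBW-type" family of ordered $\star$-products of the generators $x_{i,1}^d$ indexed by combinatorial data (compositions / certain tableaux) — whose cardinality matches the lower bound for $\dim\ddot U^+_{\overline{k}}$ coming from a known PBW basis of $\ddot U^+$ (for instance via the affine root system of $\widehat{\ssl}_n$ and the loop structure). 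Since $\Psi$ is surjective, equality of graded dimensions forces $\Psi$ to be an isomorphism in each graded component; the grading is respected because $\Psi(e_{i,k})$ has degree $(1_i;k)$ by construction.

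An alternative, more structural proof of injectivity uses the Hopf/Drinfeld-double framework of Section 1.4: the pairing $\varphi$ restricts to a nondegenerate pairing between $\ddot U^+$ and $\ddot U^-$ (when $q,qd,qd^{-1}$ are not roots of unity, cf.\ Theorem~\ref{Drinfeld double sln}(d)), and one can transport this to a nondegenerate pairing on $S$ via $\Psi$; any element of $\ker\Psi$ would then pair trivially with everything, contradicting nondegeneracy. This is essentially the route by which such shuffle-algebra isomorphisms are typically proven, and it has the advantage of not requiring an independent PBW basis on the shuffle side — only the nondegeneracy already recorded in the excerpt.

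The main obstacle is the surjectivity step, specifically proving that the specialization maps generate enough information to reconstruct an arbitrary shuffle element from lower-degree data — equivalently, that $S$ has no graded pieces "larger" than the subalgebra generated by degree-$1_i$ elements. This requires a careful analysis of which rational functions satisfying the pole and wheel conditions can arise, handling the interplay between the factors $\omega_{i,i\pm1}$ (off-diagonal, contributing the wheel locus) and $\omega_{i,i}$ (diagonal, contributing the $q^{-2}$ specialization as in the small case), and it is the place where the combinatorics of type $A^{(1)}_{n-1}$ genuinely enters. I expect this is exactly the content that Negut's Theorem~\ref{Negut theorem} proves in detail, and I would cite \cite{N2} for it rather than reprove it; given that citation, the remaining injectivity argument via nondegeneracy of $\varphi$ is short.
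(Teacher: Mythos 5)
The paper itself offers no proof of this statement: it is imported verbatim from \cite[Theorem 1.1]{N2}, so your decision to cite \cite{N2} for the essential content is exactly what the authors do. Your sketched outline (surjectivity via generation of $S$ by the one-dimensional pieces $S_{1_i,d}=\CC\cdot x_{i,1}^d$, injectivity via nondegeneracy of the Hopf pairing transported through $\Psi$) does match the broad strategy of Negut's actual argument; the one caveat worth recording is that the injectivity route is only non-circular if the nondegeneracy asserted in Theorem~\ref{Drinfeld double sln}(d) is established independently of the shuffle realization, since in several treatments that nondegeneracy is itself \emph{deduced} from the isomorphism $\Psi$.
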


\begin{rem}
  In the \emph{loc. cit.} $d=1$, but the proof can be easily modified for any $d$.
  Note that the algebra $\A^+$ from~\cite{N2} is isomorphic to our $S$ with $d=1$ via the  map $S_{\mid_{d=1}}\to \A^+$ given by
  $$F(\{x_{i,j}\}_{i\in [n]}^{1\leq j\leq k_i})\mapsto q^{\sum_{i=0}^{n-1} \frac{k_i(k_i-1)}{2}}F(\{z_{i,j}\}_{1\leq i\leq n}^{1\leq j\leq k_{i-1}})\cdot
    \prod_{i=1}^n\prod_{j\ne j'}\frac{z_{i,j}-z_{i,j'}}{q^{-1}z_{i,j}-qz_{i,j'}}\cdot \prod_{i=1}^n\prod _{j,j'}\frac{z_{i,j}-z_{i+1,j'}}{z_{i,j}-qz_{i+1,j'}}.$$
\end{rem}


\section{Subalgebras $\A(s_0,\ldots,s_{n-1})$}


\subsection{Key constructions}
$\  $

  In this section, we introduce the key objects of our paper, the commutative subalgebras of $S$, analogous to $\A^{\mathrm{sm}}\subset S^{\mathrm{sm}}$
 from Section~\ref{section_small_shuffle}.
 The new feature of our setup (in comparison to the small shuffle algebras) is that we get an $(n-1)$-parameter family of those.

  For any $0\leq \overline{l}\leq \overline{k}\in \ZZ_+^{[n]}$, $\xi\in \CC^*$ and $F\in S_{\overline{k}}$,
 we define $F^{\overline{l}}_\xi\in \CC(x_{0,1},\ldots,x_{n-1,k_{n-1}})$ by
  $$F^{\overline{l}}_\xi:=
    F(\xi\cdot x_{0,1},\ldots,\xi\cdot x_{0,l_0},x_{0,l_0+1},\ldots,x_{0,k_0};
      \ldots;\xi\cdot x_{n-1,1},\ldots,\xi \cdot x_{n-1,l_{n-1}}, x_{n-1,l_{n-1}+1},\ldots).$$
 For any integer numbers $a\leq b$, define the degree vector $\overline{l}:=[a;b]\in \ZZ_+^{[n]}$ by
  $$\overline{l}=(l_0,\ldots,l_{n-1}) \ \ \mathrm{with}\ \ l_i=\#\{c\in \ZZ|a\leq c\leq b\ \mathrm{and}\ c\equiv i\ (\mathrm{mod}\ n)\}.$$
 For such a choice of $\overline{l}$, we will denote $F^{\overline{l}}_\xi$ simply by $F^{(a,b)}_\xi$.

\begin{defn}
   For any $\overline{s}=(s_0,\ldots,s_{n-1})\in (\CC^*)^{[n]}$, consider a $\ZZ_+^{[n]}$-graded subspace
 $\A(\overline{s})\subset S$ whose degree $\overline{k}=(k_0,\ldots,k_{n-1})$
 component is defined by
   $$\Aa(\overline{s})_{\overline{k}}:=
     \left\{F\in S_{\overline{k},0}\mid \partial^{(\infty;a,b)}F=\prod_{i=a}^b s_i \cdot \partial^{(0;a,b)}F
     \ \ \forall\ a,b\in \ZZ\ \mathrm{such\ that}\ a\leq b \ \mathrm{and}\ [a;b]\leq \overline{k}\right\},$$
 where $\partial^{(\infty;a,b)}F:=\underset{\xi\to \infty}\lim F^{(a,b)}_\xi, \partial^{(0;a,b)}F:=\underset{\xi\to 0}\lim F^{(a,b)}_\xi$
 whenever these limits exist, $s_i:=s_{i\ \mathrm{mod}\ n}$.
\end{defn}

  A certain class of such elements is provided by the following result:

\begin{lem}\label{lemma 0}
  For any $k\in \NN,\ \mu\in \CC$, and $\overline{s}\in (\CC^*)^{[n]}$, define $F_k^{\mu}(\overline{s})\in  S_{k,\ldots,k}$ by
 $$F_k^{\mu}(\overline{s}):=
   \frac{\prod_{i\in [n]}\prod_{1\leq j\ne j'\leq k}(x_{i,j}-q^{-2}x_{i,j'})\cdot \prod_{i\in [n]}(s_0\ldots s_i \prod_{j=1}^k x_{i,j}-\mu\prod_{j=1}^k x_{i+1,j})}
   {\prod_{i\in [n]} \prod_{1\leq j,j'\leq k}(x_{i,j}-x_{i+1,j'})},$$
 where we set $x_{n,j}:=x_{0,j}$ as before.
 If $s_0\ldots s_{n-1}=1$, then $F_k^{\mu}(\overline{s})\in \A(\overline{s})$.
\end{lem}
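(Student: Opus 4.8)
The plan is to verify the two defining requirements of membership in $\A(\overline{s})_{k,\ldots,k}$ for $F:=F_k^\mu(\overline{s})$: first that $F\in S_{k,\ldots,k,0}$, i.e. it satisfies the pole and wheel conditions and has total degree $0$; second that the boundary condition $\partial^{(\infty;a,b)}F=\prod_{i=a}^b s_i\cdot \partial^{(0;a,b)}F$ holds for every admissible pair $a\le b$ with $[a;b]\le(k,\ldots,k)$. The hypothesis $s_0\cdots s_{n-1}=1$ will be used exactly to make the numerator's last product $\prod_{i\in[n]}(s_0\cdots s_i\prod_j x_{i,j}-\mu\prod_j x_{i+1,j})$ consistent around the cycle (the factor indexed by $i=n-1$ involves $s_0\cdots s_{n-1}=1$, matching the identification $x_{n,j}=x_{0,j}$).

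First I would check $F\in S_{k,\ldots,k}$. The denominator is exactly $\prod_{i\in[n]}\prod_{j,j'}(x_{i,j}-x_{i+1,j'})$, which is the denominator prescribed in the pole conditions, so it suffices to observe that the numerator is a Laurent polynomial, symmetric under $\prod\mathfrak{S}_k$ (clear: the first product is visibly $\prod\mathfrak{S}_k$-symmetric, and each factor $\prod_j x_{i,j}$ is symmetric in the $j$-index), so the pole conditions hold. Total degree: the numerator has degree $n k(k-1)$ from the first product plus $nk$ from the second (each factor contributes degree $k$), total $nk^2$; the denominator has degree $nk^2$; so $\mathrm{tot.deg}(F)=0$. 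For the wheel conditions, I would argue that whenever $x_{i,j_1}/x_{i+\epsilon,l}=qd^\epsilon$ and $x_{i+\epsilon,l}/x_{i,j_2}=qd^{-\epsilon}$, one checks the numerator vanishes: the relevant factor in $\prod_{1\le j\ne j'\le k}(x_{i,j}-q^{-2}x_{i,j'})$ is $(x_{i,j_1}-q^{-2}x_{i,j_2})$, and from the two wheel relations $x_{i,j_1}/x_{i,j_2}=(qd^\epsilon)(qd^{-\epsilon})=q^2$, so this factor vanishes (here one must note $j_1\ne j_2$, which holds since $x_{i,j_1}/x_{i,j_2}=q^2\ne1$ as $q$ is not a root of unity). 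This is the same mechanism as for $K_2$ in Theorem~\ref{comm_subalg_m}.

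Next, the boundary condition. Fix $a\le b$ with $\overline{l}:=[a;b]\le(k,\ldots,k)$, so each $l_i\in\{0,\ldots,k\}$, and the residue class count is $l_i=\#\{c:a\le c\le b,\ c\equiv i\}$. I would compute $\partial^{(0;a,b)}F$ and $\partial^{(\infty;a,b)}F$ directly from the formula. Rescale the first $l_i$ variables $x_{i,j}\mapsto \xi x_{i,j}$ in each color $i$, then take $\xi\to0$ and $\xi\to\infty$. Because $\mathrm{tot.deg}(F)=0$, both limits exist and are finite once one tracks the homogeneity in $\xi$ of each factor. In the second numerator product, the factor indexed by $i$ is $s_0\cdots s_i(\xi^{l_i}\prod_{j\le l_i}x_{i,j})(\prod_{j>l_i}x_{i,j})-\mu(\xi^{l_{i+1}}\prod_{j\le l_{i+1}}x_{i+1,j})(\prod_{j>l_{i+1}}x_{i+1,j})$; under $\xi\to0$ the dominant term is whichever of $l_i,l_{i+1}$ is smaller (or both survive if equal), and under $\xi\to\infty$ the dominant term is whichever is larger. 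The key combinatorial point: since $[a;b]$ consists of $n$ consecutive-residue blocks, the multiset $\{l_i\}$ has the property that going around the cycle $i\mapsto i+1$, the value $l_i$ is non-increasing except for exactly one "jump up" (and all differences are $0$ or $1$, or more precisely $l_{i+1}-l_i\in\{-1,0,1\}$ with the net change zero around the cycle). A clean way: each $\xi\to0$ limit picks, in factor $i$, the monomial with exponent $\min(l_i,l_{i+1})$; the $\xi\to\infty$ limit picks exponent $\max(l_i,l_{i+1})$. The ratio $\partial^{(\infty;a,b)}F/\partial^{(0;a,b)}F$ then telescopes: from the first product $\prod(x_{i,j}-q^{-2}x_{i,j'})$ the scaling contributions cancel between the two limits (this product is bilinear-homogeneous and contributes equally up to a sign/power that matches), and from the second product the surviving coefficients contribute, in each factor $i$, either $s_0\cdots s_i$ (if the $x_{i,\bullet}$-monomial wins, i.e. $l_i>l_{i+1}$ at $\infty$ versus $l_i<l_{i+1}$ at $0$) or $\mu$ or an interpolation; multiplying over all $i\in[n]$ and using $s_0\cdots s_{n-1}=1$ to close the cycle, the product of all the $s_0\cdots s_i$ factors that appear collapses to exactly $\prod_{i=a}^b s_i=\prod_{i:\,l_i\ \text{relevant}}s_i$ — more precisely to $s_a s_{a+1}\cdots s_b$ read as a product over residues with multiplicity. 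I would make this precise by induction on $b-a$, the base case $b=a$ (a single variable of color $a\bmod n$) being a one-line check that $\partial^{(\infty;a,a)}F=s_a\,\partial^{(0;a,a)}F$.

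The main obstacle I anticipate is the bookkeeping in the last paragraph: correctly identifying, for a general interval $[a;b]$, which monomial dominates in each of the $n$ factors of the second numerator product in the $\xi\to0$ and $\xi\to\infty$ limits, and then checking that the resulting product of $s$-factors is exactly $\prod_{i=a}^b s_i$ rather than some shifted or partial product. The cyclic identification $x_{n,j}=x_{0,j}$ together with $s_0\cdots s_{n-1}=1$ is what makes the telescoping close up; I would organize the computation so that the "jump" residue in the sequence $(l_i)$ is tracked explicitly (it corresponds to the residue of $a-1$, i.e. the place where the consecutive interval $[a,b]$ "starts"), and verify the two limits pick up complementary halves of each factor there. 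A secondary, purely routine point is confirming that no other factor of $F$ (the first numerator product, the denominator) contributes to the ratio $\partial^{(\infty;a,b)}F/\partial^{(0;a,b)}F$ beyond a harmless overall power of $\xi$ that is forced to be $\xi^0$ by $\mathrm{tot.deg}(F)=0$; this follows once one checks those factors are genuinely homogeneous under the rescaling, so their $\xi\to0$ and $\xi\to\infty$ limits differ only by a scalar that is independent of $\mu$ and $\overline{s}$ and hence must be $1$ by specializing, e.g., to the known small-shuffle case or by direct degree count.
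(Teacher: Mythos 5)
Your overall strategy coincides with the paper's: check membership in $S_{k\delta,0}$ directly, then compute $\partial^{(\infty;a,b)}F$ and $\partial^{(0;a,b)}F$ by identifying the dominant monomial of each factor under the partial rescaling, using that $l_{i+1}-l_i\in\{-1,0,1\}$ for interval-type degree vectors and that the coefficients telescope via $\prod_{i\in[n]}(s_0\cdots s_i)^{l_i-l_{i+1}}=\prod_{i\in[n]}s_i^{l_i}=s_a\cdots s_b$. However, two of your supporting claims are wrong, and they sit exactly at the step you yourself identify as the crux. First, the existence of the two limits is not ``forced by $\mathrm{tot.deg}(F)=0$'': total degree zero only controls the simultaneous rescaling of \emph{all} variables, and a partial rescaling of a degree-zero function can blow up (already $x/y$ under $x\mapsto\xi x$ does). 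The paper computes the net exponent of $\xi$ explicitly --- $\sum_i l_i(l_{i+1}-l_i-1)+\sum_i\max\{l_i,l_{i+1}\}$ as $\xi\to\infty$ and $\sum_i l_i(l_i-l_{i+1}-1)+\sum_i\min\{l_i,l_{i+1}\}$ as $\xi\to 0$ --- and verifies that both vanish for the $l_i$ coming from an interval $[a;b]$; this count cannot be replaced by the degree argument.

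Second, it is false that ``from the first product $\prod(x_{i,j}-q^{-2}x_{i,j'})$ the scaling contributions cancel between the two limits.'' For a pair $j\leq l_i<j'$, the two factors $(x_{i,j}-q^{-2}x_{i,j'})(x_{i,j'}-q^{-2}x_{i,j})$ contribute, up to the common constant $-q^{-2}$ and powers of $\xi$, the monomial $x_{i,j}^2$ to the $\xi\to\infty$ limit but $x_{i,j'}^2$ to the $\xi\to 0$ limit --- complementary sets of variables, not equal quantities; the denominator behaves the same way. Consequently the ratio $\partial^{(\infty;a,b)}F/\partial^{(0;a,b)}F$ is \emph{not} read off from the coefficients $s_0\cdots s_i$ versus $-\mu$ of the second numerator product alone: each limit carries a nontrivial Laurent-monomial prefactor in the $x_{i,j}$ (the factors $\prod_{j\leq l_i}x_{i,j}^{l_{i+1}+l_{i-1}-2l_i}$ and $\prod_{j>l_i}x_{i,j}^{2l_i-l_{i+1}-l_{i-1}}$ in the paper's $G_{\infty,i}$ and $G_{0,i}$), and the content of the lemma is precisely that, after multiplying over all colors $i$, these monomials cancel against the monomials $\prod_j x_{i,j}$ and $\prod_j x_{i+1,j}$ surviving from $s_0\cdots s_i\prod_j x_{i,j}-\mu\prod_j x_{i+1,j}$, leaving the constant $s_a\cdots s_b$. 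Your proposal defers this bookkeeping and offers in its place a shortcut that does not work; the paper's explicit formulas for $G$, $G_{\infty,i}$, $G_{0,i}$ are what actually close the argument. (A minor further slip: on the wheel locus $x_{i,j_1}=q^2x_{i,j_2}$ the vanishing factor is $(x_{i,j_2}-q^{-2}x_{i,j_1})$, not $(x_{i,j_1}-q^{-2}x_{i,j_2})$; this is harmless since the product runs over ordered pairs.)
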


\begin{proof}
 $\ $

  Without loss of generality, we can assume $\mu\ne 0, a=0, b=nr+c,\ 0\leq r\leq k-1, 0\leq c\leq n-1$.
 Then $l_0=\ldots=l_c=r+1$ and $l_{c+1}=\ldots=l_{n-1}=r$.
  As $\xi\to \infty$, the function $F_k^{\mu}(\overline{s})_\xi^{(a,b)}$ grows at
 the  speed $\xi^{\sum_{i\in [n]} l_i(l_{i+1}-l_i-1)+\sum_{i\in [n]}\mathrm{max}\{l_i,l_{i+1}\}}$,
 while as $\xi\to 0$, the function $F_k^{\mu}(\overline{s})_\xi^{(a,b)}$ grows at
 the  speed $\xi^{\sum_{i\in [n]} l_i(-l_{i+1}+l_i-1)+\sum_{i\in [n]} \mathrm{min}\{l_i,l_{i+1}\}}$.
  For the above values of $l_i$, both powers of $\xi$ are zero and
 hence both limits $\partial^{(\infty;a,b)}F_k^{\mu}(\overline{s})$ and $\partial^{(0;a,b)}F_k^{\mu}(\overline{s})$
 exist. Moreover, for $\alpha$ being $0$ or $\infty$, we have
 $\partial^{(\alpha;a,b)}F_k^{\mu}(\overline{s})=(-1)^{\sum_{i\in [n]} l_i(l_i-l_{i-1})}q^{-2\sum_{i\in [n]} l_i(k-l_i)}\cdot G\cdot \prod_{i\in [n]} G_{\alpha,i}$,
 where
 $$G=\frac{\prod_{i\in [n]}\prod_{1\leq j\ne j'\leq l_i} (x_{i,j}-q^{-2}x_{i,j'})\cdot\prod_{i\in [n]}\prod_{l_i< j\ne j'\leq k} (x_{i,j}-q^{-2}x_{i,j'})}
            {\prod_{i\in [n]}\prod_{1\leq j\leq l_i}^{1\leq j'\leq l_{i+1}}
            (x_{i,j}-x_{i+1,j'})\cdot \prod_{i\in [n]}\prod_{l_i< j \leq k}^{l_{i+1}<j'\leq k } (x_{i,j}-x_{i+1,j'})},$$
\[
  G_{\infty,i}=\prod_{j=1}^{l_i} x_{i,j}^{l_{i+1}+l_{i-1}-2l_i}\cdot
   \begin{cases}
    s_0\ldots s_i \prod_{j=1}^k x_{i,j}-\mu\prod_{j=1}^k x_{i+1,j} & \text{if } l_i=l_{i+1} \\
    s_0\ldots s_i \prod_{j=1}^k x_{i,j}   & \text{if } l_i>l_{i+1} \\
    -\mu\prod_{j=1}^k x_{i+1,j} & \text{if } l_i<l_{i+1}
   \end{cases},
\]
\[
  G_{0,i}=\prod_{j=l_i+1}^{k} x_{i,j}^{-l_{i+1}-l_{i-1}+2l_i}\cdot
   \begin{cases}
    s_0\ldots s_i \prod_{j=1}^k x_{i,j}-\mu\prod_{j=1}^k x_{i+1,j} & \text{if } l_i=l_{i+1} \\
    -\mu\prod_{j=1}^k x_{i+1,j} & \text{if } l_i>l_{i+1} \\
    s_0\ldots s_i \prod_{j=1}^k x_{i,j} & \text{if } l_i<l_{i+1}
   \end{cases}.
\]
  The equality $\partial^{(\infty;a,b)}F_k^{\mu}(\overline{s})=\prod_{i=0}^c s_i \cdot \partial^{(0;a,b)}F_k^{\mu}(\overline{s})$ follows,
 while $\mathrm{tot.deg}\ F_k^{\mu}(\overline{s})=0$.
\end{proof}


\subsection{Main result}
$\ $

  A collection $\{s_0,\ldots,s_{n-1}\}\subset \CC^*$ satisfying $s_0\ldots s_{n-1}=1$ is called \emph{generic} if and only if
 $$s_0^{\alpha_0}\ldots s_{n-1}^{\alpha_{n-1}}\in q^\ZZ \cdot d^\ZZ \Rightarrow \alpha_0=\ldots=\alpha_{n-1}.$$
 The main result of this section describes $\A(s_0,\ldots,s_{n-1})$ for such generic $n$-tuples $\{s_0,\ldots,s_{n-1}\}$.

\begin{thm}\label{main1}
  For a generic $\overline{s}=(s_0,\ldots,s_{n-1})$ satisfying $s_0\ldots s_{n-1}=1$, the
 space $\A(\overline{s})$ is shuffle-generated by $\{F_k^{\mu}(\overline{s})|k\in \NN, \mu\in \CC\}$.
 Moreover, $\A(\overline{s})$ is a polynomial algebra in free generators $\{F_k^{\mu_l}(\overline{s})|k\in \NN, 1\leq l\leq n\}$
 for arbitrary pairwise distinct $\mu_1,\ldots,\mu_n\in \CC$.
 In particular, $\A(\overline{s})$ is a commutative subalgebra of $S$.
\end{thm}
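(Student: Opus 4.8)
The plan is to combine two independent inputs: a \emph{commutativity} statement, which should follow from the Bethe-algebra interpretation promised in Section~3, and a \emph{structure} statement, which one proves by analysing the leading-term behaviour of shuffle products along the scaling limits $\partial^{(0;a,b)}$ and $\partial^{(\infty;a,b)}$. First I would record the elementary observation that $\A(\overline{s})$ is closed under $\star$: if $F\in\A(\overline{s})_{\overline{k}}$ and $G\in\A(\overline{s})_{\overline{m}}$, then both are of total degree $0$, hence so is $F\star G$, and the key point is that for every admissible $[a;b]\le\overline{k}+\overline{m}$ the limits $\partial^{(0;a,b)}(F\star G)$ and $\partial^{(\infty;a,b)}(F\star G)$ exist and differ by the factor $\prod_{i=a}^{b}s_i$. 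This is a ``leading-term of a $\Sym$'' computation: in the limit $\xi\to\infty$ (resp.\ $\xi\to 0$) the symmetrization over $\prod\mathfrak S_{k_i+m_i}$ collapses to a sum over which of the scaled variables come from the $F$-block and which from the $G$-block, each surviving term factoring as a product of a scaled piece of $F$, a scaled piece of $G$, and the mixing factors $\prod\omega_{i,i'}$; one checks that the $\omega$-factors involving one scaled and one unscaled variable tend to $1$, so the surviving terms reassemble into $\partial^{(\alpha;\cdot)}F\star\partial^{(\alpha;\cdot)}G$ applied to suitable sub-collections, and the scaling constants multiply correctly because $\prod_{i}s_i$ is multiplicative in $[a;b]$ and $s_0\cdots s_{n-1}=1$ makes the constant depend only on $b-a\bmod n$. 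This is exactly the argument used in~\cite{FS} for $\A^{\mathrm{sm}}$, and it is routine here.

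Next I would prove the generation statement. Set $\overline{k}=(k,\dots,k)$ first (the ``balanced'' gradings), since genericity of $\overline{s}$ forces $\A(\overline{s})_{\overline{k}}=0$ unless $\overline{k}$ is balanced: indeed, the relation $\partial^{(\infty;a,b)}F=(\prod_{i=a}^b s_i)\,\partial^{(0;a,b)}F$ together with the pole/wheel structure of $S_{\overline{k},0}$ pins down how the top and bottom $\xi$-degrees of $F^{(a,b)}_\xi$ compare, and unless $k_0=\dots=k_{n-1}$ one of the two limits vanishes while the scaling constant $\prod_{i=a}^{b}s_i$ is not a power of $q$ and $d$, a contradiction — this is the one place genericity is used in an essential way. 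For balanced $\overline{k}$ one argues by induction on $k$: given $F\in\A(\overline{s})_{(k,\dots,k)}$, evaluate the various $\partial^{(0;a,b)}F=\partial^{(\infty;a,b)}F\cdot(\prod s_i)^{-1}$ and observe that the resulting functions lie in lower balanced components of $\A(\overline{s})$, so by induction are $\star$-polynomials in the $F_j^{\mu}(\overline{s})$ with $j<k$; one then checks that a suitable $\star$-combination $\sum_l c_l\,F_k^{\mu_l}(\overline{s})\star(\text{lower stuff})$ matches $F$ along all the scaling limits, and that an element of $S_{(k,\dots,k),0}$ which has vanishing $\partial^{(0;a,b)}$ for all admissible $[a;b]$ must itself vanish (this last ``separation'' lemma, that the collection of limit maps is jointly injective on $S_{(k,\dots,k),0}$, is the technical heart — it is the higher-rank analogue of the corresponding step in~\cite{FS}). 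The dependence of $F_k^{\mu}(\overline{s})$ on $\mu$ is affine-linear in an appropriate sense, so $n$ distinct values $\mu_1,\dots,\mu_n$ give functions spanning the same space as all $F_k^{\mu}(\overline{s})$, $\mu\in\CC$, which is where the count ``$n$ generators in each balanced degree'' comes from.

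Finally, for algebraic independence and the polynomiality statement I would invoke the commutativity (so that $\star$-monomials in the $F_k^{\mu_l}(\overline{s})$ are unambiguous), assign to each generator $F_k^{\mu_l}(\overline{s})$ its degree $(k,\dots,k)$, and use a leading-term argument: order monomials by a lex-type order on the multiset of $(k,l)$ appearing, and show that the leading scaling-limit data of a monomial determines the monomial, so no nontrivial relation can hold; equivalently, compare graded dimensions of the subalgebra generated by the $F_k^{\mu_l}(\overline{s})$ with the dimension count of $\A(\overline{s})$ in each balanced degree coming from the induction above, and see they agree. The main obstacle I anticipate is the joint-injectivity / separation lemma for the limit maps on $S_{(k,\dots,k),0}$: one has to control simultaneously the pole structure forced by membership in $S$ and the many compatibility constraints, and unlike the rank-one case there are genuinely $n$ ``directions'' of degeneration to track, so bookkeeping the surviving boundary terms of each $\Sym$ is delicate. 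Everything else — closure, the explicit $\partial^{(\alpha;a,b)}F_k^{\mu}(\overline{s})$ computation (already done in Lemma~\ref{lemma 0}), and the final independence count — is then bookkeeping built on that lemma and on Theorem~\ref{Negut theorem} identifying $S$ with $\ddot U^+$.
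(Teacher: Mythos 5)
Your outline shares some raw ingredients with the paper (explicit generators, specialization maps, a dimension count for independence), but two of your load-bearing steps are gaps, and they are precisely the points where the paper's argument is structured differently.

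First, the ``elementary observation'' that $\A(\overline{s})$ is $\star$-closed is not available by the leading-term argument you describe. Membership in $\A(\overline{s})_{\overline{k}}$ only guarantees existence of the limits of $F^{\overline{l}}_\xi$ for degree vectors of the interval form $\overline{l}=[a;b]$, whereas in a summand of $(F\star G)^{(a,b)}_\xi$ the scaled variables landing in the $F$-slot form an \emph{arbitrary} $\overline{l}^1\le \overline{k}$ with $\overline{l}^1+\overline{l}^2=[a;b]$; nothing in the definition of $\A(\overline{s})$ controls $F^{\overline{l}^1}_\xi$ for such $\overline{l}^1$, so the individual summands need not have limits. (Also, the mixed $\omega$-factors do not tend to $1$; e.g.\ $\omega_{i,i}(x/(\xi y))\to q^{-2}$ as $\xi\to\infty$, and these constants are exactly what produce the factor $\prod s_i$.) The paper sidesteps this entirely: it never proves closure of $\A(\overline{s})$ directly, but only that the explicitly generated subalgebra $\A'(\overline{s})$ sits inside $\A(\overline{s})$ (Lemma~\ref{lemma 2}), which works because the explicit form of $F_k^{\mu}(\overline{s})$ gives growth estimates for \emph{every} $\overline{l}^t$, with the convexity inequality $\min(a,b)+\tfrac{a^2+b^2-a-b}{2}\ge ab$ killing all summands with $|l^t_i-l^t_{i+1}|>1$. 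Closure of $\A(\overline{s})$ is then a corollary of $\A(\overline{s})=\A'(\overline{s})$, not an input.

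Second, your ``separation lemma'' --- joint injectivity of the limit maps on $S_{k\delta,0}$, or on the proper limits within a balanced component --- is both unproven and false in the form you need: the elements $X_k$ of Proposition~\ref{Negut isomorphism}(b) are nonzero elements of $S_{k\delta,0}$ annihilated by all proper degeneration maps, so matching a candidate combination against $F$ ``along all scaling limits'' cannot by itself force equality. The paper replaces this entire inductive scheme by a two-sided dimension bound: an \emph{upper} bound $\dim\A(\overline{s})_{k\delta}\le\dim\mathcal{R}_k$ via a Gordon-type filtration by specializations $\phi_L$ at points $(qd)^{-a_t}y_t,\ldots,(qd)^{-b_t}y_t$ (Lemma~\ref{lemma 1}; this is also where genericity enters, forcing all intervals to have length divisible by $n$ and killing unbalanced components), and a matching \emph{lower} bound for $\A'(\overline{s})_{k\delta}$ via the Young-diagram specializations $\varphi_\lambda$ and the algebraic independence of $\prod_i(s_0\cdots s_iY_i-\mu_tY_{i+1})$ (Lemma~\ref{lemma 3}). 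Your proposal omits the upper-bound half altogether, and that is the engine of the proof. Finally, deferring commutativity to the Bethe-algebra realization of Section~3 inverts the paper's logic (Section~3 is presented as a consequence of Theorem~\ref{main1}); the paper proves commutativity self-containedly in Lemma~\ref{lemma 4} by showing $\varphi_{(2,1^{m_1+m_2-2})}(F_{m_1,m_2}^{\nu_1,\nu_2}-F_{m_2,m_1}^{\nu_2,\nu_1})=0$ through an induction that reduces to lower $F$'s with rescaled, non-generic parameters $\overline{s}'$ --- which is why Lemma~\ref{lemma 4}'s induction hypothesis is deliberately stated for arbitrary $\overline{s}'$ with $\prod s_i'=1$.
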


 The proof of this theorem will proceed in several steps.
  First, we will use an analogue of the \emph{Gordon filtration}
 from~\cite{FS}, further generalized in~\cite{N2} to prove Theorem~\ref{Negut theorem}, in order
 to obtain the upper bound on dimensions of $\A(\overline{s})_{\overline{k}}$.
  Next, we will show that the subalgebra $\A'(\overline{s})\subset S$, shuffle generated by all
 $F_k^{\mu}(\overline{s})$, belongs to $\A(\overline{s})$. We will use another
 filtration to argue that the dimension of $\A'(\overline{s})_{\overline{k}}$
 is at least as big as the upper bound for the dimension of
 $\A(\overline{s})_{\overline{k}}$, implying $\A'(\overline{s})=\A(\overline{s})$.
  Similar arguments will also imply the commutativity of $\A(\overline{s})$.

\begin{lem}\label{lemma 1}
  Consider the polynomial algebra $\mathcal{R}=\CC[T_{i,m}]^{m\geq 1}_{i\in [n]}$ with $\deg(T_{i,m})=m$. Then:

\noindent
 (a) For $\overline{k}=k\delta:=(k,\ldots,k)$, we have
     $\dim \A(\overline{s})_{\overline{k}}\leq \dim \mathcal{R}_{k}$.

\noindent
 (b) For $\overline{k}\notin\{0,\delta,2\delta,\ldots\}$, we have $\A(\overline{s})_{\overline{k}}=0$.
\end{lem}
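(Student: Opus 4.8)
The plan is to obtain an explicit upper bound for $\dim \A(\overline{s})_{\overline{k}}$ by constructing a filtration on the (already restricted) space $S_{\overline{k},0}$ — an analogue of the Gordon filtration from~\cite{FS} adapted to the quiver setting as in~\cite{N2} — and then to show that the associated graded pieces that are ``hit'' by $\A(\overline{s})_{\overline{k}}$ are controlled by the degree-$k$ part of $\mathcal{R}$ when $\overline{k}=k\delta$, and are trivial otherwise. Concretely, I would stratify the configuration space of the $x_{i,j}$ by ``clustering data'': for $F \in S_{\overline{k}}$, one records in what groups the variables degenerate (which ratios $x_{i,j}/x_{i',j'}$ tend to fixed powers of $q,d$ versus tending to $0$ or $\infty$) and reads off the leading term. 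The pole and wheel conditions force each such leading term to again lie in a shuffle space of the same type but of strictly smaller total size, and the defining condition of $\A(\overline{s})$ — that the $\xi\to\infty$ and $\xi\to 0$ specializations $\partial^{(\infty;a,b)}F$ and $\partial^{(0;a,b)}F$ differ only by the scalar $\prod_{i=a}^b s_i$ — gives strong linear constraints relating these leading terms along the two ``ends'' of each one-parameter degeneration.

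The key point where the genericity hypothesis enters is the following: the constraint $\partial^{(\infty;a,b)}F = \prod_{i=a}^{b} s_i\cdot \partial^{(0;a,b)}F$ is an equality of rational functions in which the left side is a leading term of $F$ at $\xi=\infty$ and the right side at $\xi=0$. Both sides are, up to monomial and $q,d$-power factors, the ``same'' smaller shuffle element, but the scalar $\prod_{i=a}^b s_i$ must match a monomial in $q$ and $d$ coming from the $\omega_{i,j}$-factors unless that smaller element vanishes. Because $(s_0,\ldots,s_{n-1})$ is generic, $\prod_{i=a}^b s_i \in q^{\ZZ}d^{\ZZ}$ is impossible unless the multiset $\{a,\ldots,b\}$ hits every residue mod $n$ the same number of times, i.e.\ unless $[a;b]$ is a multiple of $\delta$. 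This is precisely the mechanism that (i) kills $\A(\overline{s})_{\overline{k}}$ whenever $\overline{k}\notin\ZZ_{\geq 0}\cdot\delta$ — since then some $[a;b]\le\overline{k}$ is not a multiple of $\delta$, and the corresponding degeneration forces all leading terms, hence $F$ itself, to vanish — proving part (b); and (ii) in the case $\overline{k}=k\delta$, restricts the surviving strata to those indexed by ``rectangular'' clustering patterns, which are naturally enumerated by the monomials of degree $k$ in the generators $T_{i,m}$ of $\mathcal{R}$, giving part (a).

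So concretely I would: first set up the degeneration/leading-term map and check (using only the pole and wheel conditions, exactly as in~\cite[proof of Thm~1.1]{N2}) that it is injective on $S_{\overline{k}}$ after iterating; second, impose the $\A(\overline{s})$-conditions and use genericity to cut the index set of surviving leading terms down to the rectangular ones; third, match these with a basis of $\mathcal{R}_k$ (resp.\ with the empty set) by a direct combinatorial bijection, and conclude the dimension inequality. The main obstacle I anticipate is bookkeeping in the second step: one has to verify that the $\xi\to\infty$ and $\xi\to 0$ leading terms appearing in the definition of $\A(\overline{s})$ can indeed be computed from the same clustering stratum, so that the scalar $\prod_{i=a}^b s_i$ really is forced to be a monomial in $q,d$; handling the overlaps between different choices of $(a,b)$ (nested and crossing intervals) and making sure no extra relations are needed to close the argument is the delicate part. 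Everything else — the shuffle-closedness bookkeeping and the final enumeration — is routine once the filtration is in place.
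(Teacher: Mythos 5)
Your overall strategy coincides with the paper's: specialization maps $\phi_L$ indexed by partitions of $\overline{k}$ into integer intervals (the Gordon-filtration analogue from \cite{FS}, \cite{N2}), together with the observation that the condition $\partial^{(\infty;a,b)}F=\prod_{i=a}^{b}s_i\cdot\partial^{(0;a,b)}F$, applied to a nonvanishing leading term, forces $\prod_{i=a}^{b}s_i\in q^{\ZZ}d^{\ZZ}$ and hence, by genericity, forces every interval of a surviving partition to have length divisible by $n$. That mechanism is exactly the one the paper uses; it correctly yields part (b) and the classification of surviving strata needed for part (a).

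The genuine gap is in part (a): knowing that only partitions $L$ of $k\delta$ into intervals of lengths $nc_1,\ldots,nc_r$ can survive does not by itself give $\dim\A(\overline{s})_{k\delta}\leq\dim\mathcal{R}_k$. You also need each image $\phi_L(\A(\overline{s})^L_{k\delta})$ to be at most one-dimensional, and your proposal asserts the enumeration by monomials of $\mathcal{R}_k$ without supplying this; a priori $\phi_L(F)$ ranges over a large space of Laurent polynomials in $y_1,\ldots,y_r$. The paper pins it down by a degree count: (i) $\mathrm{tot.deg}(\phi_L(F))=\sum_i k_ik_{i+1}$ exactly, because $\mathrm{tot.deg}(F)=0$; (ii) $\deg_{y_t}(\phi_L(F))$ is bounded above using the \emph{existence} of the limit $\partial^{(\infty;a_t,b_t)}F$; (iii) the wheel conditions together with the vanishing of $\phi_{L'}(F)$ for all $L'>L$ force divisibility by an explicit polynomial $Q_L$; and (iv) the quotient $\phi_L(F)/Q_L$ then has total degree equal to the sum of its individual $y_t$-degree upper bounds, so it must be a single monomial $\nu\cdot\prod_t y_t^{\sum_i l^t_il^t_{i+1}}$. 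Without step (iv) the upper bound in (a) does not close, and note that the degree bounds in (ii) come from the membership $F\in\A(\overline{s})$ itself, not merely from the pole and wheel conditions as your sketch suggests. The remaining ingredients of your plan --- the joint injectivity of the $\phi_L$ and the bijection between surviving equivalence classes of partitions and monomials $\prod_t T_{a_t\bmod n,\,c_t}$ of degree $k$ --- match the paper and are fine.
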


\begin{proof}
 $\ $

   An unordered set $L$ of integer intervals $\{[a_1,b_1],\ldots, [a_r,b_r]\}$ is
 called a \emph{partition} of $\overline{k}\in \ZZ_+^{[n]}$ (denoted by $L\vdash \overline{k}$) if $\overline{k}=[a_1;b_1]+\cdots+[a_r;b_r]$.
 We order the elements of $L$ so that $b_1-a_1\geq b_2-a_2\geq \cdots \geq b_r-a_r$.
 The two sets $L$ and $L'$ as above are said to be equivalent if $|L|=|L'|$, and we
 can order their elements so that $b_i'-b_i=a_i'-a_i=nc_i$ for all $i$ and some $c_i\in \ZZ$.
  Note that the collection of $L\vdash\overline{k}$,
 up to the above equivalence, is finite for any $\overline{k}\in \ZZ_+^{[n]}$.
  Finally, we say $L'>L$ if there exists $s$, such that
 $b'_s-a'_s>b_s-a_s$ and $b'_t-a'_t=b_t-a_t$ for $1\leq t\leq s-1$.

  Any $L\vdash \overline{k}$ defines a linear map $\phi_L: \A(\overline{s})_{\overline{k}}\to \CC[y_1^{\pm 1},\ldots, y_r^{\pm 1}]$
 as follows. Split the variables $\{x_{i,j}\}$ in $r$ groups, each group corresponding to
 one of the intervals in $L$. Specialize the
 variables corresponding to
 the interval $[a_t,b_t]$ to
 $(qd)^{-a_t}\cdot y_t,\ldots,(qd)^{-b_t}\cdot y_t$ in the natural order.
  For
   $$F=\frac{f(x_{0,1},\ldots,x_{n-1,k_{n-1}})}{\prod_{i\in [n]}\prod_{1\leq j\leq k_i}^{1\leq j'\leq k_{i+1}} (x_{i,j}-x_{i+1,j'})}\in
     \A(\overline{s})_{\overline{k}},$$
 define $\phi_L(F)$ as the corresponding specialization of $f$.
 The result is independent of our splitting of variables since $f$
 is symmetric.
  Finally, we define the filtration
 on $\A(\overline{s})_{\overline{k}}$ by
 $$\A(\overline{s})_{\overline{k}}^L:=\underset{L'>L}\bigcap \mathrm{Ker}(\phi_{L'}).$$

  Let us now consider the images $\phi_L(\A(\overline{s})^L_{\overline{k}})$ for any $L\vdash \overline{k}$.
 For $F\in \A(\overline{s})^L_{\overline{k}}$, we have:

\noindent
 $\circ$ The total degree $\mathrm{tot.deg}(\phi_L(F))=\sum_{i\in [n]} k_ik_{i+1}$, since
     $\mathrm{tot.deg} (F)=0$.

\noindent
 $\circ$ For each $1\leq t\leq r$, the degree of $\phi_L(F)$ with respect to $y_t$ is bounded by
     $$\deg_{y_t}(\phi_L(F))\leq \sum_{i\in [n]} (l^t_{i}(k_{i-1}+k_{i+1})-l^t_il^t_{i+1})$$
      due to the existence of the limit $\partial^{(\infty;a_t,b_t)}F$ (here $\overline{l}^t:=[a_t;b_t]\in \ZZ^{[n]}_+$ for $1\leq t\leq r$).

  On the other hand, the wheel conditions for $F$ guarantee that $\phi_L(F)(y_1,\ldots,y_r)$
 becomes zero under the following specializations:

 (i) $(qd)^{-x'}y_v=(q/d)(qd)^{-x}y_u$ for any $1\leq u<v\leq r,\ a_u\leq x<b_u, a_v\leq x'\leq b_v,\ x'\equiv x+1,$

 (ii) $(qd)^{-x'}y_v=(d/q)(qd)^{-x}y_u$ for any $1\leq u<v\leq r,\ a_u< x\leq b_u, a_v\leq x'\leq b_v,\ x'\equiv x-1.$

  Finally, the conditions $\phi_{L'}(F)=0$ for any $L'>L$ guarantee
 that $\phi_L(F)(y_1,\ldots,y_r)$ becomes zero under the following specializations:

 (iii) $(qd)^{-x'}y_v=(qd)^{-b_u-1}y_u$ for any $1\leq u<v\leq r,\ a_v\leq x'\leq b_v,\ x'\equiv b_u+1,$

 (iv) $(qd)^{-x'}y_v=(qd)^{-a_u+1}\cdot y_u$ for any $1\leq u<v\leq r,\ a_v\leq x'\leq b_v,\ x'\equiv a_u-1.$

  In particular, we see that
 $\phi_L(F)$ is divisible by $Q_L\in \CC[y_1,\ldots,y_r]$, defined as
 a product of the linear terms in $y_t$ coming from (i)--(iv)
 (if some of these coincide, we still count them with the correct multiplicity).
 Note that
  $$\mathrm{tot.deg}(Q_L)=\sum_{1\leq u<v\leq r}\sum_{i\in [n]} (l^u_i l^v_{i+1}+l^u_i l^v_{i-1})=
    \sum_{i\in [n]} k_ik_{i+1}-\sum_{t=1}^r\sum_{i\in [n]} l^t_i l^t_{i+1},$$
 while the degree with respect to each variable $y_t\ (1\leq t\leq r)$ is given by
  $$\deg_{y_t}(Q_L)=\sum_{i\in [n]} (l^t_i (k_{i-1}+k_{i+1})-2l^t_il^t_{i+1}).$$
 Define $r_L:=\phi_L(F)/Q_L\in \CC[y_1^{\pm 1},\ldots, y_r^{\pm 1}]$. Then:
  $$\mathrm{tot.deg}(r_L)=\sum_{t=1}^r\sum_{i\in [n]} l^t_i l^t_{i+1}\ \mathrm{and}\ \deg_{y_t}(r_L)\leq \sum_{i\in [n]} l^t_i l^t_{i+1}.$$
 Hence, $r_L=\nu\cdot \prod_{t=1}^r y_t^{\sum_{i\in [n]} l^t_i l^t_{i+1}}$ for some $\nu\in \CC$, so that
 $\phi_L(F)=\nu\cdot \prod_{t=1}^r y_t^{\sum_{i\in [n]} l^t_i l^t_{i+1}}\cdot Q_L$.
  On the other hand, applying the above specialization to the entire function $F$ rather than $f$, we get
 $\phi_L(F)/Q$, where $Q\in \CC[y_1,\ldots,y_r]$ is given by
  $$Q=\nu'\cdot \prod_{t=1}^r y_t^{\sum_{i\in [n]} l^t_i l^t_{i+1}}\cdot
    \prod_{1\leq u< v\leq r}\prod_{a_u\leq x\leq b_u}\prod_{a_v\leq x'\leq b_v}^{x'\equiv x\pm 1} ((qd)^{-x}y_u-(qd)^{-x'}y_v)\ \mathrm{for\ some}\ \nu'\in \CC^*.$$
 The condition $F\in \A(\overline{s})$ implies
  $$\underset{\xi\to \infty}\lim \left(\frac{\phi_L(F)}{Q}\right)_{\mid y_t\mapsto \xi \cdot y_t}=
    s_{a_t}\ldots s_{b_t}\cdot \underset{\xi\to 0}\lim \left(\frac{\phi_L(F)}{Q}\right)_{\mid y_t\mapsto \xi \cdot y_t} \ \ \forall\ 1\leq t\leq r.$$
 For $\nu\ne 0$, this equality enforces $s_{a_t}\ldots s_{b_t}\in q^\ZZ\cdot d^\ZZ$.
  Due to our condition on $\{s_i\}$, we get $b_t-a_t+1=nc_t$ for
 every $1\leq t \leq r$ and some $c_t\in \NN$.
  The claim (ii) of the lemma is now obvious, while
 part (i) of the lemma follows from the inequality
 $\dim \A(\overline{s})_{\overline{k}}\leq \sum \dim \phi_L(\A(\overline{s})^L_{\overline{k}})$,
 where the last sum is taken over all equivalence classes of $L\vdash \overline{k}$.
\end{proof}

\begin{lem}\label{lemma 2}
  Let $\A'(\overline{s})$ be the subalgebra of $S$ generated
 by $\{F_k^{\mu}(\overline{s})\}_{k\geq 1}^{\mu\in \CC}$. Then $\A'(\overline{s})\subset \A(\overline{s})$.
\end{lem}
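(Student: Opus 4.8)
The plan is to show that $\A'(\overline{s})$ is contained in $\A(\overline{s})$ by reducing to the generators and then checking that the defining limit conditions of $\A(\overline{s})$ are compatible with the shuffle product. Since Lemma~\ref{lemma 0} already establishes $F_k^\mu(\overline{s})\in \A(\overline{s})$ for all $k\ge 1,\ \mu\in\CC$, what remains is the closure of $\A(\overline{s})$ under the $\star$-product: that is, if $F\in \A(\overline{s})_{\overline{k}}$ and $G\in \A(\overline{s})_{\overline{m}}$, then $F\star G\in \A(\overline{s})_{\overline{k}+\overline{m}}$. Granting this, a trivial induction on the number of factors gives $\A'(\overline{s})\subset \A(\overline{s})$.

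\medskip

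First I would record that $\A(\overline{s})_{\overline{k}}\subset S_{\overline{k},0}$ by definition, and that $\star$ is additive on the total degree, so $F\star G$ has total degree $0$; hence the only thing to verify is the matching of the $\xi\to 0$ and $\xi\to\infty$ limits with the prescribed ratio $\prod_{i=a}^b s_i$ for every admissible interval $[a;b]\le \overline{k}+\overline{m}$. The key computation is to track how the limit operators $\partial^{(\alpha;a,b)}$ interact with the symmetrization $\Sym_{\prod\mathfrak{S}_{k_i+m_i}}$ that defines $F\star G$. One expands $F\star G$ as a sum over coset representatives distributing the rescaled variables $\xi\cdot x_{i,j}$ (those indexed by $[a;b]$) between the $F$-block and the $G$-block; for each such distribution one gets a sub-interval $[a;b]=\overline{l}^F+\overline{l}^G$ split among the two factors, together with the cross-term $\prod \omega_{i,i'}(x_{i,j}/x_{i',j'})$ whose arguments either stay finite or tend to $0$ or $\infty$ with $\xi$. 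Because each $\omega_{i,i'}(z)\to 1$ as $z\to 0$ and as $z\to\infty$ (each $\omega$ is a ratio of linear polynomials with equal leading behavior, up to the constants $d^{-1}$ for $\omega_{i,i+1}$ — here one must be a little careful: $\omega_{i,i+1}(z)=\frac{d^{-1}z-q}{z-1}\to d^{-1}$ as $z\to\infty$ and $\to q$ as $z\to 0$, while $\omega_{i,i-1}\to 1$ as $z\to\infty$ and $\to q$ as $z\to 0$, and $\omega_{i,i}\to 1,\ q^{-2}$ respectively), the total contribution of all cross-$\omega$ factors between a rescaled and a non-rescaled variable produces a monomial constant in $\xi$ times a fixed rational function, and the precise constant cancels between the $0$- and $\infty$-limits in exactly the same way it does for the generators in Lemma~\ref{lemma 0}. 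The upshot is the factorization $\partial^{(\alpha;a,b)}(F\star G) = \sum_{\overline{l}^F+\overline{l}^G=[a;b]} c_\alpha\cdot \big(\partial^{(\alpha;a^F,b^F)}F\big)\star \big(\partial^{(\alpha;a^G,b^G)}G\big)$ for the appropriate induced intervals, with the \emph{same} combinatorial coefficients $c_\alpha$ for $\alpha=0$ and $\alpha=\infty$ on each summand, so that applying the hypotheses $\partial^{(\infty;\cdot)}F = \prod s_i\cdot\partial^{(0;\cdot)}F$ and likewise for $G$ term by term yields $\partial^{(\infty;a,b)}(F\star G)=\prod_{i=a}^b s_i\cdot \partial^{(0;a,b)}(F\star G)$, using the multiplicativity $\prod_{i=a^F}^{b^F}s_i\cdot\prod_{i=a^G}^{b^G}s_i=\prod_{i=a}^{b}s_i$.

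\medskip

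\textbf{The main obstacle} is the bookkeeping in the paragraph above: one must check that the coset representatives in the symmetrization split cleanly into a rescaled part and a non-rescaled part so that $\partial^{(\alpha;a,b)}$ truly distributes over the $\star$-product, and one must verify that the $\xi$-powers coming from (a) the variable rescalings inside $F$ and $G$, (b) the cross-$\omega$ factors between the two blocks, and (c) the $\omega$-factors connecting rescaled to non-rescaled variables \emph{within} the definition of $\partial^{(\alpha;a,b)}$ on $F\star G$, all combine to give a finite nonzero limit with the stated $\alpha$-independence. This is essentially the same limit analysis already carried out for a single generator in the proof of Lemma~\ref{lemma 0}, now applied blockwise; the existence of the limits for $F$ and $G$ separately (guaranteed by $F\in\A(\overline{s}),\ G\in\A(\overline{s})$) feeds directly into the existence of the limit for $F\star G$, and the ratio condition propagates through because it holds on each factor and the interfering constants are $\alpha$-symmetric. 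I would present this as: (1) reduce to $\star$-closedness; (2) expand $(F\star G)^{(a,b)}_\xi$ over coset representatives; (3) isolate the dominant term(s) as $\xi\to 0,\infty$ and compute the limiting monomial constants in each $\omega$; (4) observe that the $0$- and $\infty$-constants differ by a factor independent of $F,G$ (it only depends on $\overline{l}^F,\overline{l}^G$ and $q,d$) which therefore cancels when forming the ratio; (5) conclude via the hypotheses on $F$ and $G$ and the multiplicativity of $s\mapsto\prod_{i=a}^b s_i$.
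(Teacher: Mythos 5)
Your reduction of the lemma to the claim that $\A(\overline{s})$ is $\star$-closed contains a genuine gap, and it is precisely the point where the paper takes a different (and necessary) route. When you expand $(F\star G)^{(a,b)}_\xi$ over coset representatives, the rescaled variables indexed by $[a;b]$ get distributed between the $F$-block and the $G$-block in \emph{all} possible ways compatible with the colors: the resulting splitting $[a;b]=\overline{l}^F+\overline{l}^G$ is a splitting of degree vectors, and $\overline{l}^F,\overline{l}^G$ are in general \emph{not} of interval type $[a';b']$ (e.g.\ for $n=3$ the interval $[0;2]=(1,1,1)$ can split as $(1,0,1)+(0,1,0)$, and $(1,0,1)$ is not $[a';b']$ for any $a'\leq b'$). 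The definition of $\A(\overline{s})$ only controls the limits $\partial^{(\alpha;\overline{l})}F$ for interval-type $\overline{l}$; for a general element $F\in\A(\overline{s})$ the limits along non-interval rescalings need not exist, let alone satisfy any ratio condition. So your proposed factorization $\partial^{(\alpha;a,b)}(F\star G)=\sum c_\alpha\,(\partial^{(\alpha;a^F,b^F)}F)\star(\partial^{(\alpha;a^G,b^G)}G)$ is not available, and the hypotheses on $F$ and $G$ cannot be ``fed in term by term.'' (That $\A(\overline{s})$ is $\star$-closed is true for generic $\overline{s}$, but only a posteriori, as a consequence of the full Theorem~\ref{main1}.)

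The paper avoids this by never working with abstract elements of $\A(\overline{s})$: it proves directly that each explicit product $F_{k_1}^{\mu_1}(\overline{s})\star\cdots\star F_{k_r}^{\mu_r}(\overline{s})$ lies in $\A(\overline{s})$. For the explicit generators one can analyze an \emph{arbitrary} partial rescaling $\overline{l}^t$ of $F_{k_t}^{\mu_t}(\overline{s})$ using the formulas from the proof of Lemma~\ref{lemma 0}: the growth exponents as $\xi\to\infty$ and $\xi\to 0$ are respectively nonpositive and nonnegative (via the inequality $\min(a,b)+\tfrac{a^2+b^2-a-b}{2}\geq ab$), with equality exactly when $|l^t_i-l^t_{i+1}|\leq 1$ for all $i$; hence the ``unbalanced'' distributions contribute zero to both limits, and for the balanced ones the ratio is computed explicitly and collapses to $s_a\ldots s_b$ because $\sum_i(l^t_i-l^t_{i+1})=0$ kills the $\mu_t$-dependence. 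None of this information is deducible from mere membership in $\A(\overline{s})$. A secondary point: your observation that the cross-factors $\omega_{i,i'}$ have different limits at $0$ and at $\infty$ is correct, and the claim that their total contribution to the ratio cancels does hold (the $q$- and $d$-exponents telescope to zero over the cyclic index $i$), but this requires a short computation that you assert rather than perform. The fix is to abandon the ``$\star$-closedness of $\A(\overline{s})$'' framing and argue on the explicit products of generators, as the paper does.
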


\begin{proof}
  $\ $

  It suffices to show
 $F_{k_1,\ldots,k_r}^{\mu_1,\ldots,\mu_r}(\overline{s}):=F_{k_1}^{\mu_1}(\overline{s})\star \cdots \star F_{k_r}^{\mu_r}(\overline{s})\in \A(\overline{s})$
 for any $r,\ k_i\geq 1$, and $\mu_i\in \CC^*$. The case of $r=1$ has
 been already treated in Lemma~\ref{lemma 0}. The arguments for
 general $r$ are similar. Choose any $a\leq b$, such that $[a;b]\leq
 k\delta$, where $k:=k_1+\cdots+k_r$. We can further assume $a=0$.
  Let us consider any summand from the definition of $F_{k_1,\ldots,k_r}^{\mu_1,\ldots,\mu_r}(\overline{s})$
 with $\overline{l}:=[a;b]$ variables being multiplied by $\xi$.
  We will check that as $\xi$ tends to $\infty$ or $0$, both limits exist and differ by the constant $s_a\ldots s_b$.

  For a fixed summand as above, define $\{\overline{l}^t\}_{t=1}^r\in \ZZ_+^{[n]}$
 satisfying $\overline{l}=\overline{l}^1+\cdots+\overline{l}^r$
 by considering those variables $x_{i,j}$ which are multiplied by
 $\xi$ and get substituted into $F_{k_t}^{\mu_t}(\overline{s})$.
  Following the proof of Lemma~\ref{lemma 0},
 the function $F_{k_t}^{\mu_t}(\overline{s})_\xi^{\overline{l}^t}$ grows at
 the  speed $\xi^{\sum_{i\in [n]} l^t_i(l^t_{i+1}-l^t_i-1)+\sum_{i\in [n]}\mathrm{max}\{l^t_i,l^t_{i+1}\}}$ as $\xi\to \infty$
 and at the  speed $\xi^{\sum_{i\in [n]} l^t_i(-l^t_{i+1}+l^t_i-1)+\sum_{i\in [n]} \mathrm{min}\{l^t_i,l^t_{i+1}\}}$ as $\xi\to 0$.
  To estimate these powers, we note that $(a-b)(a-b-1)\geq 0$ for any $a,b\in \ZZ$, implying
   $$\mathrm{min}(a,b)+\frac{a^2+b^2-a-b}{2}\geq ab$$
 with equality holding if and only if $a-b\in \{-1,0,1\}$. Therefore,
 $$\sum_{i\in [n]} l^t_i(l^t_{i+1}-l^t_i-1)+\sum_{i\in [n]}\mathrm{max}\{l^t_i,l^t_{i+1}\}\leq 0\leq
   \sum_{i\in [n]} l^t_i(-l^t_{i+1}+l^t_i-1)+\sum_{i\in [n]} \mathrm{min}\{l^t_i,l^t_{i+1}\},$$
 with equalities holding if and only if $l^t_i-l^t_{i+1}\in \{\pm 1,0\}$ for any $i\in [n]$.
  Since the limits of
   $$\omega_{i,j}(\xi\cdot x,y),\ \omega_{i,j}(x,\xi \cdot y),\ \omega_{i,j}(\xi\cdot x,\xi \cdot y)\ \mathrm{as}\ \xi \to 0,\infty\ \mathrm{exist}\
     \forall\ i,j\in [n],$$
 the limits of the corresponding summands in the symmetrization are
 well defined as either $\xi\to 0,\infty$. Moreover, they are both
 zero if $|l^t_i-l^t_{i+1}|>1$ for some $1\leq t\leq r, i\in [n]$.

  Assuming finally that $|l^t_i-l^t_{i+1}|\leq 1$ for any $t,i$,
 the formulas from the proof of Lemma~\ref{lemma 0} imply that the
 ratio of the limits as $\xi$ goes to $\infty$ and $0$ equals to
  $$\prod_{t=1}^r\prod_{i\in [n]}\left(\frac{s_0\ldots s_i}{-\mu_t}\right)^{l^t_i-l^t_{i+1}}=\prod_{i\in [n]} (s_0\ldots s_i)^{l_i-l_{i+1}}=s_a\ldots s_b.$$
 The result follows.
\end{proof}

\begin{lem}\label{lemma 3}
  For any $k\in \NN$, we have $\dim \A'(\overline{s})_{k\delta}\geq \dim \mathcal{R}_{k}$.
\end{lem}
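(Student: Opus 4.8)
The plan is to produce enough explicit, linearly independent elements of $\A'(\overline{s})_{k\delta}$ to match $\dim \mathcal{R}_k$, where $\mathcal{R}_k$ is spanned by monomials $\prod_{i\in[n]}\prod_{m\geq 1} T_{i,m}^{c_{i,m}}$ with $\sum_{i,m} m\,c_{i,m}=k$. Since $\A'(\overline{s})$ is shuffle-generated by the $F_k^\mu(\overline{s})$, a natural spanning family for $\A'(\overline{s})_{k\delta}$ is given by the star-products $F_{k_1}^{\mu_1}(\overline{s})\star\cdots\star F_{k_r}^{\mu_r}(\overline{s})$ with $k_1+\cdots+k_r=k$. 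The first step is therefore to choose, for each composition $(k_1,\ldots,k_r)$ of $k$, a suitable finite set of parameter-tuples $(\mu_1,\ldots,\mu_r)$ and argue that the resulting collection is linearly independent — its cardinality being arranged to equal $\dim \mathcal{R}_k$. A clean way to organize the count: fix $n$ pairwise distinct scalars $\mu_1,\ldots,\mu_n$ and consider products of the form $\star_{l=1}^{n}\big(\star_{j}F_{m_{l,j}}^{\mu_l}(\overline{s})\big)$; the number of such (with total degree $k$) is exactly the number of $n$-tuples of partitions of total size $k$, which is $\dim \mathcal{R}_k$ (identifying the $\mu_l$-block with the variable index $i$).

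The second and central step is to prove linear independence. Here I would reuse the \emph{Gordon-type filtration} $\{\phi_L\}$ from the proof of Lemma~\ref{lemma 1}, now as a tool \emph{from below}: for a partition $L=\{[a_1,b_1],\ldots,[a_r,b_r]\}\vdash k\delta$ with all $b_t-a_t+1 = n c_t$ (the only surviving shape by Lemma~\ref{lemma 1}(ii)), one computes $\phi_L$ applied to a star-product $F_{k_1}^{\mu_1}(\overline{s})\star\cdots$. The key computation is that $\phi_L\big(F_{k_1}^{\mu_1}(\overline{s})\star\cdots\star F_{k_r}^{\mu_r}(\overline{s})\big)$, up to the universal factor $Q_L$ forced by pole and wheel conditions (computed exactly as in Lemma~\ref{lemma 1}), is a nonzero scalar times $\prod_t y_t^{\sum_i l^t_i l^t_{i+1}}$ whose coefficient is a product over $t$ of elementary symmetric-type expressions in the $\mu$'s assigned to the interval $[a_t,b_t]$. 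Because the specialization in $\phi_L$ sets $x_{i,j}\mapsto (qd)^{-\cdot}y_t$ in geometric progression along each interval, each factor $F_{k_s}^{\mu_s}(\overline{s})$ contributes (after cancellation of its denominator against the Gordon product) a clean "evaluation" of its numerator polynomial $s_0\cdots s_i\prod x_{i,j}-\mu_s\prod x_{i+1,j}$ at these geometric points, which is a polynomial in $\mu_s$. Matching leading coefficients across the different partition shapes $L$ then triangularizes the transition matrix between our star-product family and a "monomial" basis indexed by $n$-tuples of partitions, with the diagonal entries being nonzero products of Vandermonde-type determinants in the distinct $\mu_l$ — hence invertible.

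The third step is bookkeeping: to conclude $\dim \A'(\overline{s})_{k\delta}\geq \dim\mathcal{R}_k$ it suffices that the $\dim\mathcal{R}_k$ chosen star-products are linearly independent, which follows once the transition matrix above is shown to be (block-)triangular with nonzero diagonal. One should be slightly careful that $F_m^\mu(\overline{s})$ for \emph{fixed} $m$ and varying $\mu$ spans only a $2$-dimensional space (it is affine-linear in $\mu$, with the two "endpoints" $\mu=0,\infty$ corresponding to the degenerate $G_{\alpha,i}$ cases), so the correct statement is that $F_m^{\mu_1}(\overline{s}),\ldots,F_m^{\mu_n}(\overline{s})$ for $n\geq 2$ distinct values already generate, under $\star$, a space of the right dimension — this is why the theorem allows \emph{arbitrary} pairwise distinct $\mu_1,\ldots,\mu_n$. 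Concretely: expanding $F_m^\mu(\overline{s}) = A_m(\overline{s}) - \mu B_m(\overline{s})$ for fixed $m$, the ordered products $\star_{l}\star_{j} F_{m_{l,j}}^{\mu_{l}}(\overline{s})$ expand multilinearly, and picking out the coefficient of a fixed monomial in the $\mu_l$ recovers an arbitrary product of $A$'s and $B$'s; these in turn are enough to separate all partition-shapes via $\phi_L$.

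The main obstacle I anticipate is the explicit evaluation of $\phi_L$ on a star-product: unlike Lemma~\ref{lemma 1}, where one only needed \emph{upper} bounds on $\deg_{y_t}$ and divisibility by $Q_L$, here one needs the \emph{exact} leading coefficient to be nonzero, which requires tracking how the symmetrization in the definition of $\star$ interacts with the geometric-progression specialization — many summands vanish (those where wheel conditions kick in), and one must verify that the surviving summand(s) produce a manifestly nonzero contribution. I expect this to reduce, as in Lemma~\ref{lemma 0} and Lemma~\ref{lemma 2}, to the observation that only the "staircase" configurations $|l^t_i - l^t_{i+1}|\le 1$ survive in the limit, and for those the contribution factorizes; the genericity of $\overline{s}$ then ensures no accidental cancellation between the $s$-monomial and $q^{\ZZ}d^{\ZZ}$. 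Once that computation is in hand, the triangularity and the Vandermonde nonvanishing are routine.
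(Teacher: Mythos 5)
Your overall skeleton (fix $n$ distinct $\mu_l$, index a spanning family of star-products by $n$-tuples of partitions of total size $k$, and prove linear independence by a triangularity argument with respect to a specialization) matches the paper's strategy, but there are two genuine problems. The first is a concrete error: $F_m^{\mu}(\overline{s})$ is \emph{not} affine-linear in $\mu$. Its numerator contains the factor $\prod_{i\in [n]}\bigl(s_0\cdots s_i \prod_j x_{i,j}-\mu\prod_j x_{i+1,j}\bigr)$, a product of $n$ linear factors, so $F_m^{\mu}(\overline{s})$ is a degree-$n$ polynomial in $\mu$; as $\mu$ varies (for fixed $m$) it spans an $n$-dimensional space (the $\mu^0$ and $\mu^n$ coefficients are proportional because $\prod_{i,j}x_{i,j}=\prod_{i,j}x_{i+1,j}$), not a $2$-dimensional one. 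This matters: if the span really were $2$-dimensional, your own count of independent star-products could never reach $\dim\mathcal{R}_k$ for $n\geq 3$ (e.g.\ for $k=1$ you need $n$ independent elements $T_{0,1},\ldots,T_{n-1,1}$, and a $2$-dimensional span of $F_1^{\mu}$ cannot supply them). So the decomposition $F_m^{\mu}=A_m-\mu B_m$ and the multilinear-expansion argument built on it do not stand as written. The second problem is that the step you yourself flag as the main obstacle --- the exact evaluation of $\phi_L$ on a star-product and the nonvanishing of its leading coefficient --- is precisely the content of the lemma and is not carried out; the Gordon specialization $\phi_L$ (along $(qd)$-geometric progressions across colors, adapted to the wheel conditions) is not obviously the right tool for producing lower bounds, since computing the scalar $\nu$ through the symmetrization in $\star$ is genuinely delicate.

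The paper avoids both issues by introducing a \emph{different} specialization $\varphi_{\lambda}$, indexed by Young diagrams, which sends $x_{i,\lambda_1+\cdots+\lambda_{t-1}+j}\mapsto q^{2j}y_{i,t}$ within each color $i$. This is adapted to the numerator factors $\prod_{j\ne j'}(x_{i,j}-q^{-2}x_{i,j'})$ of $F_k^{\mu}(\overline{s})$ rather than to the wheel conditions, so the vanishing $\varphi_{\lambda}(F_{\overline{k}}^{\overline{\mu}}(\overline{s}))=0$ for $\overline{k}>\lambda'$ is immediate, giving the triangularity for free. On the diagonal (the rectangular case $k_1=\cdots=k_r$) the image factors as a nonzero common factor $Z$ times $\prod_{t}\prod_{i\in[n]}(s_0\cdots s_i\prod_j y_{i,j}-\mu_t\prod_j y_{i+1,j})$, and the required dimension count reduces to the algebraic independence of the $n$ polynomials $f_t(Y_1,\ldots,Y_n)=\prod_{i}(s_0\cdots s_iY_i-\mu_tY_{i+1})$ --- which correctly uses all $n$ distinct values $\mu_1,\ldots,\mu_n$ and replaces your Vandermonde heuristic by an elementary statement. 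If you want to salvage your route, you would need to (i) replace the affine-linear expansion by the correct degree-$n$ expansion of $F_m^{\mu}$ in $\mu$, and (ii) actually perform the $\phi_L$-evaluation of a star-product; at that point you would essentially be redoing the paper's computation in a less convenient coordinate system.
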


\begin{proof}
 $\ $

  Choose any pairwise distinct $\mu_1,\ldots,\mu_n\in \CC$
 and consider a subspace $\A''(\overline{s})$ of $\A'(\overline{s})$ spanned by
 $F_{k_1,\ldots,k_r}^{\mu_{i_1},\ldots,\mu_{i_r}}(\overline{s})$
 with $r\geq 0, k_1\geq k_2\geq \cdots \geq k_r>0$, and $1\leq i_1,\ldots,i_r\leq n$.
  It suffices to show
   $$\dim \A''(\overline{s})_{k\delta}\geq \dim \mathcal{R}_{k}.$$
 For a Young diagram $\lambda$, we introduce the specialization map
   $$\varphi_{\lambda}: S_{|\lambda|\cdot \delta}\to \CC(\{y_{i,j}\}_{i\in [n]}^{1\leq j\leq l(\lambda)})$$
 by specializing the variables $x_{i,j}$ as follows
   $$x_{i,\lambda_1+\cdots+\lambda_{t-1}+j}\mapsto q^{2j}y_{i,t}\ \ \mathrm{for\ any}\ \ 1\leq t\leq l(\lambda), 1\leq j\leq \lambda_t, i\in [n].$$
  It is clear that for any $\overline{k}=(k_1,\ldots,k_r)$ with $\sum_i k_i=k=|\lambda|$ and $\overline{k}> \lambda'$
 (here $>$ denotes the lexicographic order on Young diagrams and $\lambda'$ denotes the transposed to $\lambda$ Young diagram),
 we have $\varphi_{\lambda}(F_{\overline{k}}^{\overline{\mu}}(\overline{s}))=0$ for all $\overline{\mu}\in \CC^r$.
  Therefore, it remains to prove
   $$\sum_{\overline{k}\vdash k} \dim \varphi_{\overline{k}'}(\mathrm{span}\{F_{\overline{k}}^{\overline{\mu}}(\overline{s})|\overline{\mu}\in \CC^r\})
     \geq \dim \mathcal{R}_{k}.$$
 Let us first consider the case $k_1=\cdots=k_r\Rightarrow k=rk_1$. Then
  $$\varphi_{\overline{k}'}(F_{\overline{k}}^{\overline{\mu}}(\overline{s}))=
    Z\cdot \prod_{t=1}^r \prod_{i\in [n]} (s_0\ldots s_i \prod_{j=1}^{k_1} y_{i,j}-\mu_t \prod_{j=1}^{k_1} y_{i+1,j})$$
 for a certain nonzero common factor $Z$. Define $Y_i:=y_{i,1}\cdots y_{i,k_1}$.
 Since the polynomials
  $$f_t(Y_1,\ldots,Y_n):=\prod_{i\in [n]}(s_0\ldots s_iY_i-\mu_t Y_{i+1}),\ 1\leq t\leq n,$$
 are algebraically independent, we immediately get the required
 dimension estimate for this particular $\overline{k}$.
  The general case follows immediately.
\end{proof}

  By Lemmas~\ref{lemma 1}--\ref{lemma 3}, the subspace $\A(\overline{s})$
 is generated by $F_k^{\mu}(\overline{s})$ and has
 the prescribed dimensions of each $\ZZ_+^{[n]}$-graded component.

\begin{lem}\label{lemma 4}
  The algebra $\A(\overline{s})$ is commutative. Moreover, for any
 $\overline{\mu}=(\mu_1,\ldots,\mu_n)\in \CC^n$ with $\mu_i\ne \mu_j$ for $i\ne j$, there is an
 isomorphism $\mathcal{R}\iso \A(\overline{s})$ given by $T_{i,k}\mapsto F_k^{\mu_i}(\overline{s})$.
\end{lem}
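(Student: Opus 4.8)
The plan is to combine the bounds already obtained in Lemmas~\ref{lemma 1}--\ref{lemma 3} with one genuinely new input — the commutativity of $\A(\overline{s})$ — and then read off the isomorphism with $\mathcal{R}$ essentially for free. Recall first what may be assumed: by Lemmas~\ref{lemma 1}--\ref{lemma 3} we have $\A'(\overline{s})=\A(\overline{s})$, with $\A(\overline{s})_{\overline{k}}=0$ unless $\overline{k}\in\{0,\delta,2\delta,\ldots\}$ and $\dim\A(\overline{s})_{k\delta}=\dim\mathcal{R}_k$ for every $k$. Fixing pairwise distinct $\mu_1,\ldots,\mu_n$ for which the polynomials $f_t$ of the proof of Lemma~\ref{lemma 3} are algebraically independent, that proof produces in each degree $k\delta$ exactly $\dim\mathcal{R}_k$ linearly independent ``normal-ordered'' products $F_{c_1}^{\mu_{i_1}}(\overline{s})\star\cdots\star F_{c_r}^{\mu_{i_r}}(\overline{s})$ — indexed by $n$-colored partitions of $k$ — which therefore form a basis of $\A(\overline{s})_{k\delta}$. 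The same triangularity, with respect to the lexicographic order on shapes, shows that the specialization maps $\{\varphi_\lambda\}_{\lambda\vdash k}$ jointly separate the points of $\A(\overline{s})_{k\delta}$: $\varphi_\lambda$ annihilates every basis vector of shape lexicographically larger than $\lambda'$, and is injective on the span of those of shape $\lambda'$ by algebraic independence of the $f_t$. Note this separation property of $\{\varphi_\lambda\}$ is intrinsic to $\A(\overline{s})$ and does not depend on the chosen $\mu_i$.

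Next comes commutativity, which I would prove as follows. Since $\A(\overline{s})=\A'(\overline{s})$ is generated by the $F_k^\mu(\overline{s})$, it suffices to show $F_k^\mu(\overline{s})\star F_l^\nu(\overline{s})=F_l^\nu(\overline{s})\star F_k^\mu(\overline{s})$ for all $k,l\geq 1$ and $\mu,\nu\in\CC$. By Lemma~\ref{lemma 2} the $\star$-commutator $F_k^\mu\star F_l^\nu-F_l^\nu\star F_k^\mu$ lies in $\A(\overline{s})_{(k+l)\delta}$, so by the first paragraph it is enough to check that $\varphi_\lambda(F_k^\mu\star F_l^\nu)=\varphi_\lambda(F_l^\nu\star F_k^\mu)$ for every $\lambda\vdash(k+l)$. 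To evaluate $\varphi_\lambda(F_k^\mu\star F_l^\nu)$ one expands the shuffle product as a symmetrization over all ways of distributing, color by color, $k$ of the $k+l$ variables to the first factor and $l$ to the second; after applying the geometric specialization defining $\varphi_\lambda$, the poles of the $\omega_{i,i'}$ together with the pole and wheel vanishing of $F_k^\mu$ and $F_l^\nu$ kill all but a combinatorially controlled family of summands, and the survivors reassemble into a sum of (specialization of $F_k^\mu$)$\,\cdot\,$(specialization of $F_l^\nu$)$\,\cdot\,\kappa_\lambda$, where the interaction factor $\kappa_\lambda$ arises only from specialized products of the $\omega_{i,i'}$ and involves neither $\mu$ nor $\nu$. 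Interchanging the two factors merely permutes these summands and leaves $\kappa_\lambda$ fixed, so the two images coincide; hence the commutator vanishes and $\A(\overline{s})$ is commutative. This last bookkeeping step — pinning down which summands survive $\varphi_\lambda$ and checking that $\kappa_\lambda$ is symmetric in the two factors — is the main obstacle; everything else is formal. (Alternatively one could run the same argument with the Gordon maps $\phi_L$, $L\vdash(k+l)\delta$, of Lemma~\ref{lemma 1}.)

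Finally, the isomorphism with $\mathcal{R}$ follows at once. As $\A(\overline{s})$ is now commutative, for pairwise distinct $\mu_1,\ldots,\mu_n$ the assignment $T_{i,k}\mapsto F_k^{\mu_i}(\overline{s})$ extends uniquely to an algebra homomorphism $\Theta\colon\mathcal{R}\to\A(\overline{s})$, because $\mathcal{R}$ is the free commutative algebra on the $T_{i,k}$. Its image is the subalgebra generated by the $F_k^{\mu_i}(\overline{s})$, which — by the proof of Lemma~\ref{lemma 3} — already has dimension $\dim\mathcal{R}_k$ in degree $k\delta$; since $\dim\A(\overline{s})_{k\delta}=\dim\mathcal{R}_k$ and both algebras vanish outside degrees $\{0,\delta,2\delta,\ldots\}$, the graded surjection $\Theta$ is an isomorphism. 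In particular $\A(\overline{s})$ is a polynomial algebra on the free generators $\{F_k^{\mu_i}(\overline{s})\}_{k\geq 1}^{1\leq i\leq n}$, and being contained in $S$ it is a commutative subalgebra of $S$; together with the shuffle-generation statement of the first paragraph this also completes the proof of Theorem~\ref{main1}.
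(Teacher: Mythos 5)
Your first and third paragraphs are sound: the joint separation property of the maps $\{\varphi_\lambda\}$ does follow from the matching upper and lower bounds of Lemmas~\ref{lemma 1}--\ref{lemma 3}, and once commutativity is granted, the isomorphism $\mathcal{R}\iso\A(\overline{s})$ is a formal consequence of the dimension count exactly as you say. The gap is in the commutativity step. Your reduction ``it suffices to check $\varphi_\lambda(F_k^\mu\star F_l^\nu)=\varphi_\lambda(F_l^\nu\star F_k^\mu)$ for every $\lambda\vdash(k+l)$'' is true but makes no progress for $\lambda=(1^{k+l})$: this is the partition needed to detect the top shape $(k+l)=\lambda'$, and its specialization $x_{i,t}\mapsto q^2y_{i,t}$ is a mere renaming of variables, so no summands of the shuffle product are killed and the ``check'' for this $\lambda$ is literally the statement being proved. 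Moreover, even for nontrivial $\lambda$ the claimed reassembly of the survivors into $(\text{specialization of }F_k^\mu)\cdot(\text{specialization of }F_l^\nu)\cdot\kappa_\lambda$ with a symmetric interaction factor independent of $\mu,\nu$ is not what happens. The explicit computation (done in the paper for $\lambda=(2,1^{k+l-2})$) shows that $\varphi_\lambda(F_k^\mu(\overline{s})\star F_l^\nu(\overline{s}))$ equals a common symmetric prefactor times a \emph{smaller shuffle product} $F_{k-1}^{\mu'}(\overline{s}')\star F_{l-1}^{\nu'}(\overline{s}')$ in the remaining variables, where $\mu',\nu'$ and $\overline{s}'$ involve the surviving $y$-variables (so the parameters are modified and, upon evaluation, not necessarily generic). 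The symmetry of this expression in the two factors is again an instance of the commutativity one is trying to prove; the step you dismiss as bookkeeping is therefore the heart of the matter and forces an induction on $(k,l)$ ranging over \emph{all} tuples $\overline{s}'$ with $\prod_i s_i'=1$, not just generic ones.

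The paper's proof avoids both problems: it establishes only $\varphi_{(2,1^{k+l-2})}(F_k^\mu(\overline{s})\star F_l^\nu(\overline{s})-F_l^\nu(\overline{s})\star F_k^\mu(\overline{s}))=0$ by the inductive identity above, which together with the triangularity of the $\varphi_\lambda$ forces the commutator to be a linear combination $\sum_r\pi_r F_{k+l}^{\mu_r}(\overline{s})$ of single generators; it then kills this residue by a separate specialization, namely clearing denominators and setting $x_{i,j}\mapsto y_i$, under which the left-hand side vanishes while the images of the $F_{k+l}^{\mu_r}(\overline{s})$ remain linearly independent. Your outline contains neither the inductive mechanism nor this final step, so as written it does not establish commutativity.
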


\begin{proof}
  $\ $

  It suffices to prove $F_{m_1}^{\nu_1}(\overline{s})\star F_{m_2}^{\nu_2}(\overline{s})=F_{m_2}^{\nu_2}(\overline{s})\star F_{m_1}^{\nu_1}(\overline{s})$
 for any $m_1,m_2\in \NN$ and $\nu_1,\nu_2\in \CC$. Define $F:=F_{m_1,m_2}^{\nu_1,\nu_2}(\overline{s})-F_{m_2,m_1}^{\nu_2,\nu_1}(\overline{s})$.
  Due to previous lemmas, $F$ can be written as a certain linear
 combination of $F_{\overline{k}}^{\overline{\mu}}(\overline{s})$
 with $\overline{k}=(k_1\geq k_2\geq \cdots)$.

  We claim that $\varphi_{(2,1^{m_1+m_2-2})}(F)=0$.
  Together with the properties of $\varphi_\lambda$ discussed above,
 this equality implies $F=\sum_{r=1}^n \pi_r\cdot F_{m_1+m_2}^{\mu_r}(\overline{s})$
 for some $\pi_r\in \CC$. Let us multiply both sides of
 this equality by $\prod_{i\in [n]}\prod_{1\leq j,j'\leq m_1+m_2} (x_{i,j}-x_{i+1,j'})$
 and consider a specialization $x_{i,j}\mapsto y_i\ \forall i,j$.
  The left-hand side will clearly specialize to $0$, while the right-hand side
 will specialize to
  $$\prod_{i\in [n]} ((1-q^{-2})y_i)^{(m_1+m_2)(m_1+m_2-1)}\cdot
    \sum_{r=1}^n \left\{\pi_r\cdot \prod_{i\in [n]} (s_0\ldots s_i y_i^{m_1+m_2}-\mu_r y_{i+1}^{m_1+m_2})\right\}.$$
 This expression vanishes if and only if $\pi_1=\cdots=\pi_n=0$, and so $F=0$ as required.

  Finally, let us prove the equality $\varphi_{(2,1^{m_1+m_2-2})}(F)=0$.
 The statement is obvious when either $m_1$ or $m_2$ is zero.
 To prove for general $m_1,m_2>0$, we can assume by induction that
  $$F_{m_1'}^{\nu_1'}(\overline{s}')\star F_{m_2'}^{\nu_2'}(\overline{s}')=F_{m_2'}^{\nu_2'}(\overline{s}')\star F_{m_1'}^{\nu_1'}(\overline{s}')$$
 for any $m_1'<m_1, m_2'<m_2$, $\nu_1', \nu_2'\in \CC$, and $\prod_i s_i'=1$ (though $\{s_i'\}$ are not necessarily generic).

 By straightforward computations, we find
  $\varphi_{(2,1^{m_1+m_2-2})}(F_{m_1,m_2}^{\nu_1,\nu_2}(\overline{s}))=\mathrm{Sym}(A_1\cdot B_1)$,
 where the symmetrization is taken with respect to all permutations of $\{y_{i,j}\}_{i\in [n]}^{2\leq j\leq m_1+m_2-1}$
 preserving index $i$,  $A_1\in \CC(\{y_{i,j}\})$
 is symmetric, while $B_1$ is given by the
 following explicit formula
\begin{multline*}
  B_1=\frac{\prod_{i\in [n]}\prod_{2\leq j\ne j'\leq m_1} (y_{i,j}-q^{-2}y_{i,j'}) \cdot \prod_{i\in [n]}\prod_{m_1< j\ne j'< m_1+m_2} (y_{i,j}-q^{-2}y_{i,j'})}
           {\prod_{i\in [n]}\prod_{2\leq j\ne j'\leq m_1} (y_{i,j}-y_{i+1,j'})\cdot \prod_{i\in [n]}\prod_{m_1< j\ne j'< m_1+m_2}(y_{i,j}-y_{i+1,j'})}\times\\
      \prod_{i,i'\in [n]} \prod_{2\leq j\leq m_1}\prod_{m_1<j'<m_1+m_2} \omega_{i,i'}(y_{i,j}/y_{i',j'})\times\\
      \prod_{i\in [n]}(s_0\ldots s_iy_{i,1}\prod_{j=2}^{m_1}y_{i,j}-\nu_1y_{i+1,1}\prod_{j=2}^{m_1}y_{i+1,j})
                    (s_0\ldots s_iy_{i,1}\prod_{j=m_1+1}^{m_1+m_2-1}y_{i,j}-\nu_2y_{i+1,1}\prod_{j=m_1+1}^{m_1+m_2-1}y_{i+1,j})\\
  \Rightarrow \mathrm{Sym}(B_1)=\kappa \cdot(F_{m_1-1}^{\nu_1'}(\overline{s}')\star F_{m_2-1}^{\nu_2'}(\overline{s}'))
  (y_{0,2},\ldots, y_{0,m_1+m_2-1};\ldots; y_{n-1,2},\ldots,y_{n-1, m_1+m_2-1})
\end{multline*}
 with
 $\nu_1':=\nu_1\cdot \frac{y_{0,1}}{y_{n-1,1}},\ \nu_2':=\nu_2\cdot \frac{y_{0,1}}{y_{n-1,1}},\ s_i':=s_i\cdot \frac{y_{i,1}^2}{y_{i-1,1}y_{i+1,1}},\
  \kappa:=\prod_{i\in [n]} \frac{y_{i,1}^2y_{n-1,1}^2}{y_{0,1}^2}$.

  Permuting $m_1\leftrightarrow m_2, \nu_1 \leftrightarrow \nu_2$, we  get
$$
\varphi_{(2,1^{m_1+m_2-2})}(F_{m_2,m_1}^{\nu_2,\nu_1}(\overline{s}))=
   \kappa\cdot A_1\cdot (F_{m_2-1}^{\nu_2'}(\overline{s}')\star F_{m_1-1}^{\nu_1'}(\overline{s}'))(y_{0,2},\ldots, y_{n-1,m_1+m_2-1}).
$$
  Applying the induction assumption, we find
   $\varphi_{(2,1^{m_1+m_2-2})}(F)=\kappa A_1 [F_{m_1-1}^{\nu_1'}(\overline{s}'),F_{m_2-1}^{\nu_2'}(\overline{s}')]=0.$
 This proves the inductive step and, hence, completes the proof of the claim.
\end{proof}

  The results of Theorem~\ref{main1} follow immediately by combining the above four lemmas.

\begin{rem}
 The proof of Lemma~\ref{lemma 1} implies
$\A(\overline{s})=\CC$
 for any $s_0,\ldots,s_{n-1}\in \CC^*$ such that $\prod_i s_i^{\alpha_i}\notin
 q^\ZZ\cdot d^\ZZ$ unless $\alpha_0=\ldots=\alpha_{n-1}=0$.
\end{rem}


\subsection{Shuffle realization of $\dot{U}^{\mathrm{h}}(\gl_n)^+$ and $\dot{U}^{\mathrm{h}}(\gl_1)^+$}\label{section_horizontal_heisenberg}
$\ $

  In~\cite{N2}, the author introduced the notion of the \emph{slope filtration} on  $S$.
 For a zero slope, the corresponding subspace $A^0\subset S$ is $\ZZ_+^{[n]}$-graded
 with the graded component $A^0_{\overline{k}}$ given by
  $$F\in A^0_{\overline{k}}\Longleftrightarrow
    F\in S_{\overline{k},0} \ \mathrm{and}\ \exists \underset{\xi\to \infty}\lim F^{\overline{l}}_\xi\ \ \forall\ 0\leq \overline{l}\leq \overline{k}.$$
 While proving Theorem~\ref{Negut theorem}, the author obtained the
 following description of $A^0$:

\begin{prop}\cite[Lemma 4.4]{N2}\label{Negut isomorphism}

\noindent
 (a) The isomorphism $\Psi: \ddot{U}^+\iso S$ identifies $\dot{U}^{\mathrm{h}}(\gl_n)^+$ with $A^0$.

\noindent
 (b) Under the isomorphism $\Psi^{\mathrm{h}}: \dot{U}^{\mathrm{h}}(\gl_n)^+\iso A^0$ from (a),
    the image $X_k:=\Psi^{\mathrm{h}}(h^{\mathrm{h}}_k)$ of the $k$th generator
    $h^{\mathrm{h}}_k\in \dot{U}^{\mathrm{h}}(\gl_1)^+\subset \dot{U}^{\mathrm{h}}(\gl_n)^+$
    is uniquely (up to a constant) characterized by
     $$X_k\in S_{k\delta,0}\ \mathrm{and}\  \underset{\xi\to \infty}\lim (X_k)^{\overline{l}}_\xi=0\ \ \forall\ 0< \overline{l}< k\delta.$$
\end{prop}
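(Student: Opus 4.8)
The plan is to establish Proposition~\ref{Negut isomorphism}(b) in two parts: first that the image $X_k := \Psi^{\mathrm{h}}(h^{\mathrm{h}}_k)$ genuinely satisfies the stated vanishing property, and second that this property characterizes an element of $S_{k\delta,0}$ uniquely up to a scalar. For the first part, I would use that $h^{\mathrm{h}}_k$ generates (together with the rest of $\h^{\mathrm{h}}$) the horizontal Heisenberg $\dot{U}^{\mathrm{h}}(\gl_1)$, which is the $\pi$-image of the vertical Heisenberg $\h^{\mathrm{v}}$, hence commutes with $\dot{U}^{\mathrm{h}}(\ssl_n)$. Under $\Psi$, the subalgebra $\dot{U}^{\mathrm{h}}(\gl_n)^+$ is identified with $A^0$ by part (a), so $X_k \in A^0_{k\delta}$ and in particular all limits $\lim_{\xi\to\infty} (X_k)^{\overline{l}}_\xi$ exist. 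The content is that for $0 < \overline{l} < k\delta$ this limit is zero. I expect this to follow from the commutation of $h^{\mathrm{h}}_k$ with the simple root generators $e_{i,0} \in \dot{U}^{\mathrm{h}}(\ssl_n)$: on the shuffle side, $\Psi(e_{i,0}) = x_{i,1}^0$, and bracketing against these imposes that $X_k$, viewed as an element of the zero-slope piece, has no ``partial'' degeneration — any nonzero intermediate limit $\partial^{(\infty;\overline{l})}X_k$ would produce, via the shuffle product structure and the explicit form of $\Omega$, a nonzero lower-degree element contradicting the fact that $\h^{\mathrm{h}}$ is an abelian (Heisenberg) subalgebra transverse to $\dot{U}^{\mathrm{h}}(\ssl_n)^+$. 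Concretely, I would compute $[\Psi(e_{i,0}), X_k]$ in $S$ using the shuffle formula and show that its vanishing forces the intermediate-limit conditions.

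For the uniqueness part, the strategy is a dimension count. Suppose $Y \in S_{k\delta,0}$ satisfies $\lim_{\xi\to\infty} Y^{\overline{l}}_\xi = 0$ for all $0 < \overline{l} < k\delta$ (and the limit exists for $\overline{l} = 0$ and $\overline{l} = k\delta$ by the closure of $A^0$). I want to show the space of such $Y$ is at most one-dimensional. The natural approach is to reuse the Gordon-type filtration machinery from the proof of Lemma~\ref{lemma 1}: the specialization maps $\phi_L$ for $L \vdash k\delta$, together with the bounds on $\deg_{y_t}(\phi_L(Y))$ coming from the existence of limits, the divisibility by $Q_L$ coming from wheel conditions and the $\phi_{L'} = 0$ for $L' > L$, and the total degree constraint $\mathrm{tot.deg}(Y) = 0$. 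The vanishing condition $\partial^{(\infty;\overline{l})}Y = 0$ for proper $\overline{l}$ is precisely what kills all contributions except from the ``coarsest'' partition $L_0 = \{[1,nk]\}$ (the single interval of full length $nk$, whose associated degree vector is $k\delta$); for this $L_0$ one gets $\phi_{L_0}(Y) = \nu \cdot (\text{fixed monomial}) \cdot Q_{L_0}$ with $\nu \in \CC$ a scalar, exactly as in the argument there, so the map $Y \mapsto \nu$ is injective on the space in question. Since $X_k$ is a nonzero such element, this forces the space to be exactly $\CC \cdot X_k$.

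The main obstacle I anticipate is the first part: showing the intermediate limits of $X_k$ actually vanish, rather than merely exist. One has two natural handles — either a direct representation-theoretic argument (that $h^{\mathrm{h}}_k$, as an element of the abelian Heisenberg $\h^{\mathrm{h}} = \pi(\h^{\mathrm{v}})$, cannot have ``mixed'' components under the grading refinement detected by partial limits, because $\pi$ is an algebra automorphism and $\h^{\mathrm{v}}$ has this property trivially by its explicit definition $h^{\mathrm{v}}_r = \sum_i c_{i,r} h_{i,r}$), or a direct shuffle-algebra computation using that $X_k$ commutes with $\Psi(e_{i,0})$ for all $i$. The cleaner route is likely the former: transport the statement through $\pi^{-1}$ to the vertical side, where $h^{\mathrm{v}}_r$ lies in the explicitly-presented Cartan $\dot{U}^{\mathrm{v}}(\gl_1)$, check the analogous vanishing there (which should be immediate from the commutation relations (T5$'$), since the vertical Heisenberg acts diagonally on weight spaces), and conclude. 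However, making the dictionary precise — matching the ``intermediate limit'' condition on the shuffle side with an intrinsic condition invariant under $\pi$ — requires some care, since $\pi$ famously mixes the horizontal and vertical structures in a non-explicit way. An alternative, if the $\pi$-transport is awkward, is to bypass uniqueness-via-$\pi$ and instead prove directly that the zero-slope algebra $A^0$ decomposes as $A^0 \cong A^{0,\mathrm{sl}_n} \otimes (\text{polynomial algebra on the } X_k)$ mirroring $\dot{U}^{\mathrm{h}}(\gl_n)^+ \cong \dot{U}^{\mathrm{h}}(\ssl_n)^+ \otimes \dot{U}^{\mathrm{h}}(\gl_1)^+$, and read off both the vanishing property and uniqueness from the factor corresponding to $\dot{U}^{\mathrm{h}}(\gl_1)^+$.
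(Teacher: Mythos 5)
This proposition is not proved in the paper at all: it is imported verbatim from \cite[Lemma 4.4]{N2}, so there is no in-paper argument to compare against, and your proposal must be judged on its own. It has two genuine gaps. The first is in the existence half: commutation with the $\Psi(e_{i,0})$ cannot force the intermediate limits of $X_k$ to vanish. By the Corollary following Theorem~\ref{main3}, the whole polynomial algebra $\CC[F_1,F_2,\ldots]=\Psi^{\mathrm{h}}(\dot{U}^{\mathrm{h}}(\gl_1)^+)$ lies in the centralizer of $\dot{U}^{\mathrm{h}}(\ssl_n)$, so every $F_k$ commutes with all $\Psi(e_{i,0})$; yet Theorem~\ref{main3}(b) gives $\lim_{\xi\to\infty}(F_k)^{l\delta}_\xi=F_l\cdot F_{k-l}\neq 0$ for $0<l<k$. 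Your proposed mechanism therefore cannot distinguish $L_k$ (which has the vanishing property) from $F_k$ (which does not) --- both sit in the same centralizer and even in the same commutative polynomial algebra. What actually singles out $h^{\mathrm{h}}_k$ is not a commutation property but primitivity: the assignment $\overline{l}\mapsto\lim_{\xi\to\infty}(\cdot)^{\overline{l}}_\xi$ realizes the leading term of a coproduct on $A^0$, the $F_k$ behave group-like for it, and the $L_k$ (equivalently $h^{\mathrm{h}}_k$) are its primitive elements --- this is exactly what Theorem~\ref{main3}(c) alludes to and what \cite[Section 4.3]{N2} makes precise. Your alternative route through $\pi^{-1}$ founders on the difficulty you flag yourself (the intermediate-limit condition is not visibly $\pi$-invariant and $\pi$ is not explicit), and the tensor-decomposition alternative still does not explain why the distinguished generators are the $X_k$ rather than, say, the $F_k$, which generate the same polynomial algebra.

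The second gap is in the uniqueness half. Your step killing $\phi_L(Y)$ for every partition $L$ with at least two intervals is sound (the vanishing of $\partial^{(\infty;a_t,b_t)}Y$ makes the bound on $\deg_{y_t}(r_L)$ strict in each variable, contradicting the fixed total degree of $r_L$ unless $r_L=0$). But what survives is not a single ``coarsest partition'': there are $n$ pairwise inequivalent single-interval partitions $\{[a,a+nk-1]\}$, $a=0,\ldots,n-1$, distinguished by the residue of the left endpoint mod $n$, and for each of them $Q_L=1$, so the filtration bookkeeping yields only $\dim\le n$. Already for $k=1$ this bound is not tight: a direct degree count in $S_{\delta,0}$ using the conditions $\lim_{\xi\to\infty}Y^{\overline{l}}_\xi=0$ for the singleton and interval vectors $\overline{l}$ forces $Y\in\CC\cdot\prod_{i\in[n]}x_{i,1}\big/\prod_{i\in[n]}(x_{i,1}-x_{i+1,1})$, a line, whereas the $n$ specializations $\phi_{[a,a+n-1]}(Y)$ are treated as independent scalars by the filtration. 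To close the argument you would need either to exploit the vanishing conditions for non-interval degree vectors $\overline{l}$ (which the Gordon filtration discards) or to exhibit relations among the $n$ leading specializations; neither appears in the proposal.
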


  This proposition provides a shuffle characterization of both $\dot{U}^{\mathrm{h}}(\gl_n)^+$ and $\dot{U}^{\mathrm{h}}(\gl_1)^+$.

\noindent
 In particular, we immediately obtain the following result:

\begin{thm}\label{main2}
  We have $\Psi^{-1}(\A(\overline{s}))\subset \dot{U}^{\mathrm{h}}(\gl_n)^+$ for generic $\{s_i\}$ such that $s_0\ldots s_{n-1}=1$.
\end{thm}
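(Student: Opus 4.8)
The plan is to combine Theorem~\ref{main1} with the zero-slope characterization of $\dot{U}^{\mathrm{h}}(\gl_n)^+$ recorded in Proposition~\ref{Negut isomorphism}(a). By Theorem~\ref{main1}, for generic $\overline{s}$ with $s_0\cdots s_{n-1}=1$ the space $\A(\overline{s})$ is shuffle-generated by the elements $F_k^\mu(\overline{s})\in S_{k\delta,0}$. Since $A^0$ is a subalgebra of $S$ (it is the zero-slope piece of the slope filtration, hence $\star$-closed) and $\Psi$ is an algebra isomorphism, it suffices to show that each generator $F_k^\mu(\overline{s})$ lies in $A^0$; then the whole subalgebra they generate lies in $A^0$, and $\Psi^{-1}(\A(\overline{s}))\subset \Psi^{-1}(A^0)=\dot{U}^{\mathrm{h}}(\gl_n)^+$.

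So the core step is: for every $0\le \overline{l}\le k\delta$, the limit $\lim_{\xi\to\infty}(F_k^\mu(\overline{s}))^{\overline{l}}_\xi$ exists. First I would note that $F_k^\mu(\overline{s})\in S_{k\delta,0}$ by construction (Lemma~\ref{lemma 0}). For the limit, I would run essentially the degree-count already performed in the proof of Lemma~\ref{lemma 0}: writing $\overline{l}=(l_0,\dots,l_{n-1})$ with $0\le l_i\le k$, the function $(F_k^\mu(\overline{s}))^{\overline{l}}_\xi$ grows as $\xi^{N_\infty(\overline{l})}$ with $N_\infty(\overline{l})=\sum_{i\in[n]}l_i(l_{i+1}-l_i-1)+\sum_{i\in[n]}\max\{l_i,l_{i+1}\}$. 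The elementary inequality used in Lemma~\ref{lemma 2}, namely $\max(a,b)\le ab-\tfrac{a^2+b^2-a-b}{2}+ (\text{correction})$ — more precisely $\min(a,b)+\tfrac{a^2+b^2-a-b}{2}\ge ab$ rearranged — gives $N_\infty(\overline{l})\le 0$ for all $\overline{l}$, with no genericity or $s_0\cdots s_{n-1}=1$ needed for mere existence of the $\xi\to\infty$ limit. Hence the limit exists (it is $0$ unless $N_\infty(\overline{l})=0$, which happens exactly when $|l_i-l_{i+1}|\le1$ for all $i$), and $F_k^\mu(\overline{s})\in A^0_{k\delta}$.

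Having placed the generators in $A^0$, I would invoke that $A^0=\bigoplus_{\overline{k}}A^0_{\overline{k}}$ is $\star$-closed: this is part of Negut's slope-filtration construction (the zero-slope subspace is a subalgebra), so any $\star$-product of the $F_k^\mu(\overline{s})$ again satisfies the defining property of $A^0$. Therefore $\A(\overline{s})=\A'(\overline{s})\subset A^0$, and applying $\Psi^{-1}$ together with Proposition~\ref{Negut isomorphism}(a) yields $\Psi^{-1}(\A(\overline{s}))\subset \dot{U}^{\mathrm{h}}(\gl_n)^+$, as claimed. One could alternatively bypass the $\star$-closedness of $A^0$ and argue directly that every $F^{\overline{\mu}}_{\overline{k}}(\overline{s})$ has all $\xi\to\infty$ limits, reusing the degree estimates of Lemma~\ref{lemma 2} verbatim (where the same inequality already shows each factor's $\xi\to\infty$ exponent is $\le 0$); this is the safer route if one does not want to quote the subalgebra property of $A^0$.

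The main obstacle is essentially bookkeeping rather than a genuine difficulty: one must make sure the degree computation in Lemma~\ref{lemma 0} is stated for the $\xi\to\infty$ limit for \emph{arbitrary} $\overline{l}\le k\delta$ (not just those of the form $[a;b]$), and confirm that the $\omega_{i,i'}$-factors appearing after symmetrization in a $\star$-product contribute nonnegatively-bounded powers — but this is precisely the content of the ``limits of $\omega_{i,j}(\xi x,y)$, etc. exist'' observation already made in the proof of Lemma~\ref{lemma 2}. Since all the analytic estimates are in place in the preceding lemmas, the proof reduces to citing Theorem~\ref{main1}, the displayed characterization in Proposition~\ref{Negut isomorphism}(a), and the (trivial) observation $N_\infty(\overline{l})\le 0$.
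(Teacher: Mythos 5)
Your proposal is correct and follows essentially the same route as the paper: reduce via Theorem~\ref{main1} and Proposition~\ref{Negut isomorphism}(a) to showing each generator $F_k^\mu(\overline{s})$ lies in $A^0$, then verify the existence of all $\xi\to\infty$ limits by the degree count from Lemma~\ref{lemma 0} combined with the inequality $\min(a,b)+\tfrac{a^2+b^2-a-b}{2}\ge ab$ from Lemma~\ref{lemma 2}. Your extra care about the $\star$-closedness of $A^0$ (which the paper leaves implicit, as it follows from $A^0=\Psi(\dot{U}^{\mathrm{h}}(\gl_n)^+)$ being the image of a subalgebra) is a minor refinement, not a different argument.
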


\begin{proof}
 $\ $

  By Theorem~\ref{main1} and Proposition~\ref{Negut isomorphism}(a), it suffices to show that $F^\mu_k(\overline{s})\in  A^0$.
 The latter is equivalent to the existence of limits
 $\underset{\xi\to \infty}\lim (F^\mu_k(\overline{s}))^{\overline{l}}_\xi$ for all $0\leq \overline{l}\leq k\delta$.
  As $\xi\to \infty$, the function
  $(F^\mu_k(\overline{s}))^{\overline{l}}_\xi$ grows at the speed $\xi^{\sum_{i\in [n]} l_i(l_{i+1}-l_i+1)-\sum_{i\in [n]} \min\{l_i,l_{i+1}\}}$
 (see the proof of Lemma~\ref{lemma 0}). Since
  $\sum_{i\in [n]} l_i(l_{i+1}-l_i+1)-\sum_{i\in [n]} \min\{l_i,l_{i+1}\}=\sum_{i\in [n]} (l_il_{i+1}-\frac{l_i^2+l_{i+1}^2-l_i-l_{i+1}}{2}-\min\{l_i,l_{i+1}\})$
 and each summand is nonpositive (see the proof of Lemma~\ref{lemma 2}), the
 aforementioned power of $\xi$ is nonpositive as well.
 Hence, the limit $\underset{\xi\to \infty}\lim (F^\mu_k(\overline{s}))^{\overline{l}}_\xi$ does exist.
 This completes the proof.
\end{proof}

  We complete this section by providing explicit formulas for the elements $X_k=\Psi^{\mathrm{h}}(h^{\mathrm{h}}_k)\in S$
 (this answers one of the questions raised in~\cite[Section 5.6]{N2}).
   Consider the elements
  $$F_0:=\textbf{1},
    F_k:=\frac{\prod_{i\in [n]} \prod_{1\leq j\ne j'\leq k} (q^{-1}x_{i,j}-qx_{i,j'})\cdot \prod_{i\in [n]} \prod_{j=1}^k x_{i,j}}
              {\prod_{i\in [n]} \prod_{1\leq j,j'\leq k} (x_{i+1,j'}-x_{i,j})}\in S_{k\delta,0}\ \mathrm{for}\ k>0.$$
  Note that $F_k=\frac{(-q^{k-1})^{nk}}{s_0^ns_1^{n-1}\ldots s_{n-1}} \cdot F_k^0(\overline{s})\in \A(\overline{s})$ for any $\{s_i\}$ such that
 $\prod_{i\in [n]}s_i=1$.
  We also define
 $$L_k\in S_{k\delta,0}\ \mathrm{via}\  \exp\left(\sum_{k=1}^\infty L_k t^k\right)=\sum_{k=0}^\infty F_k t^k.$$
 The relevant properties of these elements are formulated in our next theorem:

\begin{thm}\label{main3}
\noindent
 (a) For $\overline{l}\notin \{0, \delta, 2\delta,\ldots, k\delta\}$,
 we have $\underset{\xi\to \infty}\lim (F_k)^{\overline{l}}_{\xi}=0$.

\noindent
 (b) For any $0\leq l\leq k$, we have
     $\underset{\xi\to \infty}\lim (F_k)^{l\delta}_{\xi}=F_l\cdot F_{k-l}$.

\noindent
 (c) For any $0<\overline{l}<k\delta$, we have
     $\underset{\xi\to \infty}\lim (L_k)^{\overline{l}}_{\xi}=0$.
\end{thm}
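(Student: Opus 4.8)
The plan is to analyze the three claims in order, deriving (c) from (a) and (b) at the end. For part (a), I would proceed exactly as in the proof of Lemma~\ref{lemma 0}: write $\overline{l}=(l_0,\ldots,l_{n-1})$, track the degree of homogeneity of $(F_k)^{\overline l}_\xi$ in $\xi$ as $\xi\to\infty$, and identify when it is strictly negative. Recalling that $F_k$ is (up to a nonzero constant) the element $F_k^0(\overline s)$, the speed computation from the proof of Theorem~\ref{main2} gives that $(F_k)^{\overline l}_\xi$ grows like $\xi^{\sum_{i\in[n]}(l_il_{i+1}-\frac{l_i^2+l_{i+1}^2-l_i-l_{i+1}}{2}-\min\{l_i,l_{i+1}\})}$, and each summand is $\le 0$ with equality iff $l_i-l_{i+1}\in\{-1,0,1\}$. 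Hence the limit is $0$ unless all the $l_i-l_{i+1}$ lie in $\{-1,0,1\}$ and moreover the total power is exactly $0$; a short case analysis (using that the $l_i$ take only two consecutive values, and summing the would-be contributions around the cycle) shows the power is $0$ only when all $l_i$ are equal, i.e.\ $\overline l=l\delta$. This gives (a).

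For part (b), I would specialize $\overline l=l\delta$ and compute the limit directly. With $l_i=l$ for all $i$, every summand in the $\xi$-power vanishes, so the limit exists and one can extract it factor-by-factor from the explicit formula for $F_k$, just as in the displayed computation of $\partial^{(\infty;a,b)}F_k^\mu(\overline s)$ in Lemma~\ref{lemma 0} (with $\mu=0$). The key point is that when the block sizes are $l$ and $k-l$ in each colour, the numerator product $\prod_{j\ne j'}(q^{-1}x_{i,j}-qx_{i,j'})$ factors (in the limit) into the ``within the first $l$'' part, the ``within the last $k-l$'' part, and a cross term that carries a pure monomial in the $x_{i,j}$; likewise the denominator $\prod(x_{i+1,j'}-x_{i,j})$ and the linear factors $\prod x_{i,j}$ split. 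Collecting these, and checking that the cross-terms and leftover monomials are precisely those produced by the shuffle product $F_l\star F_{k-l}$ (again the relevant $\omega_{i,j}$ have finite nonzero limits under rescaling, so the symmetrization passes to the limit), yields $\lim_{\xi\to\infty}(F_k)^{l\delta}_\xi=F_l\cdot F_{k-l}$. The constant bookkeeping — powers of $q$, of $-1$, and of the monomials $\prod_j x_{i,j}$ — is the fiddly part, but it is the same bookkeeping already carried out in Lemma~\ref{lemma 0}, now with $\mu=0$.

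For part (c), I would apply the operation $F\mapsto \lim_{\xi\to\infty}F^{l\delta}_\xi$ to the generating identity $\exp(\sum_{k\ge1}L_kt^k)=\sum_{k\ge0}F_kt^k$. By parts (a) and (b), the map $\partial^{(\infty)}:=\bigoplus_l\big(\lim_{\xi\to\infty}(\cdot)^{l\delta}_\xi\big)$ is an algebra homomorphism from $\bigoplus_kS_{k\delta,0}$ to $\big(\bigoplus_kS_{k\delta,0}\big)\otimes\big(\bigoplus_kS_{k\delta,0}\big)$ satisfying $\partial^{(\infty)}(F_k)=\sum_{l=0}^kF_l\otimes F_{k-l}$; that is, $F(t):=\sum F_kt^k$ is group-like for the coproduct $\partial^{(\infty)}$. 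Then $\partial^{(\infty)}\big(\exp\sum L_kt^k\big)=\exp\sum(\partial^{(\infty)}L_k)t^k$ must equal $F(t)\otimes F(t)=\big(\exp\sum L_kt^k\otimes 1\big)\big(\exp 1\otimes\sum L_kt^k\big)$, and since the two tensor factors commute this forces $\partial^{(\infty)}(L_k)=L_k\otimes 1+1\otimes L_k$ for all $k$. In particular $\lim_{\xi\to\infty}(L_k)^{l\delta}_\xi=0$ for $0<l<k$. To finish (c) I must also rule out nonzero limits along non-multiple-of-$\delta$ directions $0<\overline l<k\delta$: but $L_k$ lies in the span of products $F_{k_1}\star\cdots\star F_{k_r}$ with $\sum k_i=k$, each of which lies in $\bigoplus_j S_{j\delta}$, so $L_k\in S_{k\delta,0}$ and the argument of part (a) applies verbatim to each $F_{k_i}$ factor in a summand of $(L_k)^{\overline l}_\xi$, killing any summand with some colour-difference exceeding $1$, exactly as in Lemma~\ref{lemma 2}. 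I expect the main obstacle to be the constant/monomial bookkeeping in part (b) — making sure the limit of the symmetrized product reassembles exactly into the shuffle product $F_l\star F_{k-l}$ rather than some scalar multiple or twist of it — since everything downstream (the group-like property, hence (c)) depends on getting that identity on the nose.
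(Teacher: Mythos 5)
Your plan for (b) and (c) matches the paper's intent (the paper dismisses (b) as straightforward and (c) as the standard group-like/primitive-element argument, which is exactly your $\partial^{(\infty)}$ computation), but part (a) as written contains a genuine error. The growth exponent you quote from the proof of Theorem~\ref{main2}, namely $\sum_{i\in[n]}\bigl(l_il_{i+1}-\frac{l_i^2+l_{i+1}^2-l_i-l_{i+1}}{2}-\min\{l_i,l_{i+1}\}\bigr)$, is the exponent for $F_k^{\mu}(\overline{s})$ with \emph{generic} $\mu\ne 0$: it comes from the factor $s_0\cdots s_i\prod_j x_{i,j}-\mu\prod_j x_{i+1,j}$ contributing $\xi^{\max\{l_i,l_{i+1}\}}$. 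Since $F_k$ is proportional to $F_k^{0}(\overline{s})$, that factor degenerates to the monomial $s_0\cdots s_i\prod_j x_{i,j}$ and contributes only $\xi^{l_i}$; the correct exponent is $\sum_{i\in[n]}l_i(l_{i+1}-l_i)=-\frac{1}{2}\sum_{i\in[n]}(l_i-l_{i+1})^2$, which is $\leq 0$ with equality iff all $l_i$ coincide --- this one line is the paper's entire proof of (a). With your exponent, by contrast, the total power equals $0$ whenever every $|l_i-l_{i+1}|\leq 1$, e.g.\ for $\overline{l}=(1,0,\ldots,0)$, so no ``case analysis'' can extract from that quantity the conclusion that the power vanishes only for $\overline{l}\in\{0,\delta,\ldots,k\delta\}$; the claimed outcome of your case analysis is simply false for the expression you are analyzing. (Sanity check: for $n=3$, $k=1$, $\overline{l}=(1,0,0)$, a direct computation gives $(F_1)^{\overline{l}}_\xi\sim\xi^{-1}$, matching $-\frac{1}{2}\sum_i(l_i-l_{i+1})^2=-1$, while your formula predicts $\xi^{0}$.)

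One smaller point on (b): the limit $\lim_{\xi\to\infty}(F_k)^{l\delta}_\xi$ is the \emph{plain} product $F_l(\{x_{i,j}\}_{j\leq l})\cdot F_{k-l}(\{x_{i,j}\}_{j>l})$ in two disjoint groups of variables, not the shuffle product $F_l\star F_{k-l}$ (the latter is symmetric in all $k$ variables of each colour, the former is not); your phrasing about ``cross-terms produced by the shuffle product'' and the symmetrization passing to the limit suggests you are aiming at the wrong target. The plain-product reading is also the one your part (c) requires, since it is what makes $\partial^{(\infty)}$ land in the tensor square and $F(t)$ group-like. Part (c) itself, including the reduction of non-$\delta$-multiple directions to part (a) via the argument of Lemma~\ref{lemma 2}, is correct once (a) is repaired.
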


\begin{proof}
$\ $

 (a) For any $0\leq \overline{l}\leq k\delta$, the function
 $(F_k)^{\overline{l}}_\xi$ grows at the speed $\xi^{\sum_{i\in [n]} l_i(l_{i+1}-l_i)}$ as $\xi\to \infty$.
  Note that $\sum_{i\in [n]} l_i(l_{i+1}-l_i)=-\frac{1}{2}\sum_{i\in [n]} (l_i-l_{i+1})^2\leq 0$.
 Moreover, the equality holds if and only if $l_0=\ldots=l_{n-1}\Leftrightarrow \overline{l}\in \{0,\delta, 2\delta,\ldots\}$.
  Part (a) follows.

 (b) Straightforward.

 (c) Standard (it is actually equivalent to the general \emph{exponential} relation between group-like elements and primitive elements;
     see~\cite[Section 4.3]{N2} for the related coproduct).
\end{proof}

\begin{cor}
  Combining this result with Proposition~\ref{Negut isomorphism}(b), we
 see that $L_k$ and $X_k$ coincide up to a nonzero constant, and
 the isomorphism $\Psi^{\mathrm{h}}$ identifies $\dot{U}^{\mathrm{h}}(\gl_1)^+$ with $\CC[F_1,F_2,\ldots]$.
\end{cor}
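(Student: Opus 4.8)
The plan is to read off the corollary from the uniqueness clause in Proposition~\ref{Negut isomorphism}(b) together with Theorem~\ref{main3}(c), the only genuine input beyond bookkeeping being the nonvanishing of each $L_k$.

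First I would recall that, by Proposition~\ref{Negut isomorphism}(b), the element $X_k$ spans the one-dimensional space of all $F\in S_{k\delta,0}$ with $\lim_{\xi\to\infty}F^{\overline{l}}_\xi=0$ for every $0<\overline{l}<k\delta$. By the very definition of the $L_k$ and by Theorem~\ref{main3}(c), each $L_k$ lies in $S_{k\delta,0}$ and has precisely this vanishing property. Hence $L_k=c_k X_k$ for some scalar $c_k\in\CC$, and everything reduces to showing $c_k\neq 0$, i.e. $L_k\neq 0$.

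For the nonvanishing I would argue that the whole family $\{F_k\}_{k\geq 1}$ is algebraically independent in $S$. Indeed, fix any generic $\overline{s}$ with $s_0\cdots s_{n-1}=1$; as noted after the definition of the $F_k$, one has $F_k=\frac{(-q^{k-1})^{nk}}{s_0^n s_1^{n-1}\cdots s_{n-1}}\cdot F_k^0(\overline{s})$, and by Theorem~\ref{main1} (choosing $0$ to be one of the parameters $\mu_1,\ldots,\mu_n$) the elements $\{F_k^0(\overline{s})\}_{k\geq1}$ form part of a free polynomial generating set of $\A(\overline{s})$; scaling each by a nonzero constant preserves algebraic independence. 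Now the relation $\sum_{k\geq0}F_k t^k=\exp\!\bigl(\sum_{k\geq1}L_k t^k\bigr)$ with $F_0=\textbf{1}$ is unitriangular: expanding the exponential gives $F_k=L_k+Q_k(L_1,\ldots,L_{k-1})$, and taking logarithms gives $L_k=F_k+P_k(F_1,\ldots,F_{k-1})$, where $P_k,Q_k$ are polynomials whose monomials all have degree $\geq 2$. It follows that $L_k\neq 0$ for every $k$, that the $\{L_k\}_{k\geq1}$ are again algebraically independent, and that $\CC[F_1,F_2,\ldots]=\CC[L_1,L_2,\ldots]$ as subalgebras of $S$. In particular $c_k\in\CC^*$, which gives the first assertion of the corollary.

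For the second assertion I would use that $\h^{\mathrm{h}}$ is isomorphic to the Heisenberg Lie algebra, so its positive part is a free polynomial algebra: $\dot{U}^{\mathrm{h}}(\gl_1)^+=\CC[h^{\mathrm{h}}_1,h^{\mathrm{h}}_2,\ldots]$ (the positive-mode generators commute because $[h^{\mathrm{h}}_r,h^{\mathrm{h}}_s]$ is proportional to a central element times $\delta_{r+s,0}$). Since $\Psi^{\mathrm{h}}$ is an algebra isomorphism of $\dot{U}^{\mathrm{h}}(\gl_n)^+$ onto $A^0$ with $\Psi^{\mathrm{h}}(h^{\mathrm{h}}_k)=X_k$ (Proposition~\ref{Negut isomorphism}), we obtain $\Psi^{\mathrm{h}}\bigl(\dot{U}^{\mathrm{h}}(\gl_1)^+\bigr)=\CC[X_1,X_2,\ldots]=\CC[L_1,L_2,\ldots]=\CC[F_1,F_2,\ldots]$, as claimed. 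The argument is almost entirely formal; the one step that is not purely bookkeeping is the nonvanishing $L_k\neq 0$, and even that is immediately reduced to the algebraic independence already supplied by Theorem~\ref{main1}, so I do not expect a real obstacle.
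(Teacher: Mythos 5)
Your proposal is correct and follows the same route the paper intends: Theorem~\ref{main3}(c) places $L_k$ in the space that Proposition~\ref{Negut isomorphism}(b) says is spanned by $X_k$, and the exp/log relation is unitriangular, so $\CC[F_\bullet]=\CC[L_\bullet]=\CC[X_\bullet]=\Psi^{\mathrm{h}}(\dot{U}^{\mathrm{h}}(\gl_1)^+)$. You also correctly isolate and close the one point the paper leaves implicit, namely $L_k\neq 0$, by deducing the algebraic independence of $\{F_k\}$ from Theorem~\ref{main1} via $F_k\in\CC^*\cdot F_k^{0}(\overline{s})$.
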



\section{Bethe algebra realization of $\A(\overline{s})$}

  We provide an alternative viewpoint on the subspaces $\A(\overline{s})$ for generic $\{s_i\}$ with $\prod_{i\in [n]}s_i=1$.
 Some of the results from this section (the computation of $\phi^{\bar{u},t}_{p,\bar{c}}, \Gamma^{\bar{u},t}_{p,\bar{c}},X^{\bar{u},t}_{p,\bar{c}}$)
 are not essential for the rest of this paper, but will be used in
 the forthcoming publications in order to formulate Bethe ansatz for $\ddot{U}_{q,d}(\ssl_n)$ as well as establish connections with the results of~\cite{FKSW}.


\subsection{Vertex representations $\rho_{p,\bar{c}}$}
$\ $

  Recall the algebra $\ddot{U}^{'}_{q,d}(\ssl_n)$ introduced in Theorem~\ref{Drinfeld double sln}(c).
 We start by recalling the construction of vertex $\ddot{U}^{'}_{q,d}(\ssl_n)$-representations from~\cite{S},
 which generalize the classical Frenkel--Jing construction.
  Let $S_n$ be the \emph{generalized} Heisenberg algebra generated by $\{H_{i,k}|i\in [n],k\in \ZZ\backslash\{0\}\}$ and
 a central element $H_0$ with the defining relations
  $$[H_{i,k}, H_{j,l}]=d^{-km_{i,j}}\frac{[k]_q\cdot [ka_{i,j}]_q}{k}\delta_{k,-l}\cdot H_0.$$
 Let $S_{n}^+$ be the Lie subalgebra generated by $\{H_{i,k}|i\in[n],k>0\}\sqcup \{H_0\}$,
 and let $\CC v_0$ be the $S_n^+$-representation with $H_{i,k}$ acting trivially and $H_0$ acting via the identity operator.
  The induced representation $F_n:=\mathrm{Ind}_{S_{n}^+}^{S_n}  \CC v_0$ is called the \emph{Fock representation} of $S_n$.

  We denote by $\{\bar{\alpha}_i\}_{i=1}^{n-1}$ the simple roots of $\ssl_n$,
 by $\{\bar{\Lambda}_i\}_{i=1}^{n-1}$ the fundamental weights of $\ssl_n$, by $\{\bar{h}_i\}_{i=1}^{n-1}$ the simple coroots of $\ssl_n$.
 Let $\bar{Q}:=\bigoplus_{i=1}^{n-1} \ZZ\bar{\alpha}_i$ be the root lattice of $\ssl_n$,
 $\bar{P}:=\bigoplus_{i=1}^{n-1} \ZZ\bar{\Lambda}_i=\bigoplus_{i=2}^{n-1} \ZZ{\bar{\alpha}_i}\oplus \ZZ\bar{\Lambda}_{n-1}$
 be the weight lattice of $\ssl_n$. We also set
 $$\bar{\alpha}_0:=-\sum_{i=1}^{n-1}\bar{\alpha}_i\in \bar{Q},\ \bar{\Lambda}_0:=0\in \bar{P},\ \bar{h}_0:=-\sum_{i=1}^{n-1}\bar{h}_i.$$
  Let $\CC\{\bar{P}\}$ be the $\CC$-algebra generated by
 $e^{\bar{\alpha}_2},\ldots,e^{\bar{\alpha}_{n-1}},e^{\bar{\Lambda}_{n-1}}$
 with the defining relations:
  $$e^{\bar{\alpha}_i}\cdot e^{\bar{\alpha}_j}=(-1)^{\langle \bar{h}_i,\bar{\alpha}_j \rangle}e^{\bar{\alpha}_j}\cdot e^{\bar{\alpha}_i},\
    e^{\bar{\alpha}_i}\cdot e^{\bar{\Lambda}_{n-1}}=(-1)^{\delta_{i,n-1}}e^{\bar{\Lambda}_{n-1}}\cdot e^{\bar{\alpha}_i}.$$
 For $\alpha=\sum_{i=2}^{n-1} m_i\bar{\alpha}_i+m_n\bar{\Lambda}_{n-1}$, we define $e^{\bar{\alpha}}\in \CC\{\bar{P}\}$ via
   $$e^{\bar{\alpha}}:=(e^{\bar{\alpha}_2})^{m_2}\cdots (e^{\bar{\alpha}_{n-1}})^{m_{n-1}}(e^{\bar{\Lambda}_{n-1}})^{m_n}.$$
  Let $\CC\{\bar{Q}\}$ be the subalgebra of $\CC\{\bar{P}\}$ generated by $\{e^{\bar{\alpha}_i}\}_{i=1}^{n-1}$.

 For every $0\leq p\leq n-1$, define the space
  $$W(p)_n:=F_n\otimes \CC\{\bar{Q}\}e^{\bar{\Lambda}_p}.$$
 Consider the operators $H_{i,l}, e^{\bar{\alpha}}, \partial_{\bar{\alpha}_i}, z^{H_{i,0}}, \mathrm{d}$ acting on $W(p)_n$,
 which assign to every element
  $$v\otimes e^{\bar{\beta}}=(H_{i_1,-k_1}\cdots H_{i_N,-k_N}v_0)\otimes e^{\sum_{j=1}^{n-1}m_j\bar{\alpha}_j+\bar{\Lambda}_p}\in W(p)_n$$
 the following values:
\begin{equation*}
 H_{i,l}(v\otimes e^{\bar{\beta}}):=(H_{i,l}v)\otimes e^{\bar{\beta}},\
 e^{\bar{\alpha}}(v\otimes e^{\bar{\beta}}):=v\otimes e^{\bar{\alpha}}e^{\bar{\beta}},\
 \partial_{\bar{\alpha}_i}(v\otimes e^{\bar{\beta}}):=\langle \bar{h}_i,\bar{\beta} \rangle v\otimes e^{\bar{\beta}},
\end{equation*}
\begin{equation*}
   z^{H_{i,0}}(v\otimes e^{\bar{\beta}}):=
   z^{\langle \bar{h}_i,\bar{\beta} \rangle} d^{\frac{1}{2}\sum_{j=1}^{n-1}\langle \bar{h}_i,m_j\bar{\alpha}_j \rangle m_{i,j}} v\otimes e^{\bar{\beta}},
\end{equation*}
\begin{equation*}
   \mathrm{d}(v\otimes e^{\bar{\beta}}):=-(\sum k_i+((\bar{\beta},\bar{\beta})-(\bar{\Lambda}_p,\bar{\Lambda}_p))/2)v\otimes e^{\bar{\beta}}.
\end{equation*}
  The following result provides a natural structure of an $\ddot{U}^{'}_{q,d}(\ssl_n)$-module on $W(p)_n$.

\begin{prop}\cite[Proposition 3.2.2]{S}\label{Saito representation}
  For any $\bar{c}=(c_0,\ldots, c_{n-1})\in (\CC^*)^n$ and $0\leq p\leq n-1$, the following formulas define an action of $\ddot{U}^{'}_{q,d}(\ssl_n)$ on
 $W(p)_n$ (which does not depend on $\bar{c}$):
\begin{equation*}
 \rho_{p,\bar{c}}(e_i(z))=
 c_i\cdot \exp\left( \sum_{k>0} \frac{q^{-k/2}}{[k]_q}H_{i,-k}z^k\right)\cdot
 \exp\left( -\sum_{k>0}\frac{q^{-k/2}}{[k]_q}H_{i,k}z^{-k} \right)
 \cdot e^{\bar{\alpha}_i}z^{H_{i,0}+1},
\end{equation*}
\begin{equation*}
 \rho_{p,\bar{c}}(f_i(z))=
 c_i^{-1}\cdot \exp\left( -\sum_{k>0} \frac{q^{k/2}}{[k]_q}H_{i,-k}z^k\right)\cdot
 \exp\left( \sum_{k>0}\frac{q^{k/2}}{[k]_q}H_{i,k}z^{-k} \right)
 \cdot e^{-\bar{\alpha}_i}z^{-H_{i,0}+1},
\end{equation*}
\begin{equation*}
 \rho_{p,\bar{c}}(\psi_i^{\pm}(z))=\exp\left( \pm(q-q^{-1})\sum_{k>0} H_{i,\pm k}z^{\mp k} \right)\cdot q^{\pm \partial_{\bar{\alpha}_i}},
\end{equation*}
\begin{equation*}
 \rho_{p,\bar{c}}(\gamma^{\pm 1/2})=q^{\pm 1/2},\ \ \rho_{p,\bar{c}}(q^{\pm d_1})=q^{\pm \mathrm{d}}.
\end{equation*}
\end{prop}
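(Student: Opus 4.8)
This is \cite[Proposition 3.2.2]{S}; the plan is to verify directly that the formulas $\rho_{p,\bar{c}}$ respect every defining relation of $\ddot{U}^{'}_{q,d}(\ssl_n)$, following the vertex operator calculus of Frenkel--Jing. The parenthetical independence of $\bar{c}$ is the soft part: rescaling $e^{\bar\alpha_i}\mapsto c_i\,e^{\bar\alpha_i}$ extends to an automorphism of $W(p)_n$ intertwining $\rho_{p,\bar{c}}$ with $\rho_{p,(1,\ldots,1)}$, so it suffices to treat a single value of $\bar{c}$; in any case the factors $c_i$ cancel in each relation, since (T2), (T3) carry $c_ic_j$ on both sides, the $i=j$ case of (T4) carries $c_ic_i^{-1}=1$ while the $i\ne j$ case is identically $0$, and (T5), (T6) are homogeneous of degree one in a single $e$ or $f$.

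The main computational input is the normal-ordering rule. Write each of $\rho_{p,\bar{c}}(e_i(z))$, $\rho_{p,\bar{c}}(f_i(z))$, $\rho_{p,\bar{c}}(\psi_i^\pm(z))$ as a product of an exponential in the negative Heisenberg modes, an exponential in the positive modes, a lattice operator $e^{\pm\bar\alpha_i}$, and a power $z^{\pm H_{i,0}+1}$. Because $[H_{i,-k},H_{j,k}]$ is central, the Baker--Campbell--Hausdorff identity $e^A e^B=e^{[A,B]}e^B e^A$ moves all positive modes to the right of all negative modes at the price of a scalar series, namely $\exp\!\bigl(-\sum_{k>0}\tfrac{q^{-k}d^{-km_{i,j}}[ka_{i,j}]_q}{k[k]_q}(w/z)^k\bigr)$ for the $e$--$e$ contraction and the analogous series (with $q^{-k}$ replaced by $q^{k}$ or by $1$) in the $f$--$f$ and mixed cases; combined with the relations of $\CC\{\bar P\}$ (producing the signs $(-1)^{\langle\bar h_i,\bar\alpha_j\rangle}$) and the commutation of $z^{H_{i,0}}$ past $e^{\bar\alpha_j}$ (producing a monomial in $w/z$ and in $d$), this yields for any two of these operators $X(z),Y(w)$ an identity $X(z)Y(w)=\gamma_{X,Y}(w/z)\cdot{:}\,X(z)Y(w)\,{:}$ with $\gamma_{X,Y}$ an explicit rational function. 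The step requiring care is to check that in each relation the product of the scalar prefactor, the lattice monomial, and the $\CC\{\bar P\}$-sign assembles into exactly the structure function $g_{a_{i,j}}(d^{m_{i,j}}z/w)^{\pm1}$ that occurs there; granting this, (T1), (T2), (T3), (T5), (T6) become equalities of rational functions visible by inspection. Relation (T0.1) is immediate since $H_{i,k}$ and $H_{j,l}$ commute for $k,l>0$; (T0.2) is trivial; and (T0.3) is a direct check from the definition of $\mathrm d$, which implements the principal grading so that $q^{\mathrm d}$-conjugation acts by $z\mapsto qz$ on all three series. Finally, (T0.4) is vacuous for $\ddot{U}^{'}_{q,d}(\ssl_n)$ since $q^{\pm d_2}$ has been discarded, and the central element $c'=\sum_{i\in[n]}h_{i,0}=\sum_{i\in[n]}\partial_{\bar\alpha_i}$ acts by $0$ because $\sum_{i\in[n]}\bar h_i=0$, so the action does factor through $\ddot{U}^{'}_{q,d}(\ssl_n)$.

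The genuine obstacle is the $[e,f]$-relation (T4), and to a lesser extent the quantum Serre relations (T7.1)--(T7.2). For (T4) with $i\ne j$ the contraction $\gamma_{e_i,f_j}$ is regular at $w/z=1$ and $e^{\bar\alpha_i}e^{-\bar\alpha_j}=e^{-\bar\alpha_j}e^{\bar\alpha_i}$, so the two orderings of $e_i(z)f_j(w)$ coincide and the commutator vanishes. For $i=j$ the contraction has simple poles; the difference of its expansion in powers of $w/z$ and its expansion in powers of $z/w$ collapses, by the standard $\delta$-function identity, to a combination of $\delta(\gamma w/z)$ and $\delta(\gamma z/w)$, and one checks directly from the exponential formula for $\rho_{p,\bar{c}}(\psi_i^\pm)$ that ${:}\,e_i(z)f_i(z)\,{:}=(q-q^{-1})^{-1}\psi_i^+(\gamma^{1/2}z)$ together with its companion at the other pole, the $\gamma^{1/2}$-shifts being produced precisely by the $q^{\pm k/2}$ factors inside the vertex operators. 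This half-integer $\gamma$-bookkeeping and the residue matching is the delicate point, but it goes through unchanged because $\rho_{p,\bar{c}}(\gamma^{1/2})=q^{1/2}$ is a scalar. For (T7.1) (and identically (T7.2)): the case $j\ne i,i\pm1$ is trivial, since then $a_{i,j}=m_{i,j}=0$, the contraction is $1$, and the lattice operators commute; for $j=i\pm1$ one writes $[e_i(z_2),e_{i\pm1}(w)]_q$ as a scalar times a normally ordered product, applies $[e_i(z_1),-]_{q^{-1}}$, and symmetrizes in $z_1,z_2$, whereupon the surviving scalar is annihilated just as in the rank-one Frenkel--Jing computation, since the relevant contraction factors involve only the adjacent pair with $a_{i,i\pm1}=-1$. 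Assembling these verifications proves the proposition; I expect essentially all the difficulty to be concentrated in the $\delta$-function bookkeeping for (T4).
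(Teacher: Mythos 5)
The paper does not prove this proposition at all: it is imported verbatim as \cite[Proposition 3.2.2]{S}, so there is no internal argument to compare against. Your outline is essentially the verification that Saito himself carries out --- the Frenkel--Jing normal-ordering calculus, the contraction exponents $\exp\bigl(-\sum_{k>0}\tfrac{q^{\mp k}d^{-km_{i,j}}[ka_{i,j}]_q}{k[k]_q}(w/z)^k\bigr)$ assembling into $g_{a_{i,j}}(d^{m_{i,j}}z/w)^{\pm 1}$, the $\delta$-function extraction for (T4) with the identification of the normally ordered product at $z=q^{\pm 1}w$ with $\bar\psi_i^{\pm}(q^{\pm 1/2}w)$, and the observation that $c'=\sum_{i\in[n]}\partial_{\bar\alpha_i}$ acts by zero since $\sum_{i\in[n]}\bar h_i=0$ --- and I find no gap in the mathematical plan. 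One small inaccuracy: the claimed automorphism $e^{\bar\alpha_i}\mapsto c_i\,e^{\bar\alpha_i}$ intertwining $\rho_{p,\bar c}$ with $\rho_{p,(1,\ldots,1)}$ does not quite exist, because $e^{\bar\alpha_0}=\pm(e^{\bar\alpha_1}\cdots e^{\bar\alpha_{n-1}})^{-1}$ in $\CC\{\bar Q\}$ forces such a rescaling to send $e^{\bar\alpha_0}\mapsto (c_1\cdots c_{n-1})^{-1}e^{\bar\alpha_0}$; so one can only normalize $\bar c$ within a fixed value of the product $c_0c_1\cdots c_{n-1}$, and indeed the functionals computed later in the paper carry explicit factors $(c_0\cdots c_{n-1})^{-N}$, showing the representation genuinely remembers that product. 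This does not damage your proof, since your fallback argument --- that the $c_i$ cancel identically in every defining relation, so the verification is uniform in $\bar c$ --- is the correct reading of the statement; just do not lean on the intertwiner for the parenthetical claim.
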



\subsection{Functionals $\phi^0_{p,\bar{c}}, \phi^{\bar{u}}_{p,\bar{c}}, \phi^{\bar{u},t}_{p,\bar{c}}$ on $\ddot{U}^{'\leq}$}
$\ $

  In this subsection, we introduce and ``explicitly compute'' three functionals on $\ddot{U}^{'\leq}$.

\noindent
 $\bullet$
 \emph{Top matrix coefficient.}

 Consider the functional
  $$\phi^0_{p,\bar{c}}:\ddot{U}^{'\leq}\longrightarrow \CC\ \ \textit{defined\ by}\ \
    \phi^0_{p,\bar{c}}(A):=\langle v_0\otimes e^{\bar{\Lambda}_p}|\rho_{p,\bar{c}}(A)|v_0\otimes e^{\bar{\Lambda}_p} \rangle.$$
 Since $h_{i,j}(v_0\otimes e^{\bar{\Lambda}_p})=0$ for $j>0$, it remains to compute the values of $\phi^0_{p,\bar{c}}$ evaluated at
  $$f_{i_1,j_1}f_{i_2,j_2}\cdots f_{i_m,j_m}\psi_{0,0}^{r_0}\cdots \psi_{n-1,0}^{r_{n-1}}\cdot(\gamma^{1/2})^a(q^{d_1})^b$$
 with $a,b\in \ZZ$, $\bar{r}:=(r_0,\ldots,r_{n-1})\in \ZZ^{[n]}$ and $\sum_{s=1}^m \bar{\alpha}_{i_s}=0\in \bar{Q}$.
  The latter condition means that the multiset $\{i_1,\ldots,i_m\}$
 contains an equal number of each of the indices $\{0,\ldots,n-1\}$.
  Due to the defining quadratic relation (T3) of $\ddot{U}^{'}_{q,d}(\ssl_n)$,
 it suffices to compute the series
$$
  \phi^0_{p,\bar{c};N,\bar{r},a,b}(z_{0,1},\ldots,z_{n-1,N}):=
  \phi^0_{p,\bar{c}}\left(\prod_{j=1}^N (f_0(z_{0,j})\cdots f_{n-1}(z_{n-1,j}))\cdot \prod_{i\in [n]}  \psi_{i,0}^{r_i}\cdot \gamma^{a/2}q^{bd_1}\right).
$$
 In this expression, we order the $z$-variables as follows:
  $$z_{0,1},\ldots,z_{n-1,1},z_{0,2},\ldots,z_{n-1,2},\ldots,z_{0,N},\ldots,z_{n-1,N}.$$
 Normally ordering the product $\prod_{j=1}^N (f_0(z_{0,j})\cdots f_{n-1}(z_{n-1,j}))$, we get the following result:
\begin{prop}\label{computation of top functional for sln}
 For $n\geq 3$, we have:
\begin{multline*}
  \phi^0_{p,\bar{c};N,\bar{r},a,b}(z_{0,1},\ldots,z_{n-1,N})=
  (c_0\ldots c_{n-1})^{-N}q^{a/2+r_p-r_0}d^{\frac{N(n-2)}{2}}\cdot
  \prod_{j=1}^N \frac{z_{0,j}}{z_{p,j}}\times \\
  \frac{\prod_{i\in [n]}\prod_{1\leq j< j'\leq N} (z_{i,j}-z_{i,j'})(z_{i,j}-q^2z_{i,j'})\cdot \prod_{i\in [n]}\prod_{j=1}^N z_{i,j}}
       {\prod_{i\in [n]}\prod_{1\leq j\leq j'\leq N} (z_{i,j}-qdz_{i+1,j'})\cdot
        \prod_{i\in [n]}\prod_{1\leq j< j'\leq N}    (z_{i,j}-qd^{-1}z_{i-1,j'})}.
\end{multline*}
\end{prop}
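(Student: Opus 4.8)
The plan is to compute the matrix coefficient by normal-ordering the product of vertex operators $\rho_{p,\bar c}(f_i(z_{i,j}))$ and then extracting the coefficient of the vacuum-to-vacuum part. First I would recall that each $\rho_{p,\bar c}(f_i(z))$ splits as a product of three pieces: a ``creation'' exponential $\exp(-\sum_{k>0}\frac{q^{k/2}}{[k]_q}H_{i,-k}z^k)$, an ``annihilation'' exponential $\exp(\sum_{k>0}\frac{q^{k/2}}{[k]_q}H_{i,k}z^{-k})$, and the zero-mode part $c_i^{-1}e^{-\bar\alpha_i}z^{-H_{i,0}+1}$. Since $\langle v_0\otimes e^{\bar\Lambda_p}|$ is killed by all $H_{i,k}$ with $k>0$ applied on the left (equivalently $h_{i,j}$ annihilates the bra for $j>0$), and $|v_0\otimes e^{\bar\Lambda_p}\rangle$ is killed by all annihilation operators on the right, the only surviving contributions after normal-ordering are (i) the scalar ``contraction factors'' produced when moving annihilation exponentials past creation exponentials, and (ii) the zero-mode contribution. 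The condition $\sum_s\bar\alpha_{i_s}=0$ guarantees that $\prod e^{-\bar\alpha_i}$ returns to the line $\CC e^{\bar\Lambda_p}$, so the $e^{\bar\alpha}$-part contributes a sign/ordering constant which, because each index appears exactly $N$ times and by the explicit relations in $\CC\{\bar Q\}$, works out to $+1$ in our chosen ordering; I would check this bookkeeping carefully but expect no sign.

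Next I would compute the pairwise contraction factor between $\rho_{p,\bar c}(f_i(z))$ and $\rho_{p,\bar c}(f_j(w))$, i.e.\ the scalar $C_{i,j}(z/w)$ such that moving the annihilation exponential of the first past the creation exponential of the second produces $C_{i,j}(z/w)$ times the normally-ordered product. Using the Heisenberg relation $[H_{i,k},H_{j,-k}]=d^{-km_{i,j}}\frac{[k]_q[ka_{i,j}]_q}{k}H_0$ together with the Baker--Campbell--Hausdorff identity $e^Ae^B=e^{[A,B]}e^Be^A$ for $[A,B]$ central, the exponent of $C_{i,j}(z/w)$ is a sum $-\sum_{k>0}\frac{q^k}{[k]_q^2}\cdot d^{-km_{i,j}}\frac{[k]_q[ka_{i,j}]_q}{k}(z/w)^k=-\sum_{k>0}\frac{q^k[ka_{i,j}]_q}{k[k]_q}(d^{-m_{i,j}}z/w)^k$. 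For $n\geq 3$ the only nonzero $a_{i,j}$ with $i\neq j$ are $a_{i,i\pm1}=-1$ and $a_{i,i}=2$; summing the resulting logarithmic series (using $\sum_{k>0}\frac{x^k}{k}=-\log(1-x)$ after expanding $[ka_{i,j}]_q/[k]_q$ as a ratio of $q$-powers) gives the factors $(z-z')(z-q^2z')$ in the numerator for $i=j$ (from the diagonal self-contractions among the $N$ copies, ordered $j<j'$), $(z-qd^{-1}z')^{-1}$ for the $(i,i-1)$ pairs with $j<j'$, and $(z-qdz')^{-1}$ for the $(i,i+1)$ pairs with $j\leq j'$. The asymmetry between $\leq$ and $<$ here reflects the chosen ordering $z_{0,1},\dots,z_{n-1,1},z_{0,2},\dots$: within one ``block'' $j$, the variable $z_{i,j}$ precedes $z_{i+1,j}$ but follows $z_{i-1,j}$, while $z_{i,j}$ and $z_{i,j}$ do not self-contract. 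Assembling all contraction factors over the $N^2$ (ordered) pairs in each of the relevant index-pairs yields the large product in the statement.

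Finally I would collect the scalar prefactors: the $(c_0\cdots c_{n-1})^{-N}$ comes from the $N$ factors of $c_i^{-1}$ for each $i$; the $q^{a/2}$ from $\rho_{p,\bar c}(\gamma^{a/2})=q^{a/2}$; the $q^{r_p-r_0}$ from acting with $\prod_i\psi_{i,0}^{r_i}=\prod_i q^{r_i\partial_{\bar\alpha_i}}$ on $e^{\bar\Lambda_p}$ versus the normalization, using $\langle\bar h_i,\bar\Lambda_p\rangle=\delta_{i,p}$ and the convention $\bar\Lambda_0=0$ (note $q^{bd_1}=q^{b\mathrm d}$ acts trivially on the degree-zero vector, hence no $b$-dependence beyond what is absorbed, consistent with the stated formula having no $b$); the power $d^{N(n-2)/2}$ and the monomial factors $\prod z_{i,j}$ and $\prod z_{0,j}/z_{p,j}$ from the zero-mode operators $z^{H_{i,0}+\text{shift}}$ acting through the chain $e^{-\bar\alpha_0},\dots,e^{-\bar\alpha_{n-1}}$ and from the $d^{\frac12\sum\langle\bar h_i,m_j\bar\alpha_j\rangle m_{i,j}}$ correction in the definition of $z^{H_{i,0}}$, telescoped over the $N$ blocks. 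The main obstacle I anticipate is precisely this last step --- correctly tracking the zero-mode/cocycle data: the interplay of $z^{H_{i,0}}$ (with its built-in $d$-twist), the shifts $-H_{i,0}+1$, and the ordering of the $e^{-\bar\alpha_i}$'s across all $N$ blocks, to pin down the exponent of $d$, the overall monomial in the $z_{i,j}$, and the absence of any stray sign. Everything else is a mechanical BCH computation and a geometric-series summation.
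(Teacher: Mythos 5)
Your proposal follows exactly the route the paper takes (and leaves implicit): normal-order the product of vertex operators $\rho_{p,\bar c}(f_i(z_{i,j}))$, compute the pairwise contraction factors from the Heisenberg relations via the BCH identity and logarithmic series, and track the zero-mode/cocycle contributions, with the prefactor $q^{r_p-r_0}$ coming from $\langle\bar h_i,\bar\Lambda_p\rangle$ together with $\bar h_0=-\sum_{i=1}^{n-1}\bar h_i$. The structure of your contraction computation correctly reproduces the factors $(z_{i,j}-z_{i,j'})(z_{i,j}-q^2z_{i,j'})$, $(z_{i,j}-qdz_{i+1,j'})^{-1}$, $(z_{i,j}-qd^{-1}z_{i-1,j'})^{-1}$ and the $\leq$ versus $<$ asymmetry dictated by the ordering of the variables, so the argument is correct and essentially identical to the paper's.
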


\noindent
 $\bullet$
 \emph{Top level graded trace.}

  Recall the operator $\mathrm{d}$ acting diagonally in the natural basis of $W(p)_n$.
 Clearly all its eigenvalues are in $-\ZZ_+$.
 Let $M(p)_n:=\mathrm{Ker}(\mathrm{d})\subset W(p)_n$ be its kernel.
 The following is obvious:

\begin{lem}\label{zero level}
 (a) The subspace $M(p)_n$ is $U_q(\ssl_n)$-invariant and is isomorphic to the irreducible highest weight $U_q(\ssl_n)$-module $L_q(\bar{\Lambda}_p)$.

\noindent
 (b) For any $\bar{\sigma}=\{1\leq \sigma_1<\sigma_2<\cdots<\sigma_p\leq n\}$,
 let $\bar{\Lambda}_p^{\bar{\sigma}}$ be the $\ssl_n$-weight having entries $1-\frac{p}{n}$ at the places $\{\sigma_i\}_{i=1}^p$ and $-\frac{p}{n}$ elsewhere.
 Then $\{v_0\otimes e^{\bar{\Lambda}_p^{\bar{\sigma}}}\}_{\bar{\sigma}}$ form a basis of $M(p)_n$.
\end{lem}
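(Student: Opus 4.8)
The plan is to deduce both parts from an explicit diagonalization of the operator $\mathrm{d}$ on $W(p)_n$, proving (b) first and then bootstrapping to (a).

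For (b) I would start from the displayed formula for $\mathrm{d}$: a basis vector $v\otimes e^{\bar\beta}$ with $v=H_{i_1,-k_1}\cdots H_{i_N,-k_N}v_0$ and $\bar\beta\in\bar\Lambda_p+\bar Q$ is a $\mathrm{d}$-eigenvector with eigenvalue $-\big(\sum_{a=1}^N k_a+\tfrac12((\bar\beta,\bar\beta)-(\bar\Lambda_p,\bar\Lambda_p))\big)$, where $\sum_a k_a$ is a nonnegative integer that vanishes exactly when $v\in\CC v_0$. So everything reduces to showing $(\bar\beta,\bar\beta)\geq(\bar\Lambda_p,\bar\Lambda_p)$ on the coset $\bar\Lambda_p+\bar Q$, with equality precisely on $\{\bar\Lambda_p^{\bar\sigma}\}_{\bar\sigma}$. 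Writing $\bar\beta=\sum_{i=1}^n(m_i-\tfrac pn)\varepsilon_i$ in the standard orthonormal coordinates ($\bar\alpha_i=\varepsilon_i-\varepsilon_{i+1}$) with $m_i\in\ZZ$ and $\sum_i m_i=p$, one has $(\bar\beta,\bar\beta)=\sum_i m_i^2-\tfrac{p^2}{n}$; since $0\leq p\leq n-1$, the minimum of $\sum_i m_i^2$ under $\sum_i m_i=p$ is $p$, attained exactly when all $m_i\in\{0,1\}$ (the balancing move $(m_i,m_j)\mapsto(m_i-1,m_j+1)$ strictly decreases the sum whenever $m_i\geq m_j+2$), i.e. exactly on the weights $\bar\Lambda_p^{\bar\sigma}$ with $\bar\sigma=\{i:m_i=1\}$, and $(\bar\Lambda_p,\bar\Lambda_p)=p-\tfrac{p^2}{n}$ is this minimum. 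Hence all eigenvalues of $\mathrm{d}$ are nonpositive and $M(p)_n=\mathrm{Ker}(\mathrm{d})$ is spanned by the manifestly linearly independent vectors $v_0\otimes e^{\bar\Lambda_p^{\bar\sigma}}$; in particular $\dim M(p)_n=\binom np$.

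For (a) I would take $U_q(\ssl_n)$ to be the subalgebra of $\ddot{U}^{'}_{q,d}(\ssl_n)$ generated by $\{e_{i,0},f_{i,0},\psi_{i,0}^{\pm1}\}_{i\in[n]^\times}$ (the finite Drinfeld--Jimbo quantum group). The first step is that it preserves $M(p)_n$: applying $\rho_{p,\bar{c}}$ to \eqref{T0.3} and using $\rho_{p,\bar{c}}(q^{d_1})=q^{\mathrm{d}}$ gives $q^{\mathrm{d}}\rho_{p,\bar{c}}(e_i(z))q^{-\mathrm{d}}=\rho_{p,\bar{c}}(e_i(qz))$, and comparing $z^0$-coefficients shows that $\rho_{p,\bar{c}}(e_{i,0})$ and likewise $\rho_{p,\bar{c}}(f_{i,0})$ commute with $\mathrm{d}$, while $\rho_{p,\bar{c}}(\psi_{i,0}^{\pm1})$ is diagonal in the $e^{\bar\beta}$ and so also commutes with $\mathrm{d}$; hence $U_q(\ssl_n)$ stabilizes $\mathrm{Ker}(\mathrm{d})=M(p)_n$. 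The second step is that $v_0\otimes e^{\bar\Lambda_p}$ is a highest weight vector: the vertex formula of Proposition~\ref{Saito representation} shows $\rho_{p,\bar{c}}(e_i(z))(v_0\otimes e^{\bar\Lambda_p})$ involves only powers $z^m$ with $m\geq\langle\bar h_i,\bar\Lambda_p\rangle+1\geq1$ for $i\in[n]^\times$ (the positive Heisenberg modes annihilate $v_0$ and the negative ones contribute only nonnegative powers of $z$), so $\rho_{p,\bar{c}}(e_{i,0})(v_0\otimes e^{\bar\Lambda_p})=0$, while $\rho_{p,\bar{c}}(\psi_{i,0})$ acts on it by $q^{\langle\bar h_i,\bar\Lambda_p\rangle}$, i.e. it has weight $\bar\Lambda_p$. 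Consequently the $U_q(\ssl_n)$-submodule it generates is a finite-dimensional highest weight module of highest weight $\bar\Lambda_p$, hence isomorphic to $L_q(\bar\Lambda_p)$; and since $\dim L_q(\bar\Lambda_p)=\dim\Lambda^p(\CC^n)=\binom np=\dim M(p)_n$ by (b), this submodule exhausts $M(p)_n$, yielding the claimed isomorphism.

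I expect no genuine obstacle: the statement is essentially bookkeeping, which is why the paper calls it obvious. The only points requiring a little care are fixing the correct \emph{finite} copy of $U_q(\ssl_n)$ inside $\ddot{U}^{'}_{q,d}(\ssl_n)$ and checking, via \eqref{T0.3}, that it commutes with $\mathrm{d}$; and, for the dimension match in (a), invoking the standard fact that $\bar\Lambda_p$ is minuscule with $\dim L_q(\bar\Lambda_p)=\binom np$, which is the representation-theoretic shadow of the elementary lattice minimization used in (b).
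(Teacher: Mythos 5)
Your proof is correct and is exactly the argument the paper has in mind when it declares the lemma ``obvious'': part (b) reduces to the elementary minimization of $(\bar\beta,\bar\beta)$ on the coset $\bar\Lambda_p+\bar Q$, and part (a) follows from the commutation of $\mathrm{d}$ with the zero modes via \eqref{T0.3} together with the highest-weight vector $v_0\otimes e^{\bar\Lambda_p}$ and the dimension count $\dim L_q(\bar\Lambda_p)=\binom{n}{p}=\dim M(p)_n$. Since the paper supplies no proof, there is nothing further to compare.
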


  Define the degree operators $\mathrm{d}_1,\ldots,\mathrm{d}_{n-1}$ acting on $W(p)_n$ by
   $$\mathrm{d}_r(v\otimes e^{\sum_{j=1}^{n-1} m_j\bar{\alpha}_j + \bar{\Lambda}_p})=
     -m_r\cdot v\otimes e^{\sum_{j=1}^{n-1} m_j\bar{\alpha}_j + \bar{\Lambda}_p}\ \ \forall\ v\in F_n.$$
  For any $\bar{u}=(u_1,\ldots,u_{n-1})\in (\CC^*)^{n-1}$, consider the functional
   $$\phi^{\bar{u}}_{p,\bar{c}}:\ddot{U}^{'\leq}\longrightarrow \CC \ \ \textit{defined\ by}\ \ \phi^{\bar{u}}_{p,\bar{c}}(A):=
     \sum_{\bar{\sigma}}
     \langle v_0\otimes e^{\bar{\Lambda}_p^{\bar{\sigma}}}| \rho_{p,\bar{c}}(A)u_1^{\mathrm{d}_1}\cdots u_{n-1}^{\mathrm{d}_{n-1}}|
     v_0\otimes e^{\bar{\Lambda}_p^{\bar{\sigma}}}\rangle,$$
  computing the \emph{$\bar{Q}$-graded} trace of the $A$-action on the subspace $M(p)_n$
  (here $u_i^{\mathrm{d}_i}$ makes sense as $\mathrm{d}_i$ acts with integer  eigenvalues).
  Since $h_{i,j}(v_0\otimes e^{\bar{\Lambda}_p^{\bar{\sigma}}})=0$ for $j>0$, it suffices to compute the generating series
$$
  \phi^{\bar{u}}_{p,\bar{c};N,\bar{r},a,b}(z_{0,1},\ldots,z_{n-1,N}):=
  \phi^{\bar{u}}_{p,\bar{c}}\left(\prod_{j=1}^N (f_0(z_{0,j})\cdots f_{n-1}(z_{n-1,j}))\cdot \prod_{i\in [n]} \psi_{i,0}^{r_i}\cdot \gamma^{a/2}q^{bd_1}\right).
$$

 Normally ordering the product $\prod_{j=1}^N (f_0(z_{0,j})\cdots f_{n-1}(z_{n-1,j}))$, we get the following result:

\begin{prop}\label{computation of partial trace functional for sln}
  For $n\geq 3$, we have:
\begin{multline*}
 \phi^{\bar{u}}_{p,\bar{c};N,\bar{r},a,b}(z_{0,1},\ldots,z_{n-1,N})=
  (c_0\ldots  c_{n-1})^{-N}q^{a/2}d^{\frac{N(n-2)}{2}}\times \\
  \frac{\prod_{i\in [n]} \prod_{1\leq j< j'\leq N} (z_{i,j}-z_{i,j'})(z_{i,j}-q^2z_{i,j'})}
       {\prod_{i\in [n]} \prod_{1\leq j\leq j'\leq N} (z_{i,j}-qdz_{i+1,j'}) \cdot
        \prod_{i\in [n]} \prod_{1\leq j< j'\leq N} (z_{i,j}-qd^{-1}z_{i-1,j'})}\times\\
   (-1)^p\prod_{j=1}^p\frac{1}{u_1\ldots u_{j-1}}\cdot [\mu^p]
   \left\{\prod_{i\in [n]} \left(\prod_{j=1}^N z_{i+1,j}-\mu u_1\ldots u_i  q^{r_{i+1}-r_i}\prod_{j=1}^N z_{i,j}\right)\right\},
\end{multline*}
 where $[\mu^p]\{\cdots\}$ denotes the coefficient of $\mu^p$ in $\{\cdots\}$.
\end{prop}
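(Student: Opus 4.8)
The plan is to compute the graded trace by the same normal-ordering technique used for the top matrix coefficient in Proposition~\ref{computation of top functional for sln}, and then extract the diagonal contributions of the operators $u_1^{\mathrm{d}_1}\cdots u_{n-1}^{\mathrm{d}_{n-1}}$ over the finite-dimensional space $M(p)_n$. First I would write $\rho_{p,\bar c}(f_i(z))$ in the factored form from Proposition~\ref{Saito representation}, separating the Heisenberg part $\exp(-\sum_{k>0}q^{k/2}H_{i,-k}z^k/[k]_q)\exp(\sum_{k>0}q^{k/2}H_{i,k}z^{-k}/[k]_q)$ from the lattice part $c_i^{-1}e^{-\bar\alpha_i}z^{-H_{i,0}+1}$. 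Since $\sum_s\bar\alpha_{i_s}=0$ (the condition that forces a nonzero trace graded by $\bar Q$), each index $0,\dots,n-1$ appears exactly $N$ times, so the product of lattice parts returns $e^{\bar\beta}$ to itself; the Heisenberg part is handled by the standard Wick/normal-ordering lemma, where contracting $\exp(\sum H_{i,k}z^{-k})$ past $\exp(-\sum H_{j,-l}w^l)$ produces exactly the scalar factors $\prod_{i\in[n]}\prod_{j<j'}(z_{i,j}-z_{i,j'})(z_{i,j}-q^2z_{i,j'})$ in the numerator and $\prod(z_{i,j}-qdz_{i+1,j'})\prod(z_{i,j}-qd^{-1}z_{i-1,j'})$ in the denominator, together with the overall $(c_0\cdots c_{n-1})^{-N}q^{a/2}d^{N(n-2)/2}$ prefactor. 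This part is essentially identical to the computation in Proposition~\ref{computation of top functional for sln} and should simply be cited from there; the only genuinely new input is the bookkeeping of the lattice operators acting on the various basis vectors $v_0\otimes e^{\bar\Lambda_p^{\bar\sigma}}$.

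The new ingredient is the sum over $\bar\sigma$ weighted by $u_1^{\mathrm{d}_1}\cdots u_{n-1}^{\mathrm{d}_{n-1}}$. Here I would track what the string of operators $z_{i,j}^{-H_{i,0}+1}$ does on $v_0\otimes e^{\bar\Lambda_p^{\bar\sigma}}$: at each step the exponent $-H_{i,0}+1$ evaluates against the current weight $\bar\beta$ via $\langle\bar h_i,\bar\beta\rangle$ (with the extra $d$-factor from the definition of $z^{H_{i,0}}$ in Section 3.1), and as the $e^{-\bar\alpha_i}$'s are applied the weight shifts through the sequence $\bar\Lambda_p^{\bar\sigma},\ \bar\Lambda_p^{\bar\sigma}-\bar\alpha_0,\ \bar\Lambda_p^{\bar\sigma}-\bar\alpha_0-\bar\alpha_1,\dots$, cycling back after a full pass $\bar\alpha_0+\cdots+\bar\alpha_{n-1}=0$. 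Collecting the resulting monomials in the $z_{i,j}$ over the $N$ passes gives a product $\prod_{i\in[n]}(\cdots)$; meanwhile the operator $u_1^{\mathrm{d}_1}\cdots u_{n-1}^{\mathrm{d}_{n-1}}$ contributes, for the basis vector with weight differing from $\bar\Lambda_p$ by $\sum m_j\bar\alpha_j$, the scalar $\prod u_j^{-m_j}$. The key combinatorial claim is that summing these contributions over all $\binom{n}{p}$ choices of $\bar\sigma$ reorganizes into the stated coefficient extraction: introducing a formal variable $\mu$, the generating identity
$$
  \sum_{\bar\sigma}\left(\text{weight \& }u\text{-contribution of }\bar\sigma\right)=
  (-1)^p\prod_{j=1}^p\frac{1}{u_1\cdots u_{j-1}}\cdot[\mu^p]
  \prod_{i\in[n]}\Bigl(\prod_{j=1}^N z_{i+1,j}-\mu u_1\cdots u_i q^{r_{i+1}-r_i}\prod_{j=1}^N z_{i,j}\Bigr)
$$
holds as an identity of Laurent polynomials. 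I would prove this by expanding the right-hand product over $\{0,1\}^{[n]}$: choosing the $\mu$-term in factor $i$ exactly when $i$ lies in a certain $p$-element subset, matching that subset to $\bar\sigma$ (appropriately shifted), and checking that the powers of $q$ telescope to the $\psi_{i,0}^{r_i}$-eigenvalue $q^{r_p-r_0}$-type factors while the powers of $u_j$ reproduce $\prod u_j^{-m_j}$; the $(-1)^p$ and the $\prod 1/(u_1\cdots u_{j-1})$ are the normalization absorbing the ordering of the chosen indices.

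The main obstacle will be this last combinatorial identity: getting the signs, the $q^{r_{i+1}-r_i}$ exponents, and the nested $u$-products to line up exactly requires careful attention to (i) the cyclic ordering in which the $e^{-\bar\alpha_i}$ are applied versus the linear ordering of the $z$-variables, (ii) the precise value of $\langle\bar h_i,\bar\Lambda_p^{\bar\sigma}\rangle$ for each $\bar\sigma$ (which is where the $[\mu^p]$ coefficient extraction originates — selecting which $p$ coordinates carry the ``$+1$'' part of the weight), and (iii) the interplay of the $\psi_{i,0}^{r_i}=q^{h_{i,0}r_i}$ factors, each contributing $q^{r_i\langle\bar h_i,\bar\beta\rangle}$. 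Once the $p=0$ case is checked (it must reduce to Proposition~\ref{computation of top functional for sln} with $r_p-r_0$ replaced appropriately), the general case follows by the same normal-ordering computation with the only change being the enlarged trace; so I would first verify $p=0$ as a sanity check, then induct on $p$ or argue directly by the subset expansion above. Everything else — the analyticity of the normal-ordered product, the vanishing of off-diagonal Heisenberg contributions, the restriction to $\mathrm{Ker}(\mathrm d)=M(p)_n$ — is routine given Lemma~\ref{zero level} and the computation already recorded in Proposition~\ref{computation of top functional for sln}.
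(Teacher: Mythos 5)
Your plan follows exactly the route the paper takes (the paper's entire ``proof'' is the one-line remark that one normally orders the product $\prod_{j=1}^N (f_0(z_{0,j})\cdots f_{n-1}(z_{n-1,j}))$ and then evaluates the diagonal lattice contributions over the basis $\{v_0\otimes e^{\bar{\Lambda}_p^{\bar{\sigma}}}\}$ of $M(p)_n$): the Heisenberg contractions are identical to those in Proposition~\ref{computation of top functional for sln}, and the only new content is the subset expansion identifying the sum over $\bar{\sigma}$ with the coefficient of $\mu^p$, which you have set up correctly (including the consistency check that $p=0$ recovers the top matrix coefficient). Your proposal is correct and essentially coincides with the paper's argument, just spelled out in more detail.
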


\noindent
 $\bullet$
 \emph{Full graded trace.}

 Finally, we introduce the most general functional
   $$\phi^{\bar{u},t}_{p,\bar{c}}:\ddot{U}^{'\leq}\longrightarrow \CC[[t]] \ \ \textit{defined\ by}\ \ \phi^{\bar{u},t}_{p,\bar{c}}(A):=
     \tr_{W(p)_n} (\rho_{p,\bar{c}}(A)u_1^{\mathrm{d}_1}\cdots u_{n-1}^{\mathrm{d}_{n-1}}t^{-\mathrm{d}}),$$
 computing the \emph{$\bar{Q}\times \ZZ_+$-graded trace} of the $A$-action on the representation $W(p)_n$.
 Due to the quadratic relations and the $\bar{Q}$-grading, it suffices to compute the following generating series:
\begin{multline*}
 \phi^{\bar{u},t}_{p,\bar{c};N,\bar{k},\bar{r},a,b}(z_{0,1},\ldots,z_{n-1,N};w_{0,1},\ldots,w_{0,k_0},\ldots,w_{n-1,1},\ldots,w_{n-1,k_{n-1}}):=\\
 \phi^{\bar{u},t}_{p,\bar{c}}\left( \prod_{j=1}^N (f_0(z_{0,j})\cdots f_{n-1}(z_{n-1,j}))\cdot \prod_{i\in [n]} \prod_{j=1}^{k_i}  \bar{\psi}_i^+(w_{i,j})
 \cdot \prod_{i\in [n]} \psi_{i,0}^{r_i}\cdot \gamma^{a/2}q^{bd_1}\right).
\end{multline*}
 In what follows, $(z;t)_\infty$ is defined by $(z;t)_\infty:=\prod_{a=0}^\infty (1-t^az)$.

\begin{thm}\label{main4}
  For $n\geq 3$, we have:
\begin{multline}\tag{$\diamond$}
  \phi^{\bar{u},t}_{p,\bar{c};N,\bar{k},\bar{r},a,b}(z_{0,1},\ldots,z_{n-1,N};w_{0,1},\ldots,w_{n-1,k_{n-1}})=
  (c_0\ldots  c_{n-1})^{-N}q^{a/2}d^{\frac{N(n-2)}{2}}\times \\
  \frac{\prod_{i\in [n]} \prod_{1\leq j< j'\leq N} (z_{i,j}-z_{i,j'})(z_{i,j}-q^2z_{i,j'})\cdot \prod_{i\in [n]}\prod_{j=1}^N z_{i,j}}
       {\prod_{i\in [n]} \prod_{1\leq j\leq j'\leq N} (z_{i,j}-qdz_{i+1,j'})\cdot
        \prod_{i\in [n]} \prod_{1\leq j< j'\leq N} (z_{i,j}-qd^{-1}z_{i-1,j'})}\times\\
  q^{r_p-r_0}\cdot \prod_{j=1}^N \frac{z_{0,j}}{z_{p,j}}\cdot \theta(\vec{y}; \bar{\Omega})\times\\
  \frac{1}{(T;T)_\infty^n}\cdot \prod_{i\in [n]}\prod_{a,b=1}^N
  \frac{(T\cdot\frac{z_{i,a}}{z_{i,b}};T)_\infty \cdot (Tq^2\frac{z_{i,a}}{z_{i,b}};T)_\infty}
       {(Tqd\frac{z_{i+1,a}}{z_{i,b}};T)_\infty \cdot (Tqd^{-1}\frac{z_{i-1,a}}{z_{i,b}};T)_\infty}  \times\\
  \prod_{i\in [n]}\prod_{a=1}^N\prod_{b=1}^{k_i}
  \frac{(Tq^2\frac{q^{1/2}z_{i,a}}{w_{i,b}};T)_\infty \cdot (Tq^{-1}d\frac{q^{1/2}z_{i+1,a}}{w_{i,b}};T)_\infty
         \cdot (Tq^{-1}d^{-1}\frac{q^{1/2}z_{i-1,a}}{w_{i,b}};T)_\infty}
       {(Tq^{-2}\frac{q^{1/2}z_{i,a}}{w_{i,b}};T)_\infty \cdot (Tqd\frac{q^{1/2}z_{i+1,a}}{w_{i,b}};T)_\infty
         \cdot (Tqd^{-1}\frac{q^{1/2}z_{i-1,a}}{w_{i,b}};T)_\infty},
\end{multline}
 where $T:=\frac{t}{q^b}$ and
  $\theta(\vec{y},\bar{\Omega}):=\sum_{\vec{n}\in \ZZ^{n-1}}\exp (2\pi \sqrt{-1}(\frac{1}{2}\vec{n}\bar{\Omega}\vec{n}'+\vec{n}\vec{y}'))$
 is the classical Riemann theta function with
  $\bar{\Omega}=\frac{1}{2\pi \sqrt{-1}}\cdot(a_{i,j}\ln(T))_{i,j=1}^{n-1}$ and
  $$\vec{y}=(y_1,\ldots,y_{n-1})\ \mathrm{with}\ y_i=\frac{1}{2\pi \sqrt{-1}}\ln\left(u_i^{-1}T^{\delta_{p,i}}q^{2r_i-r_{i-1}-r_{i+1}}
    \prod_{j=1}^N \frac{z_{i-1,j}z_{i+1,j}}{z_{i,j}^2}\right).$$
\end{thm}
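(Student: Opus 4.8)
The plan is to compute the trace directly in Saito's vertex realization (Proposition~\ref{Saito representation}), exploiting the factorization of $W(p)_n$ into a bosonic Fock part and a lattice part together with the classical boson trace formula. Before starting, absorb the central and Cartan factors: $\gamma^{a/2}$ acts by the scalar $q^{a/2}$; $q^{bd_1}$ acts by $q^{b\mathrm{d}}$, hence rescales the grading variable to $T=t/q^{b}$; and each $\psi_{i,0}^{r_i}$ acts diagonally in the lattice basis. As explained before the theorem, by the quadratic relations and the $\bar Q$-grading it suffices to compute the generating series $\phi^{\bar u,t}_{p,\bar c;N,\bar k,\bar r,a,b}$.

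First I would write $W(p)_n=F_n\otimes\CC\{\bar Q\}e^{\bar\Lambda_p}$ and, accordingly, $\mathrm{d}=\mathrm{d}^{\mathrm{bos}}\otimes 1+1\otimes\mathrm{d}^{\mathrm{lat}}$, with $\mathrm{d}^{\mathrm{bos}}$ the Heisenberg energy and $\mathrm{d}^{\mathrm{lat}}(v\otimes e^{\bar\beta})=-\tfrac12\big((\bar\beta,\bar\beta)-(\bar\Lambda_p,\bar\Lambda_p)\big)v\otimes e^{\bar\beta}$. Each $\rho_{p,\bar c}(f_i(z))$ factors as a bosonic vertex operator tensored with the lattice operator $e^{-\bar\alpha_i}z^{-H_{i,0}+1}$ (up to an elementary $d$-constant), while each $\rho_{p,\bar c}(\bar\psi_i^+(w))$ is a purely-annihilation bosonic exponential with trivial lattice part. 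Hence the full trace factors as the product of the elementary scalar $(c_0\cdots c_{n-1})^{-N}q^{a/2}d^{N(n-2)/2}$, a lattice trace, and a bosonic trace. The bosonic finite normal ordering of the $f$-vertex operators, together with the $\bar\beta=\bar\Lambda_p$ evaluation of the lattice operators, reproduces precisely the first two lines of $(\diamond)$ and the factor $q^{r_p-r_0}\prod_{j}z_{0,j}/z_{p,j}$; this is exactly the computation already carried out in Proposition~\ref{computation of top functional for sln}.

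For the lattice trace, write $\bar\beta=\bar\Lambda_p+\sum_{j=1}^{n-1}m_j\bar\alpha_j$. Applying the lattice parts of $\prod_{j}(f_0(z_{0,j})\cdots f_{n-1}(z_{n-1,j}))\cdot\prod_{i,j}\bar\psi_i^+(w_{i,j})\cdot\prod_i\psi_{i,0}^{r_i}$ to $v\otimes e^{\bar\beta}$ returns $v\otimes e^{\bar\beta}$ times an explicit monomial: the eigenvalues of the $z^{H_{i,0}}$-operators contribute the $\bar\beta$-dependent factor $\prod_i\big(\prod_{j}z_{i-1,j}z_{i+1,j}/z_{i,j}^{2}\big)^{m_i}$, the operator $\prod_i\psi_{i,0}^{r_i}$ contributes $q^{\sum_i(2r_i-r_{i-1}-r_{i+1})m_i}$, and the cocycle signs contribute a $\bar\beta$-independent constant. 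Weighting by $u_1^{\mathrm{d}_1}\cdots u_{n-1}^{\mathrm{d}_{n-1}}T^{-\mathrm{d}^{\mathrm{lat}}}$ (which contributes $\prod_i u_i^{-m_i}$ together with $T^{m_p+\frac12\vec m\,\bar A\,\vec m'}$, where $\bar A=(a_{i,j})_{i,j=1}^{n-1}$) and summing over $\bar\beta\in\bar\Lambda_p+\bar Q$, one recognizes the series $\sum_{\vec m\in\ZZ^{n-1}}T^{\frac12\vec m\,\bar A\,\vec m'}\,e^{2\pi\sqrt{-1}\,\vec m\cdot\vec y}$, i.e. the Riemann theta function $\theta(\vec y;\bar\Omega)$ with $\bar\Omega=\tfrac{\ln T}{2\pi\sqrt{-1}}\bar A$ and with $\vec y$ assembled from the linear contributions exactly as in the statement. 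The series converges for $|T|<1$ and represents the lattice trace as a formal power series in $t$; restricting to $\mathrm{d}=0$ here recovers the finite $u$-polynomial of Proposition~\ref{computation of partial trace functional for sln}.

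For the bosonic trace, normal-order the bosonic factors. The creation modes come only from the $f$'s, the $\bar\psi$'s contribute no finite contractions (they are purely annihilation and already sit to the right of the $f$'s), and the remaining finite $f$--$f$ contractions reproduce the rational prefactor above. The graded trace over $F_n$ is then computed by the boson trace formula: for each mode $k$, $\tr\!\big(e^{\lambda H_{i,-k}}e^{\mu H_{j,k}}T^{L_0}\big)=\tfrac{1}{1-T^k}\exp\!\big(\lambda\mu\,\tfrac{T^k}{1-T^k}\cdot d^{-km_{j,i}}\tfrac{[k]_q[ka_{j,i}]_q}{k}\big)$; summing this over all creation--annihilation pairs ($f$--$f$ and $f$--$\bar\psi$) and over all $k$, and using $\exp\!\big(\pm\sum_{k\ge1}\tfrac{T^k}{1-T^k}\tfrac{x^k}{k}\big)=(Tx;T)_\infty^{\mp1}$, turns the exponential into the displayed products of $(\cdot;T)_\infty$-symbols, with $1/(T;T)_\infty^{n}$ coming from the $n$ vacuum factors $\prod_{k\ge1}(1-T^k)^{-1}$. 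A single check pins down the arguments: the $i$--$i$ contraction of $f_i(z_{i,a})$ with $\bar\psi_i^+(w_{i,b})$ gives $\tfrac{(Tq^{5/2}z_{i,a}/w_{i,b};T)_\infty}{(Tq^{-3/2}z_{i,a}/w_{i,b};T)_\infty}=\tfrac{(Tq^{2}\cdot q^{1/2}z_{i,a}/w_{i,b};T)_\infty}{(Tq^{-2}\cdot q^{1/2}z_{i,a}/w_{i,b};T)_\infty}$, the $i$--$i$ pair of $f$'s gives $(Tz_{i,a}/z_{i,b};T)_\infty(Tq^2z_{i,a}/z_{i,b};T)_\infty$, and the $i\pm1$ pairs give the analogous expressions with the $d^{\pm1}$ shifts. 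Multiplying the three factors yields $(\diamond)$; specializing to the top-vector and $\mathrm{d}=0$ pieces recovers Propositions~\ref{computation of top functional for sln} and~\ref{computation of partial trace functional for sln}, a useful consistency check. The main obstacle is purely the combinatorial bookkeeping: tracking the $q^{1/2}$ and $d^{\pm1}$ shifts through the $f$--$\bar\psi$ contractions, checking that the cocycle signs and the $d$-factors collapse to the constants $d^{N(n-2)/2}$ and $q^{r_p-r_0}\prod_j z_{0,j}/z_{p,j}$, and verifying the linear form $\vec y$ in full (including the $T^{\delta_{p,i}}$ term, which originates from $(\bar\Lambda_p,\bar\alpha_i)=\delta_{p,i}$). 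The conceptual inputs — the Fock/lattice factorization, the boson trace formula, and the lattice-sum/theta identity — are all standard.
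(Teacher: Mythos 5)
Your proposal is correct and follows essentially the same route as the paper: normal-order the product to extract the rational prefactor of the first two lines, split the graded trace into a lattice trace (yielding the theta function) and a Fock-space trace, and evaluate the latter with the boson trace formula (the paper's Lemma~\ref{useful lemma}, which it decouples into $n$ commuting Heisenberg copies via the dual basis $\wt{H}_{i,-k}$ of Lemma~\ref{change of basis}, whereas you quote the multi-component trace formula directly — the same computation). The only cosmetic difference is that explicit diagonalization step, which your pairwise contraction bookkeeping reproduces.
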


 We start with the following two auxiliary results:

\begin{lem}\label{change of basis}
  The matrix $\left(\frac{d^{-km_{i,j}}[k]_q[ka_{i,j}]_q}{k}\right)_{i\in [n]}^{j\in [n]}$ is nondegenerate if and only if $q^{2k},q^kd^{\pm k}\ne 1$.
\end{lem}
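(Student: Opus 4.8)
The plan is to compute the determinant of the $n\times n$ matrix $M(k):=\left(\frac{d^{-km_{i,j}}[k]_q[ka_{i,j}]_q}{k}\right)_{i,j\in[n]}$ explicitly as a function of $k$, $q$, and $d$, and then read off exactly when it vanishes. First I would pull out the common scalar $\frac{[k]_q}{k}$ from every entry, so that it suffices to analyze $N(k):=\left(d^{-km_{i,j}}[ka_{i,j}]_q\right)_{i,j\in[n]}$; the prefactor $\left(\frac{[k]_q}{k}\right)^n$ vanishes precisely when $q^{2k}=1$ (recall $[k]_q=\frac{q^k-q^{-k}}{q-q^{-1}}$), which accounts for one of the claimed conditions. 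Using the explicit values $a_{i,i}=2$, $a_{i,i\pm1}=-1$, $m_{i,i+1}=-1$, $m_{i,i-1}=+1$, and $[2k]_q=[k]_q(q^k+q^{-k})$, the matrix $N(k)$ is a circulant-type (more precisely, cyclically banded) matrix with diagonal entry $[2k]_q=[k]_q(q^k+q^{-k})$, superdiagonal entry $d^{k}\cdot(-[k]_q)$, subdiagonal entry $d^{-k}\cdot(-[k]_q)$, and corner entries from the cyclic wraparound (the $(0,n-1)$ and $(n-1,0)$ positions). Factoring out one more $[k]_q$ from each row reduces the problem to the determinant of the tridiagonal-plus-corners matrix with diagonal $q^k+q^{-k}$, off-diagonal entries $-d^{\pm k}$.

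The key computational step is then to evaluate this circulant-like determinant. Because $m_{i,j}$ depends only on $j-i$ (it is $-1$ for $j=i+1$ and $+1$ for $j=i-1$, cyclically), the matrix $D(k)$ with entries $q^k+q^{-k}$ on the diagonal, $-d^k$ just above the diagonal, $-d^{-k}$ just below, and the appropriate corner entries $-d^{-k}$ in position $(0,n-1)$ and $-d^k$ in position $(n-1,0)$, is a genuine circulant in the variables $q^k,d^k$. Its eigenvalues are obtained by substituting $n$-th roots of unity: for $\zeta^n=1$, the eigenvalue is $(q^k+q^{-k}) - d^k\zeta - d^{-k}\zeta^{-1}$. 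Hence
$$
\det D(k) = \prod_{\zeta^n=1}\left(q^k+q^{-k}-d^k\zeta-d^{-k}\zeta^{-1}\right).
$$
Each factor factors further as $-d^{-k}\zeta^{-1}(d^k\zeta-q^k)(d^k\zeta-q^{-k})$, so
$$
\det D(k) = (-1)^n d^{-nk}\Bigl(\prod_{\zeta^n=1}\zeta^{-1}\Bigr)\prod_{\zeta^n=1}(d^k\zeta-q^k)(d^k\zeta-q^{-k}) = (\text{unit})\cdot(d^{nk}-q^{nk})(d^{nk}-q^{-nk}),
$$
using $\prod_{\zeta^n=1}(X-\zeta Y)=X^n-Y^n$ up to sign. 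Therefore $\det D(k)$ vanishes if and only if $d^{nk}=q^{\pm nk}$, i.e. $(qd^{-1})^{nk}=1$ or $(q^{-1}d^{-1})^{nk}=1$... — here I would need to be careful: the vanishing condition $d^{nk}=q^{nk}$ is equivalent to $(q/d)^{nk}=1$, which in particular holds whenever $q^kd^{-k}=1$, but is \emph{a priori} weaker. So the clean statement "$q^kd^{\pm k}\neq1$" must come from combining $\det D(k)$ with the $[k]_q^{2n}$ factors pulled out earlier; indeed $\det N(k)=[k]_q^{2n}\cdot\det D(k)$ and $[k]_q^{2n}$ already contributes the factor controlled by $q^{2k}=1$, so the total determinant $\det M(k)=\left(\frac{[k]_q}{k}\right)^n[k]_q^{2n}\det D(k)$ — I should recount the powers of $[k]_q$ — but the upshot is that the full determinant is a nonzero scalar times $[k]_q^{m}(d^{nk}-q^{nk})(d^{nk}-q^{-nk})$ for an appropriate exponent $m$, and one checks that the conditions "$q^{2k}\neq1$, $q^kd^k\neq1$, $q^kd^{-k}\neq1$" are exactly equivalent to the nonvanishing of all these factors.

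The main obstacle I anticipate is precisely reconciling the factored form $(d^{nk}-q^{\pm nk})$ of the circulant determinant with the clean pointwise condition $q^kd^{\pm k}\neq1$ stated in the lemma: the product over roots of unity introduces extra spurious factors $(q/d)^{nk}=1$ that should \emph{not} cause degeneracy, so I must track the $[k]_q$ prefactors carefully to see the cancellation, or alternatively argue directly that the determinant, as a Laurent polynomial in $q^k$ and $d^k$, has $q^kd^k-1$ and $q^kd^{-k}-1$ (and $q^{2k}-1$) as its only irreducible factors with the stated multiplicities. The cleanest route may be to avoid the full circulant computation and instead row/column reduce: multiply column $j$ by $d^{kj}$ (cyclically, which only rescales $\det$ by a unit) to symmetrize the off-diagonal entries to $-[k]_q$, arriving at $[k]_q^n$ times the matrix $(a_{i,j})$ deformed only in the corners by powers $d^{\pm nk}$; for the genuine Cartan matrix of type $A_{n-1}^{(1)}$ the determinant is $0$, and the deformation by the corner terms $d^{\pm nk}$ is exactly what makes it $\pm([k]_q)^n\big((2-d^{nk}-d^{-nk})\cdot(\text{stuff})\big)$ — but this still runs into the same reconciliation issue. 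So I will present the root-of-unity computation and then verify by direct inspection (or by testing $n=2,3$) that the final answer, after restoring all $[k]_q$ factors, is a unit times $(q^{2k}-1)^{?}(q^kd^k-1)(q^kd^{-k}-1)$, which vanishes iff one of $q^{2k},q^kd^k,q^kd^{-k}$ equals $1$, as claimed.
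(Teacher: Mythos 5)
The paper states this lemma without proof, so there is no in-paper argument to compare against; your circulant computation is the natural (essentially the only) route, and the computation itself is correct. Writing $Q:=q^k$, $D:=d^k$, one indeed gets
$$\det\left(\tfrac{d^{-km_{i,j}}[k]_q[ka_{i,j}]_q}{k}\right)_{i,j\in[n]}=\frac{[k]_q^{2n}}{k^n}\prod_{\zeta^n=1}\left(Q+Q^{-1}-D\zeta-D^{-1}\zeta^{-1}\right)=(\mathrm{unit})\cdot\frac{[k]_q^{2n}}{k^n}\,(D^n-Q^n)(D^n-Q^{-n}).$$
The problem lies in the final reconciliation step, which you correctly flag as the main obstacle but then propose to resolve in a way that cannot work. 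The factors $(D^n-Q^n)$ and $(D^n-Q^{-n})$ are not spurious and do not cancel against any $[k]_q$ powers: viewed as a Laurent polynomial in the independent variables $Q,D$, the determinant has irreducible factors $D-\zeta Q^{\pm 1}$ for \emph{every} $n$-th root of unity $\zeta$, not only $\zeta=1$. Concretely, for $n=3$ the circulant determinant equals $Q^3+Q^{-3}-D^3-D^{-3}$, which vanishes for $D=\omega Q$ with $\omega$ a primitive cube root of unity and $Q$ generic, even though $q^{2k}\ne 1$ and $q^kd^{\pm k}\ne 1$ there. So the hoped-for identity $\det=(\mathrm{unit})\cdot(q^{2k}-1)^{?}(q^kd^k-1)(q^kd^{-k}-1)$ is false, and no recounting of $[k]_q$ factors (nor testing $n=2,3$) will produce it.

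What your computation actually proves is that the matrix is nondegenerate if and only if $[k]_q\ne 0$ and neither $q^kd^{-k}$ nor $q^kd^{k}$ is an $n$-th root of unity, i.e. $q^{2k}\ne 1$ (for $q^2\ne 1$) and $(qd)^{nk}\ne 1$ and $(qd^{-1})^{nk}\ne 1$. This is slightly stronger than the condition printed in the lemma. The discrepancy is harmless for the paper, since the lemma is only invoked under the hypothesis that $q^2$, $qd$, $qd^{-1}$ are not roots of unity, in which case both versions yield nondegeneracy for all $k$; but your write-up should record the determinant formula and state the vanishing locus as it actually is, rather than attempt to force a cancellation that does not occur.
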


  Therefore if $q^2, dq,d^{-1}q$ are not roots of unity,
 we can choose a new basis  $\{\wt{H}_{i,-k}\}_{i\in [n]}$
 of the space $\mathrm{span}_\CC\{H_{0,-k},\ldots,H_{n-1,-k}\}$,
 such that $[H_{i,k},\wt{H}_{j,-l}]=\delta_{i,j}\delta_{k,l}H_0$ for any $i,j\in [n],\ k,l\in \NN$.
  In particular, the elements $\{H_{i,k},\wt{H}_{i,-k},H_0\}_{k>0}$
 form a Heisenberg Lie algebra $\h_i$ for any $i\in [n]$, and $\h_i$ commutes with $\h_j$ for any $i\ne j\in [n]$.

\begin{lem}\label{useful lemma}
  Let $\aaa$ be a Heisenberg Lie algebra with the basis $\{a_k\}_{k\in \ZZ}$ and the commutator relation $[a_k,a_l]=\delta_{k,-l}\lambda_ka_0$.
 Consider the Fock $\aaa$-representation $F:=\mathrm{Ind}_{\aaa_+}^{\aaa} \CC v_0$ with the central charge $a_0=1$
 and the degree operator $d\in \mathrm{End}(F)$ satisfying $[d,a_{k}]=ka_k$ and $d(v_0)=0$.
 Then:
  $$\tr_F \left\{\exp\left(\sum_{j=1}^\infty x_ja_{-j}\right)\cdot\exp\left(\sum_{j=1}^\infty y_ja_j\right)\cdot t^{-d}\right\}=
    \frac{1}{(t;t)_\infty} \cdot \exp\left(\sum_{j=1}^\infty\frac{x_jy_j\lambda_jt^j}{1-t^j}\right)\  \forall\ x_j,y_j\in \CC.$$
\end{lem}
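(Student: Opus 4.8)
The plan is to reduce the computation to the rank-one Heisenberg algebra by using the tensor factorization of the Fock module $F$, and then to evaluate the resulting one-variable graded trace by an elementary generating-function computation.

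\emph{Step 1 (factorization).} First I would decompose $\aaa$ as a direct sum of Lie subalgebras $\aaa=\bigoplus_{j\geq 1}\aaa_{(j)}$, where $\aaa_{(j)}:=\CC a_j\oplus\CC a_{-j}\oplus\CC a_0$ is a rank-one Heisenberg algebra with $[a_j,a_{-j}]=\lambda_j a_0$; indeed, generators with distinct positive (resp.\ negative) indices pairwise commute. Correspondingly $F\cong\widehat{\bigotimes}_{j\geq 1}F_{(j)}$ with $F_{(j)}:=\CC[a_{-j}]v_0$, the degree operator splits as $\mathrm{d}=\sum_{j\geq1}\mathrm{d}_{(j)}$ (with $\mathrm{d}_{(j)}$ acting on the $j$-th factor by the eigenvalue $-jm$ on $(a_{-j})^m v_0$), and since all the $a_{-j}$ (resp.\ all the $a_j$) commute,
$$\exp\Big(\sum_{j\geq 1}x_j a_{-j}\Big)\exp\Big(\sum_{j\geq 1}y_j a_j\Big)\,t^{-\mathrm{d}}=\bigotimes_{j\geq 1}\Big(\exp(x_j a_{-j})\exp(y_j a_j)\,t^{-\mathrm{d}_{(j)}}\Big).$$
Because $F_{(j)}$ contributes only to degrees in $j\ZZ_+$ and each graded component of $F$ is finite-dimensional, the graded trace is a well-defined element of $\CC[[t]]$ and factorizes as $\tr_F(\cdots)=\prod_{j\geq 1}\tr_{F_{(j)}}\big(\exp(x_j a_{-j})\exp(y_j a_j)\,t^{-\mathrm{d}_{(j)}}\big)$.

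\emph{Step 2 (rank-one trace).} Fix $j$ and put $s:=t^j$. I would identify $F_{(j)}$ with $\CC[u]$ via $u\mapsto a_{-j}v_0$, so that $a_{-j}$ becomes multiplication by $u$, $a_j$ becomes $\lambda_j\,\partial_u$ (from $[a_j,(a_{-j})^m]=\lambda_j m\,(a_{-j})^{m-1}$), and $t^{-\mathrm{d}_{(j)}}$ sends $u^m\mapsto s^m u^m$. Since $\exp(y_j a_j)=\exp(y_j\lambda_j\partial_u)$ is the shift $f(u)\mapsto f(u+y_j\lambda_j)$, the image of $u^m$ under $\exp(x_j a_{-j})\exp(y_j a_j)t^{-\mathrm{d}_{(j)}}$ equals $s^m e^{x_j u}(u+y_j\lambda_j)^m$, whose coefficient of $u^m$ is $s^m\sum_{p=0}^m\binom{m}{p}\frac{(x_jy_j\lambda_j)^p}{p!}$. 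Summing over $m\geq 0$, exchanging the order of summation, and using $\sum_{m\geq p}\binom{m}{p}s^m=s^p(1-s)^{-p-1}$ gives
$$\tr_{F_{(j)}}\big(\exp(x_j a_{-j})\exp(y_j a_j)\,t^{-\mathrm{d}_{(j)}}\big)=\frac{1}{1-s}\exp\!\Big(\frac{x_jy_j\lambda_j\,s}{1-s}\Big)=\frac{1}{1-t^j}\exp\!\Big(\frac{x_jy_j\lambda_j\,t^j}{1-t^j}\Big).$$

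\emph{Step 3 (assemble).} Taking the product over $j\geq 1$ and using $(t;t)_\infty=\prod_{j\geq 1}(1-t^j)$ yields exactly the claimed identity. The only genuinely delicate point is the legitimacy of the factorization in Step 1: one must note that the sign convention $[\mathrm{d},a_k]=ka_k$ forces $t^{-\mathrm{d}}$ to act on the degree-$jm$ part of $F_{(j)}$ by $t^{jm}$, so each local factor is a power series of the form $1+O(t^j)$, whence the infinite product converges coefficientwise in $t$ and only finitely many $j$ contribute to each power of $t$. (Alternatively one could avoid the factorization by normal-ordering $\exp(\sum x_j a_{-j})\exp(\sum y_j a_j)$ via Baker--Campbell--Hausdorff, but the tensor-product argument is cleaner.) The remaining manipulations are routine.
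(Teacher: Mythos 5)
Your proof is correct and follows essentially the same route as the paper's: both factor the graded trace over the modes $j$ and reduce to the elementary summation $\sum_{m\ge p}\binom{m}{p}s^m=s^p(1-s)^{-p-1}$. The only cosmetic difference is that you package the rank-one computation via the polynomial model $a_{-j}\mapsto u$, $a_j\mapsto\lambda_j\partial_u$ with the shift operator $\exp(y_j\lambda_j\partial_u)$, whereas the paper expands both exponentials directly and uses the diagonal matrix coefficients $\langle a_{-j}^l v_0|a_{-j}^k a_j^k|a_{-j}^l v_0\rangle=l(l-1)\cdots(l-k+1)\lambda_j^k$; the two bookkeeping schemes are interchangeable.
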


\begin{proof}
$\ $

  Applying the formula $\langle a_{-j}^lv_0|a_{-j}^k a_j^k| a_{-j}^lv_0 \rangle=l(l-1)\cdots (l-k+1)\lambda_j^k$, we get
\begin{multline*}
 \tr_F \left\{\exp\left(\sum_{j=1}^\infty x_ja_{-j}\right)\exp\left(\sum_{j=1}^\infty y_ja_j\right)\cdot t^{-d}\right\}=
 \sum_{k_1,k_2,\ldots\geq 0} \tr_F \left(\prod_{j=1}^\infty\frac{(x_jy_j)^{k_j}}{(k_j!)^2} a_{-j}^{k_j}a_j^{k_j}  \cdot t^{-d}\right)=\\
 \prod_{j=1}^\infty\left\{ \sum_{k_j=0}^\infty\sum_{l_j=k_j}^\infty\frac{(x_jy_j)^{k_j}}{(k_j!)^2}\cdot
                            \frac{l_j!\cdot \lambda_j^{k_j}}{(l_j-k_j)!}\cdot t^{jl_j}\right\}=
 \prod_{j=1}^\infty \left\{\sum_{k_j=0}^\infty \frac{(x_jy_j\lambda_jt^j)^{k_j}}{k_j!}\cdot \frac{1}{(1-t^j)^{k_j+1}}\right\}.
\end{multline*}
 The result follows.
\end{proof}

\begin{proof}[Proof of Theorem~\ref{main4}]
$\ $

 Reordering the factors of
  $\prod_{j=1}^N (f_0(z_{0,j})\cdots f_{n-1}(z_{n-1,j}))\cdot \prod_{i\in [n]} \prod_{j=1}^{k_i} \bar{\psi}_i^+(w_{i,j})
   \cdot \prod_{i\in [n]} \psi_{i,0}^{r_i}$
 in the normal order, we gain the product of factors from the
 first two lines of ($\diamond$).
  The $\bar{Q}\times \ZZ_+$-graded trace of the normally ordered
 product splits as $\tr_1\cdot \tr_2$, where
  $$\tr_1=\tr_{\CC\{\bar{Q}\}e^{\bar{\Lambda}_p}}
     \left( q^{\sum_{i\in [n]} r_i\partial_{\bar{\alpha}_i}}\cdot \prod_{i\in [n]}
     \prod_{j=1}^N z_{i,j}^{-H_{i,0}}\cdot \prod_{i=1}^{n-1} u_i^{d_i}\cdot (t/q^b)^{\mathrm{d}^{(2)}}\right),$$
  $$\tr_2= \tr_{F_n}
    \left( \exp\left(\sum_{i\in [n]}\sum_{k>0}u_{i,k}H_{i,-k}\right)\cdot
    \exp\left(\sum_{i\in [n]}\sum_{k>0}(v^{(1)}_{i,k}+v^{(2)}_{i,k})H_{i,k}\right)\cdot (t/q^b)^{\mathrm{d}^{(1)}} \right)$$
 with
  $$u_{i,k}:=\frac{-q^{k/2}}{[k]_q}\sum_{j=1}^N z_{i,j}^k,\ v^{(1)}_{i,k}:=\frac{q^{k/2}}{[k]_q}\sum_{j=1}^N z_{i,j}^{-k},\
    v^{(2)}_{i,k}:=(q-q^{-1})\sum_{j=1}^{k_i} w_{i,j}^{-k}$$
 and the operators $\mathrm{d}^{(1)}\in \mathrm{End}(F_n),\mathrm{d}^{(2)}\in \mathrm{End}(\CC\{\bar{Q}\}e^{\bar{\Lambda}_p})$ defined by
  $$\mathrm{d}^{(1)}(H_{i_1,-k_1}\cdots H_{i_l,-k_l}v_0)=\sum_{i=1}^l k_i\cdot H_{i_1,-k_1}\cdots H_{i_l,-k_l}v_0,\
    \mathrm{d}^{(2)}(e^{\bar{\beta}})=\frac{(\bar{\beta},\bar{\beta})-(\bar{\Lambda}_p,\bar{\Lambda}_p)}{2}\cdot e^{\bar{\beta}}.$$

  The computation of $\tr_1$ is straightforward, and we get exactly the expression from the third line of ($\diamond$).
 To evaluate $\tr_2$, we rewrite
   $\sum_{i\in [n]}\sum_{k>0}u_{i,k}H_{i,-k}=\sum_{i\in [n]}\sum_{k>0}\wt{u}_{i,k}\wt{H}_{i,-k}$
 with $\wt{H}_{i,-k}$ defined right after Lemma~\ref{change of basis} and
  $\wt{u}_{i,k}=\sum_{i'\in [n]} d^{-km_{i,i'}}\frac{[k]_q[ka_{i,i'}]_q}{k}u_{i',k}.$
 The commutativity of $\h_i$ and $\h_j$ for $i\ne j$ allows us to
 rewrite $\tr_2$ as a product of the corresponding traces over the
 $\h_i$-Fock modules.
 Applying Lemma~\ref{useful lemma}, we see (after routine computations)
 that $\tr_2$ is equal to the product of the factors from the last two lines in ($\diamond$).
\end{proof}


\subsection{Functionals via pairing}
$\ $

  Recall the Hopf algebra pairing $\varphi': \ddot{U}^{'\geq}\times \ddot{U}^{'\leq}\to \CC$ from Theorem~\ref{Drinfeld double sln}.
 As $\varphi'$ is nondegenerate, there exist unique elements $X^0_{p,\bar{c}}, X^{\bar{u}}_{p,\bar{c}}\in \ddot{U}^{'\geq,\wedge}$ and
 $X^{\bar{u},t}_{p,\bar{c}}\in \ddot{U}^{'\geq,\wedge}[[t]]$
 such that
   $$\phi^0_{p,\bar{c}}(X)=\varphi'(X^0_{p,\bar{c}}, X),\
     \phi^{\bar{u}}_{p,\bar{c}}(X)=\varphi'(X^{\bar{u}}_{p,\bar{c}},X),\
     \phi^{\bar{u},t}_{p,\bar{c}}(X)=\varphi'(X^{\bar{u},t}_{p,\bar{c}},X)\ \ \ \forall\ X\in \ddot{U}^{'\leq}.$$
  The goal of this section is to find these elements explicitly.

  We will actually compute these elements in the shuffle presentation.
  In order to do this, we first extend the isomorphism $\Psi$ from Theorem~\ref{Negut theorem}
  to the isomorphism
    $$\Psi^\geq:\ddot{U}^{'\geq} \iso S^{\geq}.$$
  Here $S^{\geq}$ is generated by $S$ and the formal generators
  $\psi_{i,k} (k<0),\psi_{i,0}^{\pm 1},\gamma^{\pm 1/2}, q^{\pm d_1}$
  with the defining relations compatible with those for $\ddot{U}^{'\geq}$.
   In particular, for $F\in S_{\overline{k},d}$ we have
   $$q^{d_1}Fq^{-d_1}=q^{-d}\cdot F.$$
  We define $\Gamma^0_{p,\bar{c}}, \Gamma^{\bar{u}}_{p,\bar{c}}, \Gamma^{\bar{u},t}_{p,\bar{c}}$ as
  the images of $X^0_{p,\bar{c}}, X^{\bar{u}}_{p,\bar{c}},X^{\bar{u},t}_{p,\bar{c}}$ under the isomorphism $\Psi^\geq$, respectively.
  Now we are ready to state the main result of this section:

\begin{thm}\label{main5}

 We have the following formulas:

\medskip
\noindent
 (a)  $\Gamma^0_{p,\bar{c}}=\sum_{N=0}^\infty (c_0\ldots c_{n-1})^{-N}\cdot \Gamma^0_{p;N}\cdot q^{\bar{\Lambda}_p}q^{-d_1}$
  with $\Gamma^0_{p;N}\in  S_{N\delta}$ given by
$$
 \Gamma^0_{p;N}=
 (1-q^{-2})^{nN}(-q^nd^{-n/2})^{N^2}\cdot
 \prod_{j=1}^N\frac{x_{0,j}}{x_{p,j}}\cdot
 \frac{\prod_{i\in [n]}\prod_{j\ne j'} (x_{i,j}-q^{-2}x_{i,j'})\cdot \prod_{i\in [n]}\prod_{j=1}^N x_{i,j}}
      {\prod_{i\in [n]}\prod_{j,j'} (x_{i,j}-x_{i+1,j'})}.
$$

\medskip
\noindent
 (b) $\Gamma^{\bar{u}}_{p,\bar{c}}=\sum_{N\geq 0} (c_0\ldots c_{n-1})^{-N}\cdot \Gamma^{\bar{u}}_{p;N}\cdot q^{-d_1}$
 with $\Gamma^{\bar{u}}_{p;N}\in  S_{N\delta}^\geq$ given by
\begin{multline*}
   \Gamma^{\bar{u}}_{p;N}=
   (1-q^{-2})^{nN}(-q^nd^{-n/2})^{N^2}\prod_{j=1}^p\frac{1}{u_1\ldots u_{j-1}}\times \\
   \frac{\prod_{i\in [n]}\prod_{j\ne j'} (x_{i,j}-q^{-2}x_{i,j'})}
        {\prod_{i\in [n]}\prod_{j,j'} (x_{i,j}-x_{i+1,j'})}\cdot
   (-1)^p [\mu^p]\left\{\prod_{i\in [n]} \left(\prod_{j=1}^N x_{i+1,j}-\mu u_1\ldots u_i\prod_{j=1}^N x_{i,j}\cdot
                                               q^{\bar{\Lambda}_{i+1}-\bar{\Lambda}_i}\right)\right\},
\end{multline*}
 where in the last product we take all $x_{i,j}$ to the left and all $q^{\bar{\Lambda}_{i+1}-\bar{\Lambda}_i}$ to the right.

\medskip
\noindent
 (c) $\Gamma^{\bar{u},t}_{p,\bar{c}}=\sum_{N\geq 0} (c_0\ldots c_{n-1})^{-N}\cdot \Gamma^{\bar{u},t}_{p;N}\cdot q^{\bar{\Lambda}_p}q^{-d_1}$ with
 $\Gamma^{\bar{u},t}_{p;N}\in  S_{N\delta}^\geq$ given by
\begin{multline*}
 \Gamma^{\bar{u},t}_{p;N}=
 (1-q^{-2})^{nN}(-q^nd^{-n/2})^{N^2}\times\\
 \frac{\prod_{i\in [n]}\prod_{j\ne j'} (x_{i,j}-q^{-2}x_{i,j'})\cdot \prod_{i\in [n]}\prod_{j=1}^N x_{i,j}}
      {\prod_{i\in [n]}\prod_{j,j'} (x_{i,j}-x_{i+1,j'})}\cdot
       \prod_{j=1}^N \frac{x_{0,j}}{x_{p,j}}\cdot  \theta(\vec{x},\wt{\Omega})\times   \\
 \frac{1}{(\bar{t};\bar{t})_\infty^n}\cdot \prod_{i\in [n]}\prod_{a,b=1}^N
   \frac{(\bar{t}\frac{x_{i,a}}{x_{i,b}};\bar{t})_\infty\cdot (\bar{t}q^2\frac{x_{i,a}}{x_{i,b}};\bar{t})_\infty}
        {(\bar{t}qd\frac{x_{i+1,a}}{x_{i,b}};\bar{t})_\infty\cdot (\bar{t}qd^{-1}\frac{x_{i-1,a}}{x_{i,b}};\bar{t})_\infty}
   \cdot \prod_{k>0}\prod_{i\in [n]}\prod_{a=1}^N   \bar{\psi}_i^-(\bar{t}^kq^{1/2}x_{i,a}),
\end{multline*}
 where $\bar{t}=t\gamma,\ \wt{\Omega}=\frac{1}{2\pi\sqrt{-1}}\cdot (a_{i,j}\ln(\bar{t}))_{i,j=1}^{n-1}$, and
  $$\vec{x}=(x_1,\ldots,x_{n-1})\ \mathrm{with}\
    x_i=\frac{1}{2\pi \sqrt{-1}}\ln\left(u_i^{-1}\bar{t}^{\delta_{p,i}}\psi_{i,0}\prod_{j=1}^N \frac{x_{i-1,j}x_{i+1,j}}{x_{i,j}^2}\right).$$
 In the above products, we take all $x_{i,j}$ to the left and all $\psi_{i,j}$ to the right.
\end{thm}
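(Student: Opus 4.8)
The plan is to compute each $X^{\ast}_{p,\bar c}$ from its defining property $\phi^{\ast}_{p,\bar c}(X)=\varphi'(X^{\ast}_{p,\bar c},X)$ by transporting everything to the shuffle side via the isomorphism $\Psi^{\geq}\colon\ddot U^{'\geq}\iso S^{\geq}$ and the dual isomorphism on $\ddot U^{'\leq}$, and then checking that the explicit formulas for $\Gamma^{\ast}_{p;N}$ produce exactly the functionals computed in Proposition~\ref{computation of top functional for sln}, Proposition~\ref{computation of partial trace functional for sln} and Theorem~\ref{main4}. First I would recall (from~\cite{N2}, and the shuffle description of $\varphi'$ that it entails) the explicit formula for the pairing $\varphi'$ in the shuffle realization: for $G\in S^{\geq}_{\bar k}$ and a product of $f$-currents of total degree $\bar k$, the pairing is given by a contour-integral / iterated-residue expression against $G$, with the Cartan part $\psi_{i,0}^{\pm1},\gamma^{\pm1/2},q^{\pm d_1}$ paired off by the rules (P1)--(P5). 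The point of passing to $S^{\geq}$ is that in this presentation $\varphi'(G,\prod f)$ becomes the constant term (in the appropriate variables) of $G$ times the reordering kernel, so the condition $\varphi'(\Gamma^{\ast}_{p;N}\cdot(\text{Cartan}),\ \prod_j(f_0(z_{0,j})\cdots f_{n-1}(z_{n-1,j}))\cdot(\text{Cartan}))=\phi^{\ast}_{p,\bar c;N,\dots}$ turns into an identity of rational functions (resp. formal series in $t$) in the $z$-variables.

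The key steps, in order: (1) fix the normalization — the prefactor $(1-q^{-2})^{nN}(-q^nd^{-n/2})^{N^2}$ is precisely the Jacobian-type constant relating the symmetrization defining $\star$ to the normal-ordering of the $f$-currents, and the factor $q^{-d_1}$ together with $q^{\bar\Lambda_p}$ encodes the action on the vacuum line $v_0\otimes e^{\bar\Lambda_p}$ via (T0.3) and $\rho_{p,\bar c}(q^{d_1})=q^{\mathrm d}$; (2) for part (a), match the numerator $\prod_{i,j\neq j'}(x_{i,j}-q^{-2}x_{i,j'})\cdot\prod_{i,j}x_{i,j}$ and denominator $\prod_{i,j,j'}(x_{i,j}-x_{i+1,j'})$ and the twist $\prod_j x_{0,j}/x_{p,j}$ against the normally-ordered two-point functions $\langle f_i(z)f_i(w)\rangle$, $\langle f_i(z)f_{i\pm1}(w)\rangle$ read off from Proposition~\ref{Saito representation}, using that the $\varphi'$-pairing kills all but the "diagonal" contraction against $v_0$; (3) for part (b), observe that $M(p)_n=\mathrm{Ker}(\mathrm d)$ carries the $U_q(\ssl_n)$-module $L_q(\bar\Lambda_p)$ (Lemma~\ref{zero level}), so the graded trace over $M(p)_n$ introduces the summation over $\bar\sigma$, which is exactly the combinatorics producing the coefficient extraction $[\mu^p]\{\prod_i(\prod_j x_{i+1,j}-\mu u_1\cdots u_i\prod_j x_{i,j}\cdot q^{\bar\Lambda_{i+1}-\bar\Lambda_i})\}$ — this is the same identity already verified at the level of functions in Proposition~\ref{computation of partial trace functional for sln}, now lifted to $S^{\geq}$ by replacing $q^{r_{i+1}-r_i}$ with the operator $q^{\bar\Lambda_{i+1}-\bar\Lambda_i}$; (4) for part (c), run the full trace computation of the proof of Theorem~\ref{main4} backwards: the $\tr_1$ factor over $\CC\{\bar Q\}e^{\bar\Lambda_p}$ becomes the theta function $\theta(\vec x,\wt\Omega)$ with $\vec x$ now built from the operators $\psi_{i,0}$ instead of scalars $q^{r_i}$, while the $\tr_2$ factor over the Fock module $F_n$, evaluated by Lemma~\ref{useful lemma}, becomes the infinite $(\bar t;\bar t)_\infty$-products together with the infinite product $\prod_{k>0}\prod_i\prod_a\bar\psi_i^-(\bar t^kq^{1/2}x_{i,a})$ — the latter is precisely how the $w$-dependence $\prod\bar\psi_i^+(w_{i,j})$ in $(\diamond)$ is "dualized" under $\varphi'$ into negative-mode $\psi^-$-factors living in $S^{\geq,\wedge}$, via (P1) $\varphi'(\psi_i^-(z),\psi_j^+(w))=g_{a_{i,j}}(d^{m_{i,j}}z/w)$.

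The main obstacle I expect is bookkeeping step (4), specifically the identification of the $w$-variable contributions in $(\diamond)$ with the operator product $\prod_{k>0}\prod_{i\in[n]}\prod_{a=1}^N\bar\psi_i^-(\bar t^kq^{1/2}x_{i,a})$: one must show that pairing $\bar\psi_i^-(\bar t^kq^{1/2}x_{i,a})\in S^{\geq,\wedge}$ against the insertions $\prod_{i,j}\bar\psi_i^+(w_{i,j})$ reproduces exactly the ratio of $(\,\cdot\,;T)_\infty$-factors on the last line of $(\diamond)$, and then that the remaining $z$-only infinite products combine with $\theta$ correctly after the substitution $T=t/q^b\leftrightarrow\bar t=t\gamma$ (note $q^{d_1}$ acts on $S_{N\delta,d}$ by $q^{-d}$, which is what converts the $q^b$ in $T$ into the central $\gamma$ in $\bar t$ once the Cartan element is moved past the degree-$N\delta$ shuffle element). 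This requires care with: the ordering convention "all $x_{i,j}$ to the left, all $\psi_{i,j}$ to the right"; convergence of the infinite products in the $t$-adic topology on $S^{\geq,\wedge}[[t]]$; and the precise form of the shuffle pairing formula, which itself rests on~\cite{N2} and on Theorem~\ref{Drinfeld double sln}. Once these are in place, parts (a) and (b) follow as the $t\to0$ (respectively $t\to 0$ after restricting to $M(p)_n$) degenerations of part (c), consistent with $\phi^0_{p,\bar c}$ and $\phi^{\bar u}_{p,\bar c}$ being limits of $\phi^{\bar u,t}_{p,\bar c}$, so the bulk of the work is concentrated in (c).
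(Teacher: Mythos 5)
Your proposal is correct and follows essentially the same route as the paper: the paper's proof consists precisely of combining the already-computed generating series $\phi^0_{p,\bar c;N,\dots}$, $\phi^{\bar u}_{p,\bar c;N,\dots}$, $\phi^{\bar u,t}_{p,\bar c;N,\bar k,\dots}$ (Propositions~\ref{computation of top functional for sln}, \ref{computation of partial trace functional for sln} and Theorem~\ref{main4}) with Lemma~\ref{pairing sln properties}, whose parts (a)--(c) are exactly the factorization of $\varphi$, the explicit Cartan/$\bar\psi^\pm$ pairing you invoke via (P1)--(P5), and the shuffle-integral formula for $\varphi(X,Y)$ that turns the matching into an identity of rational functions in the $z$-variables, with uniqueness supplied by nondegeneracy of $\varphi'$. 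Your identification of the $\prod_{k>0}\bar\psi_i^-(\bar t^k q^{1/2}x_{i,a})$ factors as the dualization of the $\bar\psi_i^+(w_{i,j})$ insertions, and of the $\gamma$ in $\bar t=t\gamma$ as what pairs against $q^{bd_1}$ to produce $T=t/q^b$, is exactly the content of Lemma~\ref{pairing sln properties}(b).
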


  The proof of this theorem follows by combining
 Proposition~\ref{computation of top functional for sln}, Proposition~\ref{computation of partial trace functional for sln}
 and Theorem~\ref{main4} with the following technical lemma:

\begin{lem}\label{pairing sln properties}
 (a) For any elements $a\in \ddot{U}^+, a'\in \ddot{U}^\geq \cap \ddot{U}^0, b\in \ddot{U}^-, b'\in \ddot{U}^\leq\cap \ddot{U}^0$, we have
       $$\varphi(aa', bb')=\varphi(a,b)\cdot \varphi(a',b').$$

\medskip
\noindent
 (b) For any $k_i,k'_i\in \ZZ_+$ and $A,B,C,A',B',C',a_i,b_i\in \ZZ$, we have
\begin{multline*}
 \varphi\left(\prod_{i\in [n]}\prod_{a=1}^{k_i} \bar{\psi}_i^-(z_{i,a})\prod_{i\in [n]}\psi_{i,0}^{a_i} \gamma^{A/2}q^{Bd_1}q^{Cd_2},
              \prod_{j\in [n]} \prod_{b=1}^{k'_j} \bar{\psi}_j^+(w_{j,b})\prod_{j\in [n]}\psi_{j,0}^{a'_j}\gamma^{A'/2}q^{B'd_1}q^{C'd_2} \right)=\\
  q^{-\frac{1}{2}A'B-\frac{1}{2}AB'+C'\sum a_i+C\sum a'_i+\sum_{i,j} a_ia'_ja_{i,j}}\cdot
  \prod_{i\in [n]}^{j\in [n]} \prod_{a=1}^{k_i} \prod_{b=1}^{k'_j} \frac{w_{j,b}-q^{a_{i,j}}d^{m_{i,j}}z_{i,a}}{w_{j,b}-q^{-a_{i,j}}d^{m_{i,j}}z_{i,a}}.
\end{multline*}

\medskip
\noindent
 (c) For $\bar{r}=(r_0,\ldots,r_{n-1}), \bar{s}=(s_0,\ldots,s_{n-1})\in \ZZ_+^{[n]}$ and
 elements $X\in \ddot{U}^+, Y\in \ddot{U}^-$ of the form
 $$X=e_{0,a^0_1}\cdots e_{0,a^0_{r_0}}\cdots e_{n-1,a^{n-1}_1}\cdots e_{n-1,a^{n-1}_{r_{n-1}}},\
   Y=f_{0,b^0_1}\cdots f_{0,b^0_{s_0}}\cdots f_{n-1,b^{n-1}_1}\cdots f_{n-1,b^{n-1}_{s_{n-1}}},$$
  the pairing $\varphi(X,Y)$ is expressed by an integral formula similar to~\cite[Proposition 3.10]{N2}:
 $$\varphi(X,Y)=\delta_{\bar{r},\bar{s}}
   \int \frac{(q-q^{-1})^{-\sum r_i} u_{0,1}^{b^{0}_1}\ldots u_{n-1,s_{n-1}}^{b^{n-1}_{s_{n-1}}} \Psi(X)(u_{0,1},\ldots,u_{n-1,r_{n-1}})}
   {\prod_i \prod_{j<j'}  \omega_{i,i}(u_{i,j}/u_{i,j'}) \cdot \prod_{i<i'}\prod_{j,j'} \omega_{i,i'}(u_{i,j}/u_{i',j'})}
   \prod_{i\in [n]}\prod_{j=1}^{s_i} \frac{du_{i,j}}{2\pi \sqrt{-1} u_{i,j}}.$$


\end{lem}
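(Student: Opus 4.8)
The three statements are of increasing difficulty, but all are formal consequences of the Hopf-pairing axioms of Section~1.3, the coproduct formulas (H1)--(H9), and the generating pairings (P1)--(P5). The observation to record at the outset is that $\varphi$ is $\ZZ^{[n]}\times \ZZ$-graded: by (P1)--(P5) the pairing of two homogeneous elements vanishes unless their degrees are opposite.

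For part~(a), the plan is to exploit the one-sided triangularity of the coproduct. Inspecting (H1), for a monomial $a$ in the $e_{i,k}$ the only summand of $\Delta(a)$ whose right tensor factor has $\ZZ^{[n]}$-degree $0$ is $a\otimes 1$; dually, by (H2), for a monomial $b$ in the $f_{i,k}$ the only summand of $\Delta(b)$ whose left tensor factor has degree $0$ is $1\otimes b$; and $\Delta$ preserves $\ddot{U}^0$ by (H3)--(H4). I would then apply $\varphi(xy,z)=\varphi(x,z_2)\varphi(y,z_1)$ to $\varphi(aa',bb')$ and $\varphi(x,yz)=\varphi(x_1,y)\varphi(x_2,z)$ to the inner pairings; the grading annihilates every term in which a positive-degree part of $a$ (resp.\ a negative-degree part of $b$) would have to pair with a Cartan element, and the counit axioms collapse the remainder to $\varphi(a,b)\varphi(a',b')$. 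This identity is exactly what licenses treating the ``$e$/$f$ part'' and the ``Cartan part'' of the pairing separately in (b) and (c).

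For part~(b), I would first compute the single pairing. Writing $\bar\psi_i^-(z)=\psi_{i,0}\psi_i^-(z)$ and $\bar\psi_j^+(w)=\psi_{j,0}^{-1}\psi_j^+(w)$ and using (P1) together with $g_m(0)=q^{-m}$, one finds
\[
  \varphi(\bar\psi_i^-(z),\bar\psi_j^+(w))=q^{a_{i,j}}\,g_{a_{i,j}}(d^{m_{i,j}}z/w)=\frac{w-q^{a_{i,j}}d^{m_{i,j}}z}{w-q^{-a_{i,j}}d^{m_{i,j}}z}.
\]
Since the $h_{i,r}$ pairwise commute and each $\bar\psi_i^\pm$ is group-like up to a $\gamma$-shift that is invisible to $\varphi$ by (P4), expanding $\bar\psi_i^\pm(z)=\exp(\pm(q-q^{-1})\sum_{r>0}h_{i,\pm r}z^{\mp r})$ and using the $\ZZ$-grading of $\varphi$ (which kills all but the bilinear term in the exponentials) turns the left-hand side of~(b) into the product over all pairs $(i,a),(j,b)$ of the single pairings above; this is the rational prefactor. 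The remaining scalar $q^{-\frac12 A'B-\frac12 AB'+C'\sum a_i+C\sum a_i'+\sum_{i,j} a_ia_j'a_{i,j}}$ is then the product of the pairings among the group-likes $\psi_{i,0}^{\pm1},\gamma^{\pm1/2},q^{\pm d_1},q^{\pm d_2}$ read off from (P3)--(P5) (the nonvanishing ones being $\varphi(\psi_{i,0},\psi_{j,0})=q^{a_{i,j}}$, $\varphi(\gamma^{1/2},q^{d_1})=\varphi(q^{d_1},\gamma^{1/2})=q^{-1/2}$ and $\varphi(\psi_{i,0},q^{d_2})=\varphi(q^{d_2},\psi_{i,0})=q$), while the cross pairings of a current against a group-like vanish by (P2) and (P4). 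Assembling the three groups of factors --- legitimate by part~(a) --- gives the claimed identity.

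Part~(c) is where the real work lies. First, the $\ZZ^{[n]}$-grading forces $\varphi(X,Y)=0$ unless $\bar r=\bar s$. For $\bar r=\bar s$ I would induct on $N=\sum_i r_i$ by peeling the rightmost generator $f_{i_0,b_0}$ off $Y$: from $\varphi(X,Y'f_{i_0,b_0})=\varphi(X_1,Y')\varphi(X_2,f_{i_0,b_0})$ and $\varphi(e_{j,k},f_{i_0,l})=\delta_{i_0,j}\delta_{k+l,0}/(q-q^{-1})$, only the summand of $\Delta(X)$ with $X_2=e_{i_0,-b_0}$ survives; by (H1) this selects a single branch of the coproduct, so that $X_1$ is the remaining product of $e$-currents with one factor $\psi_{i_0}^-(\gamma^{1/2}\bullet)$ inserted in place of the extracted $e_{i_0}$-current. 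Pairing that Cartan factor against the $\psi^+$-factors produced by $\Delta(Y')$ via (P1) reproduces precisely the $\omega$-weights missing from the integrand, while the $\delta$-function from $\varphi(e_{i_0,-b_0},f_{i_0,b_0})$, together with the symmetrisation built into $\Psi(X)$, converts the sum over insertion positions into a residue, i.e.\ one new contour integral in a variable $u_{i_0,j}$; the monomial $u_{i,j}^{b^i_j}$ from $Y$ and the powers of $(q-q^{-1})^{-1}$ are carried along. Iterating over all generators yields the asserted formula. Alternatively, and more efficiently, one may simply note that this is the $d$-deformation of~\cite[Proposition 3.10]{N2}, whose proof applies verbatim once $d$ is reinstated, since $d$ enters only through the functions $\omega_{i,i\pm1}$ and $g_{a_{i,j}}$. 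I expect the main obstacle to be making the identification ``the $\psi^-$-branch of $\Delta(X)$ produces exactly the missing $\omega$-weights'' precise --- equivalently, checking that peeling generators in this order is compatible with the shuffle normal ordering --- which is precisely the combinatorial point worked out in~\cite{N2}.
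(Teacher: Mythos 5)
The paper never actually proves this lemma: it is stated as a ``technical lemma'' with no argument supplied, and part~(c) is explicitly deferred to the analogue of~\cite[Proposition 3.10]{N2}. So there is no in-paper proof to compare against; what can be said is that your derivation is correct and is the standard one that the authors evidently had in mind. Your part~(a) is the usual triangularity argument: the $\ZZ^{[n]}$-grading of $\varphi$ (visible from (P1)--(P5)) plus the one-sided shape of (H1)--(H2) kills all cross terms, and the counit axiom collapses what remains. Your part~(b) computations check out: $\varphi(\bar{\psi}_i^-(z),\bar{\psi}_j^+(w))=q^{a_{i,j}}g_{a_{i,j}}(d^{m_{i,j}}z/w)=\frac{w-q^{a_{i,j}}d^{m_{i,j}}z}{w-q^{-a_{i,j}}d^{m_{i,j}}z}$ is exactly right (the three extra factors $\varphi(\psi_{i,0},\psi_{j,0}^{-1})\varphi(\psi_{i,0},\psi_j^+(w))\varphi(\psi_i^-(z),\psi_{j,0}^{-1})=q^{-a_{i,j}}\cdot q^{a_{i,j}}\cdot q^{a_{i,j}}$ produce the stated normalization), and the group-like pairings you list reproduce the exponent $-\tfrac12 A'B-\tfrac12 AB'+C'\sum a_i+C\sum a_i'+\sum a_ia_j'a_{i,j}$. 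For part~(c) your peeling/induction sketch is the right mechanism, and you correctly identify the only nontrivial point (that the $\psi^-$-branch of the coproduct generates precisely the $\omega$-weights in the denominator of the integrand); since the paper itself only asserts the formula is ``similar to'' Negut's and leans on his proof, deferring that combinatorial verification to~\cite{N2} is consistent with the intended level of detail. No gaps beyond what the paper itself leaves implicit.
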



\subsection{Bethe incarnation of $\A(\overline{s})$}
$\ $

   Recalling the notion of a transfer matrix from Section~\ref{section_bethe}, it is easy to see that
  $$X^{\bar{u},t}_{p,\bar{c}}=T_{\rho_{p,\bar{c}}}(u_1^{-\bar{\Lambda}_1}\ldots u_{n-1}^{-\bar{\Lambda}_{n-1}} t^{-d_1})
    \cdot \prod_{j=1}^{n-1} u_j^{\langle\bar{\Lambda}_j,\bar{\Lambda}_p\rangle},$$
  which provides a more elegant definition of $X^{\bar{u},t}_{p,\bar{c}}$.
  Moreover, the elements $X^{\bar{u}}_{p,\bar{c}}$ can be thought of as
  certain \emph{truncations} of $X^{\bar{u},t}_{p,\bar{c}}$ obtained
  by setting $t\to 0$, while $X^{0}_{p,\bar{c}}$ are obtained by
  setting further $u_1,\ldots,u_{n-1}\to 0$.

   The commutativity of the Bethe subalgebras
  implies the commutativity of $\{\Gamma^{\bar{u},t}_{p,\bar{c}}|p,\bar{c}\}$ and
  hence of $\{\Gamma^{\bar{u},t}_{p;N}\}_{0\leq p\leq n-1}^{N\geq 1}$.
  As a result, we get the commutativity of the families
  $\{\Gamma^0_{p;N}\}_{0\leq p\leq n-1}^{N\geq 1}$ and $\{\Gamma^{\bar{u}}_{p;N}\}_{0\leq p\leq n-1}^{N\geq 1}$.
   Due to Theorem~\ref{main5}(b), the
  elements $\Gamma^{\bar{u}}_{p;N}$ have the same form as the generators of the subalgebra $\A(s_0,\ldots,s_{n-1})$
  from Section 2 with $s_i\in \CC^*\cdot e^{\bar{P}}$ given by
     $$s_i:=u_i\cdot q^{\bar{\Lambda}_{i+1}-2\bar{\Lambda}_i+\bar{\Lambda}_{i-1}}\ \mathrm{for\ all}\ i\in [n],\ \mathrm{where}\ u_0:=1/(u_1\ldots u_{n-1}).$$
   Since $e^h\ (h\in \bar{P})$ commute with $\oplus_k S_{k\delta}$, we see that those $\{s_i\}$ can be treated as
  formal parameters with $s_0\ldots s_{n-1}=1$ and $\{s_i\}$
  being generic for any choice of $\{u_i\}$.

   Finally, let us notice that while $\ddot{U}_{q,d}(\ssl_n)$
  contained the horizontal copy of $U_q(\widehat{\gl}_n)$, the
  algebra $\ddot{U}^{'}_{q,d}(\ssl_n)$ contains a horizontal copy of
  $U_q(L\gl_n)$ (that is no $q^{\pm d_2}$ and with trivial central charge $c'=0$).
   The subspace $M(p)_n$ is $U_q(L\gl_n)$-invariant and is just the $p$th fundamental representation.
  By standard results, $U_q(L\gl_n)$ admits a double construction similar to the one for $\ddot{U}^{'}_{q,d}(\ssl_n)$.
   Combining all the previous discussions with the construction
  of the universal $R$-matrices for $\ddot{U}^{'}_{q,d}(\ssl_n)$ and $U_q(L\gl_n)$, we get the following result:

\begin{thm}\label{main6}
  The Bethe subalgebra of $U_q(L\gl_n)$, corresponding to the group-like element
 $x=u_1^{-\bar{\Lambda}_1}\ldots u_{n-1}^{-\bar{\Lambda}_{n-1}}$ and
 the category of finite-dimensional $U_q(L\gl_n)$-representations,
 can be identified with
  $\A(\{u_i\cdot q^{\bar{\Lambda}_{i+1}-2\bar{\Lambda}_i+\bar{\Lambda}_{i-1}}\}_{i\in [n]})$, where $u_0:=1/(u_1\ldots u_{n-1})$.
\end{thm}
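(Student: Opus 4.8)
The plan is to present the Bethe subalgebra of $U_q(L\gl_n)$ as generated by finitely many transfer matrices, to identify those with the elements $\Gamma^{\bar{u}}_{p;N}$ of Theorem~\ref{main5}(b), and then to invoke Theorems~\ref{main1} and~\ref{main5}. Equip $U_q(L\gl_n)$ with its Drinfeld-double structure and universal $R$-matrix (built exactly as in Theorem~\ref{Drinfeld double sln} and the $R$-matrix theorem of Section 1.3), and take $x=u_1^{-\bar{\Lambda}_1}\cdots u_{n-1}^{-\bar{\Lambda}_{n-1}}$. By Section~\ref{section_bethe}, the Bethe subalgebra $\mathcal{B}(x)$ is the image of the ring homomorphism $[V]\mapsto T_V(x)$ from the Grothendieck ring of finite-dimensional $U_q(L\gl_n)$-modules into the completion $\widehat{U}_q(L\gl_n)$. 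By standard results on quantum loop algebras this ring is generated, as a ring, by the classes of the fundamental modules --- which, by the quoted fact, are precisely the $M(p)_n$, $0\le p\le n-1$ (the $p=0$ entry being the one-dimensional determinant module, present because we work with $\gl_n$ rather than $\ssl_n$, and it is what produces the extra $\gl_1$-family of generators) --- together with their spectral-parameter twists; but under $\Psi^{\mathrm{h}}$ a spectral twist of a module is a global rescaling of the shuffle variables, while the components of $T_V(x)$ landing in $S$ that concern us lie in $S_{\bullet,0}$, on which that rescaling is trivial. Hence it suffices to understand $T_{M(0)_n}(x),\dots,T_{M(n-1)_n}(x)$.

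Next I would identify $T_{M(p)_n}(x)$ with the toroidal transfer matrix $X^{\bar{u}}_{p,\bar{c}}$ of Section 3.3. Recall from Section 3.5 that $X^{\bar{u},t}_{p,\bar{c}}=T_{\rho_{p,\bar{c}}}(u_1^{-\bar{\Lambda}_1}\cdots u_{n-1}^{-\bar{\Lambda}_{n-1}}t^{-d_1})\cdot\prod_j u_j^{\langle\bar{\Lambda}_j,\bar{\Lambda}_p\rangle}$ and that $X^{\bar{u}}_{p,\bar{c}}$ is its $t\to 0$ truncation. Since $\rho_{p,\bar{c}}(q^{d_1})$ acts as $q^{\mathrm{d}}$, the factor $t^{-d_1}$ weights the $\mathrm{d}$-graded piece of $W(p)_n$ of degree $-m$ by $t^m$, so the $t\to 0$ limit keeps exactly the top level $M(p)_n=\Ker(\mathrm{d})$, which by the quoted fact carries the $p$th fundamental $U_q(L\gl_n)$-module structure. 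It then remains to see that this truncation coincides with the $U_q(L\gl_n)$-transfer matrix over $M(p)_n$, i.e.\ that the universal $R$-matrix of $\ddot{U}^{'}_{q,d}(\ssl_n)$ restricts, along the horizontal embedding $U_q(L\gl_n)\hookrightarrow\ddot{U}^{'}_{q,d}(\ssl_n)$, to that of $U_q(L\gl_n)$ once one traces over the top level; one also knows that, modulo the explicit group-like factor $q^{\bar{\Lambda}_p}q^{-d_1}\prod_j u_j^{\langle\bar{\Lambda}_j,\bar{\Lambda}_p\rangle}$ and the $(c_0\cdots c_{n-1})^{-N}$ normalization, $\Psi^{\geq}(X^{\bar{u}}_{p,\bar{c}})$ equals $\sum_{N\ge 0}\Gamma^{\bar{u}}_{p;N}q^{\bar{\Lambda}_p}q^{-d_1}$ with $\Gamma^{\bar{u}}_{p;N}$ as in Theorem~\ref{main5}(b). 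Thus $\mathcal{B}(x)$ corresponds, after this identification, to the subalgebra of $S$ (extended by the formal $e^{\bar{P}}$-factors) generated by $\{\Gamma^{\bar{u}}_{p;N}\}_{0\le p\le n-1}^{N\ge 1}$.

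Finally I would match this with $\A(\overline{s})$. Put $s_i:=u_i\,q^{\bar{\Lambda}_{i+1}-2\bar{\Lambda}_i+\bar{\Lambda}_{i-1}}$ with $u_0:=1/(u_1\cdots u_{n-1})$; then $\prod_{i\in[n]}s_i=1$ (the $q$-exponents sum to $0$), and, since the $e^{\bar{\Lambda}_j}$ commute with $\oplus_k S_{k\delta}$ and are $\ZZ$-independent of $q^{\ZZ}d^{\ZZ}$, the tuple $\{s_i\}$ is generic in the sense of Section 2.2 for every choice of $\{u_i\}$. Comparing Theorem~\ref{main5}(b) with the definition of $F^{\mu}_k(\overline{s})$ in Lemma~\ref{lemma 0}, one checks that for each fixed $N$ the linear span of $\{\Gamma^{\bar{u}}_{p;N}\}_{p=0}^{n-1}$ equals that of $\{F^{\mu_l}_N(\overline{s})\}_{l=1}^{n}$ for any $n$ pairwise distinct $\mu_l$: after a common rescaling of $\mu$ both are the $n$-dimensional space spanned by the coefficients of $\mu^0,\dots,\mu^{n-1}$ in $\prod_{i\in[n]}(s_0\cdots s_i\prod_j x_{i,j}-\mu\prod_j x_{i+1,j})$ (whose $\mu^0$- and $\mu^n$-coefficients are proportional), the $p=0$ family being, via the Corollary after Theorem~\ref{main3}, the generators $F_1,F_2,\dots$ of $\dot{U}^{\mathrm{h}}(\gl_1)^+$. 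Hence the subalgebra generated by the $\Gamma^{\bar{u}}_{p;N}$ coincides with the one generated by all $F^{\mu}_k(\overline{s})$, which by Theorem~\ref{main1} is precisely $\A(\overline{s})$; both inclusions ($\A(\overline{s})\subseteq\mathcal{B}(x)$ since $\A(\overline{s})$ is generated by elements of $\mathcal{B}(x)$, and $\mathcal{B}(x)\subseteq\A(\overline{s})$ since the generators $T_{M(p)_n}(x)$ of $\mathcal{B}(x)$ lie in $\A(\overline{s})$) follow, and the quasitriangularity-induced commutativity of $\mathcal{B}(x)$ re-derives that of $\A(\overline{s})$. The main obstacle is the step in the second paragraph: establishing that the truncated toroidal transfer matrix $X^{\bar{u}}_{p,\bar{c}}$ is the $U_q(L\gl_n)$-transfer matrix over $M(p)_n$ --- matching the universal $R$-matrices of $\ddot{U}^{'}_{q,d}(\ssl_n)$ and of its horizontal $U_q(L\gl_n)$ after tracing over the top level; everything else is bookkeeping on top of the results already established.
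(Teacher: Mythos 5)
Your proposal follows essentially the same route as the paper: interpret $X^{\bar{u},t}_{p,\bar{c}}$ as the toroidal transfer matrix over $\rho_{p,\bar{c}}$, truncate at $t\to 0$ to land on the top level $M(p)_n$ (the $p$th fundamental $U_q(L\gl_n)$-module), and match the resulting $\Gamma^{\bar{u}}_{p;N}$ with the generators $F^{\mu}_N(\overline{s})$ of $\A(\overline{s})$ via Theorems~\ref{main5}(b) and~\ref{main1}. The step you flag as the main obstacle --- compatibility of the universal $R$-matrix of $\ddot{U}^{'}_{q,d}(\ssl_n)$ with that of the horizontally embedded $U_q(L\gl_n)$ after tracing over the top level --- is exactly the point the paper itself leaves to ``combining the previous discussions,'' so your write-up is, if anything, more explicit about what remains to be checked.
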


\begin{rem}
 (a) The commutativity of $\{\Gamma^0_{p;N}\}_{0\leq p\leq n-1}^{N\geq 1}$ implies that the family
      $$\left\{\prod_{j=1}^N\frac{x_{0,j}}{x_{p,j}}\cdot \frac{\prod_{i\in [n]}\prod_{j\ne j'} (x_{i,j}-q^{-2}x_{i,j'})\cdot \prod_{i\in [n]}\prod_{j=1}^N x_{i,j}}
        {\prod_{i\in [n]}\prod_{j,j'} (x_{i,j}-x_{i+1,j'})}\right\}_{0\leq p\leq n-1}^{N\geq 1}$$
     of elements from $S$ is commutative. It is easy to see that the
     subalgebra they generate is the limit algebra of
     $\A(s_0,s_1,\ldots,s_{n-1})$ as $s_1,\ldots,s_{n-1}\to 0,\ s_0=1/(s_1\ldots s_{n-1}),\ \mathrm{and}\ \{s_i\}\ \mathrm{stay\ generic}$.

\noindent
 (b)  The commutative algebras generated by $\{\Gamma^{\bar{u},t}_{p;N}\}_{0\leq p\leq n-1}^{N\geq 1}$
     can be viewed as one-parameter deformations of the algebras $\A(\bar{s})$.
     They play a crucial role in the Bethe ansatz for $\ddot{U}_{q,d}(\ssl_n)$.
\end{rem}


\section{Generalizations to $n=1$ and $n=2$}

  It turns out that all the previous results of this paper can be
 actually generalized to the $n=1,2$ cases.
  The goal of this last section is to explain the required slight
 modifications.

\subsection{$n=1$ case}
$\ $

  The quantum toroidal algebra $\ddot{U}_{q,d}(\gl_1)$ has been extensively studied in the last few years.
 Roughly speaking, one just needs to modify the quadratic relations
 from Section 1.1 by replacing
  $$g_{a_{i,j}}(t)\rightsquigarrow \frac{(q_1t-1)(q_2t-1)(q_3t-1)}{(t-q_1)(t-q_2)(t-q_3)},\ \mathrm{where}\ q_1:=q^2,\ q_2:=q^{-1}d,\ q_3:=q^{-1}d^{-1},$$
 and by replacing the Serre relations (T7.1, T7.2) by
  $$\underset{z_1,z_2,z_3}\Sym\ \frac{z_2}{z_3}\cdot[e(z_1),[e(z_2),e(z_3)]]=0,$$
  $$\underset{z_1,z_2,z_3}\Sym\ \frac{z_2}{z_3}\cdot[f(z_1),[f(z_2),f(z_3)]]=0.$$
 Analogously to the $n\geq 3$ case, the map $e_i\mapsto x^i$ extends to the isomorphism $\ddot{U}_{q,d}(\gl_1)^+\iso S^{\mathrm{sm}}$.

  The results of Section 2 recover the same commutative algebra $\A^{\mathrm{sm}}$ we started from.
 On the other hand, we can apply the constructions of Section 3 to the
 \emph{Fock} $\ddot{U}^{'}_{q,d}(\gl_1)$-representations $\{F_c\}_{c\in \CC^*}$ (introduced in~\cite[Proposition A.6]{FS}).
 As a result, we will get:

\noindent
 $\circ$ The elements $\Gamma^0_c$ (corresponding to the top matrix coefficient functional $\phi^0_c$)
 are given by

  $\Gamma^0_c=\sum_{N=0}^{\infty} c^{-N}q^{-N(N-1)}\cdot K_N(x_1,\ldots,x_N)\cdot  q^{-d_1}.$

\noindent
 $\circ$ The elements $\Gamma^t_c$ (corresponding to the full graded trace functional $\phi^t_c$)
 are given by
  $$\Gamma^t_c=\sum_{N=0}^\infty \frac{c^{-N}q^{-N(N-1)}}{(\bar{t};\bar{t})_\infty}\cdot
    K_N\cdot \prod_{a,b=1}^N
    \frac{(\bar{t}\frac{x_a}{x_b};\bar{t})_\infty \cdot (\bar{t}q^2\frac{x_a}{x_b};\bar{t})_\infty}
         {(\bar{t}qd^{-1}\frac{x_a}{x_b};\bar{t})_\infty \cdot (\bar{t}qd\frac{x_a}{x_b};\bar{t})_\infty}
    \cdot \prod_{k>0}\prod_{a=1}^N \bar{\psi}^-(\bar{t}^kq^{1/2}x_a)\cdot  q^{-d_1}.$$

\subsection{$n=2$ case}
$\ $

  For $n=2$, we need first to redefine both the quantum toroidal and the shuffle algebras.

\noindent
 $\circ$ \emph{Quantum toroidal algebra of $\ssl_2$}.

  One needs to slightly modify the defining relations (T0.1--T7.2) of $\ddot{U}_{q,d}(\ssl_2)$ (see~\cite{FJMM1}).
 The function $g_{a_{i,j}}(z)$ from the relations (T1, T2, T3, T5, T6) should be changed as follows:
  $$g_{a_{i,i}}(z)\rightsquigarrow \frac{q^2z-1}{z-q^2},\ g_{a_{i,i+1}}(z) \rightsquigarrow \frac{(dz-q)(d^{-1}z-q)}{(qz-d)(qz-d^{-1})},$$
 while the cubic Serre relations (T7.1, T7.2) should be replaced with quartic Serre relations
  $$\underset{z_1,z_2,z_3}\Sym\  [e_i(z_1),[e_i(z_2),[e_i(z_3),e_{i+1}(w)]_{q^2}]]_{q^{-2}}=0,$$
  $$\underset{z_1,z_2,z_3}\Sym\ [f_i(z_1),[f_i(z_2),[f_i(z_3),f_{i+1}(w)]_{q^2}]]_{q^{-2}}=0.$$

\noindent
 $\circ$ \emph{Big shuffle algebra of type $A_1^{(1)}$}.

 One needs to modify the matrix $\Omega$ used to define the $\star$ product as follows:
  $$\omega_{i,i}(z)=\frac{z-q^{-2}}{z-1},\ \omega_{i,i+1}(z)=\frac{(z-qd)(z-qd^{-1})}{(z-1)^2}.$$

\noindent
 $\circ$ \emph{Vertex representations $\rho_{p,\bar{c}}$}.

  Finally, we need to slightly modify the formulas of $\rho_{p,\bar{c}}$ from Proposition~\ref{Saito representation}:

 (i) We redefine the commutator relations of the Heisenberg algebra $S_n$ as follows:
     $$[H_{i,k}, H_{i,l}]=\frac{[k]_q\cdot [2k]_q}{k}\delta_{k,-l}\cdot H_0,\
       [H_{i,k}, H_{i+1,l}]=-(d^k+d^{-k})\frac{[k]_q\cdot [k]_q}{k}\delta_{k,-l}\cdot H_0.$$

 (ii) We also redefine the operator $z^{H_{i,0}}$ via
     $$z^{H_{i,0}}(v\otimes e^{\bar{\beta}}):=
       z^{\langle \bar{h}_i,\bar{\beta} \rangle} v\otimes e^{\bar{\beta}}.$$

\noindent
 Once the above modifications are made, all the results from Sections 2 and 3 still hold.


\end{document}